\documentclass[11pt,oneside,reqno]{amsart}
\usepackage{mathrsfs}
\usepackage{amstext}
\usepackage{amsthm}
\usepackage{amssymb}
\usepackage{geometry}
\usepackage{setspace}
\usepackage%[bookmarks=false,
 {hyperref}

\makeatletter
\numberwithin{equation}{section}
\numberwithin{figure}{section}
\theoremstyle{plain}
\newtheorem{thm}{\protect\theoremname}[section]
\theoremstyle{plain}
\newtheorem*{claim*}{Claim}
\theoremstyle{remark}
\newtheorem{rem}[thm]{\protect\remarkname}
\theoremstyle{definition}
\newtheorem{defn}[thm]{\protect\definitionname}
\theoremstyle{plain}
\newtheorem{lem}[thm]{\protect\lemmaname}
\theoremstyle{plain}
\newtheorem{cor}[thm]{\protect\corollaryname}
\theoremstyle{remark}
\newtheorem*{acknowledgement*}{\protect\acknowledgementname}
\theoremstyle{definition}
\newtheorem{notation}[thm]{\protect\notationname}

\usepackage{amsthm}
\usepackage{mathrsfs}
\usepackage[noadjust]{cite}
\usepackage{enumitem}
	\setlist[itemize]{leftmargin=*}
	\setlist[enumerate]{leftmargin=*}
\usepackage{tikz}
\usetikzlibrary[patterns]
\usepackage{bbm}
\usepackage{stmaryrd}

\makeatother

\providecommand{\corollaryname}{Corollary}
\providecommand{\definitionname}{Definition}
\providecommand{\lemmaname}{Lemma}
\providecommand{\remarkname}{Remark}
\providecommand{\theoremname}{Theorem}
\providecommand{\acknowledgementname}{Acknowledgement}
\providecommand{\notationname}{Notation}

\newcommand{\T}{\mathbb{T}}
\newcommand{\Prob}{\mathbb{P}}
\newcommand{\Q}{\mathbb{Q}}
\newcommand{\Z}{\mathbb{Z}}

\begin{document}
\title[Sharp freezing time estimates for the subcritical FEP]{Sharp freezing time estimates for the subcritical Facilitated Exclusion Process}
\author{Oriane Blondel, Cl\'ement Erignoux, Seonwoo Kim and Sanha Lee}
\address{Oriane Blondel. Université Paris Cité and Sorbonne Université, CNRS, Laboratoire de Probabilités, Statistique et Modélisation, F-75013 Paris, France.}
\email{blondel@lpsm.paris}
\address{Cl\'ement Erignoux. Inria, Universite Claude Bernard Lyon 1, CNRS UMR5208, Institut Camille Jordan, F-69603 Villeurbanne, France.}
\email{clement.erignoux@inria.fr}
\address{Seonwoo Kim. Yonsei University, 50 Yonsei-ro, Seodaemun-gu, Seoul 03722, South Korea.}
\email{seonwookim@yonsei.ac.kr}
\address{Sanha Lee. Seoul National University, 1 Gwanak-ro, Gwanak-gu, Seoul 08826, South Korea.}
\email{sanha7139@snu.ac.kr}
\begin{abstract}
We investigate the exact transience time of the Facilitated Exclusion Process (FEP) on the one-dimensional torus with $N$ sites. The FEP exhibits an active/inactive phase transition at critical density $1/2$, such that in the subcritical density regime $(0,1/2)$, it becomes frozen after a finite time period---the transience time or freezing time. We first show that for the FEP starting from a Bernoulli product measure of marginal density $\rho \in (0,1/2)$, the transience time has exactly the scale of $\Theta(\log^3 N)$. Secondly, we prove that in the near-critical case $\rho \simeq 1/2 - N^{-\alpha}$ for $\alpha \in (0,1)$, the transience time is polynomial and has a scale of $N^{1 \wedge (2\alpha)}$. The key idea is to estimate the typical size of locally supercritical intervals of the initial distribution, which has order $\log N$ in the subcritical case and $N^{1 \wedge (2\alpha)}$ in the near-critical case. In the subcritical case this is enough, whereas in the near-critical case we need additional dynamical decorrelation inequalities to apply this static result to estimate the freezing time.
\end{abstract}

\maketitle
\tableofcontents{}

\section{\label{sec1}Introduction}

%\cosw{SW: What do you think of the title/abstract? Please modify as you wish if unsatisfied!}

%In the last twenty years, significant effort has been invested by the mathematics community into modelling and understanding the behavior of multi-phase materials. In this context, lattice gases have been heavily scrutinized, as they can model a wide variety of physical behavior, and in many cases are open to exact mathematical characterization. Constrained lattice gases, in particular, have been useful tools to model phase separation, as they exhibit radically different macroscopic behavior depending on whether the kinetic constraint is typically satisfied at the local level. Refer to the monograph \cite{HT25} for an intensive mathematical/physical review on this general topic and a full list of references.
%{\color{cyan}O: I don't understand what papers/results the two previous sentences allude to (starting at "open to exact math..."), we should have more precise statements as well as citations.} \cosw{SW: maybe Hartarsky--Toninelli helps?}

In this article, we are interested in the Facilitated Exclusion Process (FEP), which has been heavily studied in recent years \cite{BBCS18,BESS20,BES21,mappingESZ, fluctuationsFEP,GLS, WAFEP-BBS, DCES}, as a toy model with absorbing phase transition and phase separation \cite{RPSV}.
%\cosw{SW: what is phase separation?}
The macroscopic behavior of the model depends on the local density $\rho$: in the supercritical phase $\rho > 1/2$ it will be typically diffusive, while it will be frozen in the subcritical phase $\rho\leq 1/2$. At the microscopic level, phase separation only occurs after a transience time, before which active and frozen particles clusters coexist (see Section \ref{sec2.2} for a more rigorous formulation). {Understanding this transient behavior is key in the study of the FEP dynamics.}

The transient time of the FEP has been previously studied in several cases. Because of the kinetic constraint, particle trajectories in the FEP are heavily correlated, and because the FEP itself is not attractive, understanding its transient behavior is by no means trivial. In \cite{BESS20}, a  polylogarithmic bound $\mathcal O(\log^{32} N)$ is obtained, starting from a supercritical Bernoulli product measure on the ring. This is instrumental in deriving the FEP's supercritical hydrodynamic limit. Using similar techniques, \cite{BES21} estimates the freezing time in the subcritical phase, as well as the time to create well defined interfaces in a more general situation, e.g. starting from product measures fitting a suitable density profile. Both of these works use heavily a mapping of the FEP to an attractive zero-range process. One significant upside of the latter is that it can be constructed by sampling particle trajectories \emph{a priori} (as simple random walks), before adjusting the time between jumps in order to fit the zero-range interactions. The main drawback in this method is that the time change induced by the inter-particles interactions is not tightly controlled, and sharp time estimates are hard to obtain.

In \cite{EM24}, this issue is solved by mapping to another attractive process called SSEP with traps (SWT), in which individual particles jump at rate $1$. This mapping is explained in full detail in Appendix \ref{secA}. This idea allows to obtain sharp estimates on the transience time, starting from any configuration. The cutoff phenomenon can then be proved: the transience probability on the ring decays sharply in a vanishing window around time $t_N^\star:=N^2\log N/4\pi^2$, extending results from \cite{AC25}.

Our main results are two-fold.
First, we revisit the estimation of the transience time starting from a product measure, in order to obtain sharp estimates in the subcritical phase. More precisely, we show that the transience time starting from a Bernoulli product measure with parameter $\rho<1/2$ is indeed polylogarithmic, of exact order $C(\rho)\log^3 N$. See Theorem \ref{thm:main} for details.
%We also explore the divergence of the constant $C(\rho)$ as we approach criticality ($\rho\nearrow 1/2$).
The proof relies on the mapping to the SWT supplemented by the construction of non-interacting $\log N$-sized boxes whose boundaries  will never be crossed by particles due to the overwhelming presence of traps (corresponding to empty sites in the FEP).

Second, we consider the near-criticality case (also studied in \cite{GLS}, with a focus on the variance of the number of particles in large boxes), when the number of particles $K$ is roughly of order $N/2 - N^{1-\alpha}$ (cf. \eqref{eq:Kalpha}), such that the global density becomes $\rho \simeq 1/2 - N^{-\alpha}$ as $N \to \infty$. Then, we identify a certain \emph{dynamical phase transition}
%{\color{cyan}O: Why do we call this phase separation? Dynamical transition would make more sense to me, but maybe there is something even better?}
at $\alpha^\star := 1/2$ in the following sense. If $\alpha$ is big (i.e. $\alpha \ge 1/2$), then the transience time scales diffusively as $N^2$, whereas if $\alpha$ is small (i.e. $\alpha < 1/2$), then the transience time has a scale $N^{4\alpha}$, which is strictly smaller than $N^2$. We refer the reader to Theorem \ref{thm:main2} for the exact mathematical formulation.

The emergence of dynamical phase transition
%{\color{cyan}O: Same remark.}
in the near-criticality case comes from the competition between the global subcriticality and the local CLT fluctuation of the density. More precisely, if the subcriticality $N^{-\alpha}$ is small enough, then the local CLT fluctuation overcomes this subcriticality and allows macroscopic $\Theta(N)$-size intervals to be supercritical, so that they take a diffusive time scale $N^2$ to freeze. On the other hand, if $N^{-\alpha}$ is large, then the CLT fluctuation can only make $\Theta(N^{2\alpha})$-size intervals to be initially supercritical, thus the corresponding freezing time becomes roughly close to $N^{4\alpha}$. See Remark \ref{rem:phase-sep} for more discussion on this matter.

The rest of the article is organized as follows.
In Section \ref{sec2} we define the FEP, recall a few basic facts about its transient/frozen/ergodic states, and state our main results in the sub-critical (Theorem \ref{thm:main}) and near-critical (Theorem \ref{thm:main2}) cases. Sections \ref{sec3} and \ref{sec4} are devoted to the proof of Theorem \ref{thm:main}, while Sections \ref{sec5} and \ref{sec6} contain the proof of Theorem \ref{thm:main2}. The Appendix collects useful tools, notably a mapping between the FEP and the SWT, as well as standard estimates for random walks and large deviations for sums of i.i.d.\ geometric random variables.

For the proof of Theorem \ref{thm:main}, the key point is Lemma \ref{l4}, which allows us to identify from the initial configuration a condition under which separate regions never communicate. The rest of Section \ref{sec3} is devoted to estimating the size of these regions using standard random walk techniques. Section \ref{sec4} uses the inputs from Section \ref{sec3} to prove Theorem \ref{thm:main}.

Section \ref{sec5} contains two main results: the existence of supercritical regions and a decorrelation estimate for the restriction of the SWT dynamics in distant subboxes. Section \ref{sec6} collects these estimates and proves Theorem \ref{thm:main2}.

\section{\label{sec2}Model and results}

\subsection{\label{sec2.1}The model: Facilitated Exclusion Process}

Consider the discrete ring
\[
\mathbb{T}_{N}=\{1,2,\dots,N\}
\]
with $N+1=1$ and $0=N$. As an exclusion process, the state space for the FEP is defined as
\[
\Sigma_{N}:=\{0,1\}^{\mathbb{T}_{N}}
\]
where in a configuration $\eta=(\eta(x))_{x\in \T_N}\in\Sigma_{N}$,  $\eta(x)$ takes the value $0$ (resp. $1$) to indicate that the site is empty
(resp. occupied). The \emph{Facilitated Exclusion Process} (FEP) $\{\eta_{t}\}_{t\ge0}$
is a continuous-time Markov process on $\Sigma_N$ defined via the infinitesimal stochastic generator
\begin{equation}
\mathcal{L}_{N}f(\eta)=\sum_{x\in\mathbb{T}_{N}}c_{x,x+1}(\eta)\left(f\left(\eta^{x,x+1}\right)-f(\eta)\right),\label{eq:FEP-def}
\end{equation}
where $\eta^{x,y}$ denotes the configuration obtained from $\eta$
by exchanging the values at $x$ and $y$ (i.e., $\eta^{x,y}(x)=\eta(y)$,
$\eta^{x,y}(y)=\eta(x)$, and $\eta^{x,y}(z)=\eta(z)$ for $z\ne x,y$),
and
\begin{equation}
c_{x,x+1}(\eta):=\eta(x-1)\eta(x)(1-\eta(x+1))+\eta(x+2)\eta(x+1)(1-\eta(x)).\label{eq:tr-def}
\end{equation}
In other words, $c_{x,x+1}(\eta)$ encodes both the \emph{exclusion rule} (meaning particles cannot occupy the same site) as well as the FEP's \emph{kinetic constraint}: 
one needs a particle at site $x-1$ (resp. $x+2$) to \emph{facilitate} the
jump $x\to x+1$ (resp. $x+1\to x$).

Denote by $\mathbb{P}_{\mu}^{N}$ the law of the FEP trajectories
with initial distribution $\mu$ and driven by the generator $\mathcal L_N$. If the process starts from a configuration
$\eta\in\Sigma_{N}$, we simply write $\mathbb{P}_{\eta}^{N}$ instead of
$\mathbb{P}_{\delta_{\eta}}^{N}$. We denote by $|\eta|$ the number of particles of a configuration $\eta\in\Sigma_N$:
\begin{equation}
    |\eta|:= \sum_{x \in \mathbb{T}_N} \eta(x).
\end{equation}

\subsection{\label{sec2.2}Frozen, ergodic and transient components}

Because of the kinetic constraint, the FEP on the ring exhibits distinct behavior depending on the density.
As defined in \cite[Section 2B]{BES21}, a configuration $\eta \in \Sigma_N$ can either be
\begin{itemize}
\item \emph{frozen} if it has no neighboring particles, meaning $\eta_x \eta_{x+1} \equiv 0$,
\item \emph{ergodic} if it has no neighboring empty sites, meaning $(1-\eta_x) (1-\eta_{x+1}) \equiv 0$,
\item \emph{transient} otherwise. We let
\[
\Sigma_N^{\rm tr} := \left\{ \eta \in \Sigma_N : \exists x,y \in \mathbb T_N, \quad \eta(x) = \eta(x+1) = 0, \quad \eta(y) = \eta(y+1) = 1 \right\}
\]
be the set of transient configurations.
\end{itemize}

We denote by $|\eta|$  the number of particles in a configuration $\eta\in\Sigma_N$:
\begin{equation}
    |\eta| = \sum_{x \in \mathbb T_N} \eta_x.
\end{equation}
We call a configuration supercritical if $|\eta| > N/2$, subcritical if $|\eta| < N/2$, and critical if $|\eta| = N/2$. Note that by the pigeonhole principle, 
subcritical configurations cannot be ergodic, whereas supercritical configurations cannot be frozen.
%only supercritical configurations can be ergodic, and only subcritical or critical configurations can be frozen.

In what follows, given a trajectory $\{\eta(t), t\ge0 \}$ of the FEP, we denote by
\[
\mathcal T_{\rm tr}^N := \inf \left\{ t \ge 0 : \eta(t) \in \Sigma_N^{\rm tr} \right\}
\]
its \emph{transience time}, meaning the time the process needs to become either frozen or ergodic.

%Let us explain this trichotomy between the subcritical, supercritical and critical regions in more detail.
Let us consider the set of configurations with $K \in \{0,\ldots,N\}$ particles on $\mathbb{T}_N$:
\begin{equation}\label{eq:SigmaNK-def}
\Sigma_{N,K} := \left\{ \eta \in \Sigma_N : |\eta|  = K \right\} .
\end{equation}
Notice that $\Sigma_N = \bigcup_{K=0}^N \Sigma_{N,K}$. Since the FEP dynamics preserves the number of particles, any trajectory starting from a configuration in $\Sigma_{N,K}$ stays in $\Sigma_{N,K}$ for all times. Let
%Clearly, for each $\eta \in \Sigma_{N,K}$, we have $|\eta|=K$. Define
\[
\Sigma_{N,K}^{\rm tr} := \Sigma_{N,K} \cap \Sigma_N^{\rm tr}. %\qquad \text{and} \qquad \Sigma_{N,K}^{\rm rec} := \Sigma_{N,K} \setminus \Sigma_{N,K}^{\rm tr}.
\]
It is not hard to check that $\mathcal T_{\rm tr}^N$ is a.s. finite for any initial distribution \cite{BESS20}. Suppose now that the initial configuration is in $\Sigma_{N,K}$. It is not hard to check (see \cite{BESS20} for complete proofs):

\begin{itemize}
    \item \emph{Subcritical or critical case}. If $K\leq N/2$, after time $\mathcal T_{\rm tr}^N$ the system is in a \emph{frozen} configuration (and we speak of \emph{freezing time}).
    \item \emph{Supercritical case}. If $K>N/2$, after time $\mathcal T_{\rm tr}^N$ the system lives in $\Sigma_{N,K}\cap (\Sigma_N^{\rm tr})^c$ and its dynamics is irreducible inside this set.
\end{itemize}

\subsection{\label{sec2.3}Sharp bound for the subcritical freezing time}

Our first result is a sharp estimate on the transience (freezing) time in the subcritical region.
For that purpose, we start our process from the product Bernoulli measure
$\nu_{\rho}=\nu_{\rho}^{N}$ on $\Sigma_{N}$, namely
\begin{equation}
\nu_{\rho}^{N}:=\bigotimes_{x\in\mathbb{T}_{N}}\nu_{x}\qquad\text{with}\quad
\nu_x(1) = 1 - \nu_x(0) = \rho.\label{eq:nu-rho-def}
\end{equation}
In order to focus on the subcritical regime of FEP, we assume
\[
0 < \rho < \frac12,
\]
which guarantees in particular, by a standard large deviations estimate, that
\[
\limsup_{N \to \infty} \, \nu_\rho \left( |\eta| \le \frac N2 \right) = 1.
\]
%Notice that the total number of particles in the system is binomially
%distributed with $N$ trials and success probability $\rho$. In particular,
%with probability (exponentially) tending to one as $N\to\infty$, the total number
%of particles is less than or equal to $\frac{N}{2}$. Thus, with probability tending to one,
%the transience time $\mathcal{T}_{{\rm tr}}^{N}$ equals the absorption time to the frozen collection $\Sigma_{N,K}^{\rm fr}$.
Our first main result is the following, which states that the subcritical transience time is of order $\log^3N$.
\begin{thm}
\label{thm:main}For any $\rho \in (0,1/2)$, there exist constants $C_{\rm LB}=C_{\rm LB}(\rho)>0$ and $C_{\rm UB}=C_{\rm UB}(\rho)>0$ such that 
\[
\lim_{N\to\infty}\mathbb{P}_{\nu_{\rho}}^{N}\left[C_{\rm LB} \log^{3}N<\mathcal{T}_{{\rm tr}}^N<C_{\rm UB} \log^{3}N\right]=1.
\]
\end{thm}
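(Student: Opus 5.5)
The plan is to pass through the mapping of the FEP to the SSEP with traps (SWT) described in Appendix \ref{secA}. In the SWT each particle performs a rate-one symmetric walk until it falls into a trap, and traps correspond to empty sites (more exactly, to pairs of adjacent empty sites) of the FEP. Under this mapping the FEP is frozen exactly when every SWT particle has been trapped. The heuristic is as follows. Under $\nu_\rho$ with $\rho<1/2$, consider the walk $S(x)=\sum_{y\le x}(2\eta(y)-1)$, which has negative drift $2\rho-1$. A locally supercritical interval, meaning a stretch where particles outnumber holes, corresponds to an excursion of $S$ above a running minimum. Such excursions have length at most $C\log N$ w.h.p., and length at least $c\log N$ with high probability somewhere among the $N$ sites. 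Freezing a region of size $L$ takes diffusive time $L^2$ for each particle. Taking the maximum over roughly $N/\log N$ essentially independent regions adds a further $\log N$ factor, coming from the tail $e^{-t/L^2}$. This gives $L^2\log N=\log^3 N$.

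\textbf{Upper bound.} First I will prove a deterministic separation lemma: if at some site $x$ the initial configuration has a surplus of empty sites to its right, in the sense that $S$ stays well below $S(x)$ on every window starting at $x$, then no particle ever crosses $x$. The reason is that the traps near $x$ outnumber the particles able to reach them, so the particles on either side are absorbed before crossing. Using large deviations for i.i.d. geometric variables (the gaps between particles), I will show that w.h.p. $\T_N$ is partitioned by such barrier sites into boxes of length at most $C_1\log N$, and that each box is itself subcritical. Inside a box of length $L\le C_1\log N$ the dynamics is an SWT with at least as many traps as particles. I will bound the freezing time there by $C L^2$ times a geometric number of attempts: a random walk in an interval of length $L$ reaches any given site within time $L^2$ with probability bounded below. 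This gives $\Prob(\text{box not frozen at } t)\le L\,e^{-ct/L^2}$. Choosing $t=C_{\rm UB}\log^3N$ and taking a union bound over the at most $N$ boxes finishes the upper bound.

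\textbf{Lower bound.} Using the Cramér/Mogulskii lower bound for the walk $S$, I will find w.h.p. at least $N^{\delta}$ disjoint, well-separated intervals, each of length $\ell=c\log N$. In each of them the particle count exceeds $\ell/2+\epsilon\ell$, and the configuration in a neighbourhood is "closed", so that the intervals evolve independently up to time $\log^3N$; this independence follows from the separation lemma applied to barriers between them. Inside such an interval there are more particles than available traps, so some particle must travel a distance of order $\ell$ before it can be absorbed. For the SWT I will also show that the last particle remains untrapped up to time $s$ with probability at least $e^{-Cs/\ell^2}$. This uses a comparison or attractiveness argument for the SWT: confine a particle to a trap-free subinterval of length of order $\ell$, and use the survival estimate of a random walk in an interval, taken from the Appendix. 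With $s=C_{\rm LB}\log^3N$ the survival probability is $N^{-CC_{\rm LB}/c^2}$, which is at least $N^{-\delta/2}$ for small $C_{\rm LB}$. Independence over the $N^\delta$ intervals then shows that one of them is still active w.h.p.

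\textbf{Main obstacle.} I expect the hard part to be the separation lemma, which must hold uniformly for all time even though the FEP is non-attractive. I will work entirely in the SWT, which is attractive, and use a counting argument: each particle that approaches $x$ from the left must fill a trap there first. Matching the constants $c\log N$ and $C\log N$ for the box sizes is routine by comparison.
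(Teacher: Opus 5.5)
Your architecture matches the paper's: the SWT mapping; barriers cutting $\T_N$ into boxes of length $O(\log N)$, in which every particle dies at the latest when it reaches the boundary, giving $L e^{-ct/L^2}$ and a union bound; and, for the lower bound, many isolated $\Theta(\log N)$ blocks, each surviving time $C_{\rm LB}\log^3N$ with probability $N^{-O(C_{\rm LB})}$. The gap is in producing the barriers.

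First, your separation lemma is one-sided and false as stated. Take $\eta\equiv 1$ on $\{x-3,x-2,x-1\}$ and $\eta(x)=\eta(x+1)=0$. Then the moves $x-1\to x$, $x-2\to x-1$, $x\to x+1$ are all allowed, so a particle crosses $x$ no matter how depleted every window starting at $x$ is. On the ring this includes the wrapping windows, which remain very negative. The criterion actually used (proof of Lemma \ref{l5}) cuts the ring at $\widehat x$ and applies Lemma \ref{l4} to both ends of the resulting segment. It is two-sided: no window $\{\widehat x+1,\dots,\widehat x+\ell\}$ and no window $\{\widehat x+1-\ell,\dots,\widehat x\}$ may be more than half full. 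So $\widehat x$ is essentially a cut point of the height function (past minimum and future maximum), and the left-hand windows wrap around the whole ring. This deterministic part is short; the real difficulty is probabilistic.

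Large deviations for the geometric gaps only control sums over long windows, and that cannot produce cut points. Consider the $7$-periodic configuration $0000111\,0000111\cdots$. It has at most $3\ell/7+2$ particles in every window of length $\ell$. Yet each of its holes is adjacent to a $111$ block and can be filled by the moves above, so it has no barrier at all. What you need is that a cut point occurs within $O(\log N)$ of every site, with failure probability $o(N^{-1})$; this is a renewal statement.

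The paper obtains it from the attempts $T_k,T_k'$. The number of failed attempts is geometric with parameter $\rho/(1-\rho)$, and each attempt has an exponential tail (the Catalan generating-function bound behind \eqref{eq:2nd-cond}). Together these give $E_0[e^{\lambda_1\mathfrak T}]<\infty$ (Lemma \ref{lem:exp-mom}). Starting the walk at height $n_\rho=\lfloor c_\rho\log N\rfloor$ then makes the wrap-around conditions hold with probability $1-O(N^{-2})$ (Corollary \ref{c1} and \eqref{eq:l5-1}). Hence $\widehat x=x+\mathfrak T_x$ is a barrier within $\lambda_2\log N$ of every $x$.

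There is also a smaller fixable issue in your lower bound. An interval with more than $\ell/2+\epsilon\ell$ particles need not contain a trap-free SWT stretch of length $\Theta(\ell)$: for example, $1111\,00$ repeated maps to $1,1,1,-1$ repeated. Monotonicity only lets you lower the configuration, so you cannot remove those traps, and your one-particle confinement bound does not apply. Ask instead for runs of $\Lambda'\log N$ consecutive particles (Lemma \ref{l7}). Then set all SWT sites outside the runs to traps of depth $N$. This lowers the configuration, so it can only shorten the freezing time, and it makes the runs exactly independent. No barriers are needed in the lower bound.
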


\begin{rem}[Asymptotics of the constants near criticality, $\rho \uparrow \frac12$]
One might be interested in the asymptotics of the constants $C_{\rm LB} (\rho),C_{\rm UB} (\rho)>0$
in Theorem \ref{thm:main} as $\rho$ approaches the critical density,
i.e., as $\rho\uparrow\frac{1}{2}$.
%One might expect that the transience time $\mathcal{T}_{\rm tr}^N=\mathcal{T}_{\rm tr}^N(\rho)$ increases as $\rho$ increases, since if the system has more particles then it would naturally be harder to reach a configuration in which all particles are isolated. Moreover, the transience time at criticality $\rho = \frac12$ is expected to be polynomial in $N$ rather than polylogarithmic. Thus, it is natural to expect that both constants $C_{\rm LB}(\rho),C_{\rm UB}(\rho)$ diverge to infinity as $\rho\uparrow\frac12$.
%On the one hand, the lower bound constant $C_{\rm LB} (\rho)$ can be chosen to be independent of $\rho$ in this regime; see \eqref{eq:c6-def} and \eqref{eq:C-def}.
On the one hand, the upper bound constant $C_{\rm UB} (\rho)$ diverges to infinity as
$\rho \uparrow \frac{1}{2}$; see Remark \ref{rem:CUB-div}, \eqref{eq:lam2-def}, \eqref{eq:Lam-def},
and \eqref{eq:C'-def}. This observation is consistent with the heuristics
that the transience time of the FEP dynamics starting from product
Bernoulli $\rho$ initial distribution should increase as $\rho$
approaches the critical density $\frac{1}{2}$.
%This is indeed true for the upper bound constant $C_{\rm UB}(\rho)$;
A refined analysis which keeps track all the quantitative bounds gives
\begin{equation}
C_{\rm UB}(\rho) \simeq \left( \frac12 - \rho \right)^{-4} \log^2 \frac1{\frac12-\rho}\qquad\text{as}\quad \rho \uparrow \frac12.\label{C'(rho)}
\end{equation}

On the other hand, according to our proof technique, the lower bound constant
$C_{\rm LB}(\rho)$ does not diverge to infinity as $\rho$ approaches $\frac12$; see \eqref{eq:c6-def} and \eqref{eq:C-def}. This is due to the fact that the key part in the lower bound proof, Lemma \ref{l7}, is due to an argument which is uniform near criticality. More in detail, Lemma \ref{l7} estimates the typical number of consecutive particles in $\mathbb{T}_N$ according to the initial Bernoulli distribution $\nu_\rho^N$ (cf. \eqref{eq:nu-rho-def}), which would have a typical scale of $a(\rho)\log N$ such that $a(\rho) \uparrow a(\frac12) \in \mathbb{R}$ as $\rho \uparrow \frac12$.\footnote{In fact, $a(\rho) = (\log \frac1\rho)^{-1}$ by \cite[Example 1.5]{RAS14}.} Because of this, the constant $C_{\rm LB}(\rho)$ that we have found also converges to some real number rather than diverging to infinity.
To improve this issue and obtain $C_{\rm LB}(\rho)$ which diverges as $\rho \uparrow \frac12$, we would have to consider not only the length of consecutive particles, but also the length of the interval that those particles can travel. We did not pursue this further.
\end{rem}

\subsection{\label{sec2.4}Close-to-criticality freezing time}

Our second result is a characterization of the freezing time for the FEP close to criticality. In this regime, the CLT density fluctuations from the grand canonical (product) distributions $\nu_\rho$ do not typically allow to get arbitrarily close to criticality. For this reason, we instead introduce the canonical state $\nu_{N,K}$ as the uniform state over
$\Sigma_{N,K}$ (cf. \eqref{eq:SigmaNK-def}),
namely
\begin{equation}\label{eq:nuNK-def}
\nu_{N,K}(\eta)= {N \choose K}^{-1} {\bf 1}_{\{\eta\in \Sigma_{N,K}\}}.
\end{equation}
We want the configuration to be close to criticality, therefore we set\footnote{In this paper, $\lfloor\gamma\rfloor$ (resp. $\lceil\gamma\rceil$)
denotes the greatest (resp. least) integer less (resp. greater) than or equal to $\gamma$.}
\begin{equation}
\label{eq:Kalpha}
K=K_N^\alpha:= \left\lfloor \frac N2 - N^{1-\alpha} \right\rfloor,
\end{equation}
which sets the density in the ring at
$\rho_\alpha\simeq 1/2-N^{-\alpha}$, where $\alpha \in (0,1)$. With a slight abuse of notations, we shorten as $\Prob_{\alpha}^N$ the distribution of the FEP starting from the canonical state $\nu_{N,K_N^\alpha}$.
We claim the following.

\begin{thm}
\label{thm:main2}
Defining
\[
\ell_N=\ell_N(\alpha) :=
\begin{cases}
N & \text{for} \quad 1/2 \le \alpha < 1 , \\
N^{2\alpha} & \text{for} \quad 0<\alpha< 1/2 ,
\end{cases}
\]
for any $\varepsilon>0$,
\[
\lim_{N\to\infty}\Prob_\alpha^N \left( \ell_N^2 N^{-\varepsilon}<\mathcal{T}_{\rm tr}^N<\ell_N^2N^{\varepsilon} \right) = 1.
\]
\end{thm}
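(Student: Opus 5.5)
The plan is to go through the mapping of Appendix \ref{secA} from the FEP to the SSEP with traps (SWT). In the SWT particles perform independent-looking rate-one symmetric random walks until they settle in traps, and freezing of the FEP corresponds to every SWT particle being trapped. The whole proof is driven by one static quantity. Under $\nu_{N,K_N^\alpha}$, we look for the largest interval $I\subset\T_N$ that is \emph{locally supercritical}, meaning its particle count exceeds $|I|/2$ by some margin, or equivalently the walk $S_x=\sum_{y\le x}(2\eta(y)-1)$ has a large excursion over $I$.

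\textbf{Step 1 (static estimate).} The walk $S$ is a bridge from $0$ to $2K-N\simeq -2N^{1-\alpha}$, i.e.\ drift $-2N^{-\alpha}$ per step. An increment over a window of length $\ell$ has mean $-2\ell N^{-\alpha}$ and fluctuations $\sqrt\ell$. A window can therefore be supercritical only if $\sqrt\ell\gtrsim \ell N^{-\alpha}$, i.e.\ $\ell\lesssim N^{2\alpha}$, and always $\ell\le N$. Using the canonical/grand-canonical comparison together with Gaussian/large deviation bounds (Appendix), I would show:
\begin{itemize}
\item w.h.p.\ no interval of length $\ge \ell_N N^{\varepsilon}$ has a positive excursion;
\item w.h.p.\ some interval of length $\ge \ell_N N^{-\varepsilon}$ carries an excess of order $\sqrt{\ell_N}N^{-\varepsilon}$ particles.
\end{itemize}
For $\alpha\ge1/2$ the second point is just the bridge CLT on $\T_N$.

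\textbf{Step 2 (upper bound).} I would cut $\T_N$ at the points where $S$ attains running minima over blocks of size $\ell_N N^{\varepsilon}$. These are the analogues of the barrier points of Lemma \ref{l4}, and there the local configuration is subcritical enough that traps on both sides block all crossings. The SWT then splits into independent pieces of length at most $\ell_NN^\varepsilon$. In each piece every particle is absorbed within time $(\ell_NN^{\varepsilon})^2$ up to log factors, by the random walk hitting estimates of the Appendix as in Section \ref{sec4}. A union bound over $N$ pieces costs only a $\log N$ factor, which is absorbed into $N^{\varepsilon}$.

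\textbf{Step 3 (lower bound).} Take the supercritical interval $I$ of length $\ell_NN^{-\varepsilon}$ from Step 1. Its excess particles can only be absorbed after the mass in $I$ spreads to traps outside, which requires some particle to travel distance of order $|I|$. This is the step I expect to be the main obstacle, because the SWT dynamics inside $I$ is not independent of the surroundings. I would first prove a decorrelation inequality: by attractiveness of SWT, compare with the dynamics restricted to a subbox slightly larger than $I$ with reflecting/absorbing boundaries, and control the influence of distant parts via the finite speed of random walks up to time $\ell_N^2N^{-2\varepsilon}$. Then I would show that in the restricted system a positive excess persists with high probability up to time $\ell_N^2N^{-3\varepsilon}$, via a diffusive bound on the displacement of the boundary particles (the maximal displacement is at most $\sqrt t\,N^{\varepsilon/2}\ll |I|$). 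Rescaling $\varepsilon$ then gives both bounds of Theorem \ref{thm:main2}.
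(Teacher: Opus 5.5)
\textbf{Step 3 does not reach the scale $\ell_N^2$.} Your interval $I$ stops being supercritical as soon as a net mass equal to its excess $E\approx\sqrt{\ell_N}N^{-\varepsilon}$ has crossed $\partial I$. Nothing needs to travel a distance of order $|I|$, so the displacement bound must be compared with $E$, not with $|I|$. (Even the comparison with $|I|$ fails as written: at $t=\ell_N^2N^{-3\varepsilon}$ one has $\sqrt t\,N^{\varepsilon/2}=\ell_NN^{-\varepsilon}=|I|$.)

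A diffusive bound only tells you that the particles able to cross $\partial I$ lie in a layer of width $w=\sqrt t\,N^{\varepsilon/2}$, and that layer holds of order $w$ particles. In FEP terms, the ordered particles of $I$ may spread over $I$ enlarged by $w$ on each side, and this set is no longer supercritical once $w\gtrsim E$. Either way, the net outflow is controlled only up to $t\approx E^2$, so your argument yields a lower bound of order $\ell_N$ (up to $N^{\pm\varepsilon}$), not $\ell_N^2$.

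To get $\ell_N^2$ along your route, you would need the net current through $\partial I$ up to time $t$ to be of order $t^{1/4}$ up to small powers of $N$. Equivalently, tagged FEP particles would have to be subdiffusive. This is a genuine fluctuation estimate for the SWT that you do not supply. Choosing absorbing or reflecting boundaries in your restricted system does not remove it: in both cases you still have to bound crossings of a boundary by something much smaller than $E$.

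The paper avoids estimating the size of currents altogether. It takes $q=\beta\log N$ consecutive supercritical boxes of size $k_N$ (Lemma \ref{lem:goodbox}). If no particle moves farther than $k_N$, box masses change only through nearest-box currents $J_r$, so the boxes between $r<r'$ keep positive total mass whenever $J_r\ge 0\ge J_{r'}$. Each sign has probability at least $1/4$ by reflection symmetry, and Corollary \ref{cor:decorrelation} makes the signs in distant groups nearly independent, so such a pair exists w.h.p.

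\textbf{Step 2 misidentifies the barriers.} By Lemma \ref{l4} applied on both sides, $x$ is a barrier iff it is a cut point of the periodically extended height function. That means it is the minimum of everything before $x$ and the maximum of everything after it, as in \eqref{eq:T-prop}. A running or block minimum does not in general satisfy the second condition, so particles to its right can cross it leftwards, and your pieces are not independent.

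For $\alpha\ge 1/2$ your blocks have length $N^{1+\varepsilon}$, so no cutting occurs. For $\alpha>1/2$ there are w.h.p.\ no barriers at all: a barrier forces the height function to stay, over a whole period, in a band of width $N-2K=O(N^{1-\alpha})\ll\sqrt N$. Without barrier endpoints, the argument of Section~\ref{sec4} (remove the traps by monotonicity, then absorb at the endpoints) has no absorbing boundary and gives nothing. The paper instead uses the uniform bound of \cite[Theorem 1.1]{EM24} on the SWT transience time from arbitrary initial data. An elementary substitute: negative sites are never created and at least one always exists, so some site is a trap forever. Every particle therefore dies once its stirring path has covered $\T_K$, which happens for all particles within time $O(K^2\log K)$.

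For $\alpha<1/2$, true cut points within distance $N^{2\alpha+\varepsilon}$ of every site plausibly exist; the remark after Theorem \ref{thm:main} hints at this. Proving it, however, means redoing Section~\ref{sec3} with $\rho_N\uparrow 1/2$, where $\lambda_1,\lambda_2$ degenerate, and then transferring the result to $\mu_\alpha$. The paper avoids this with a union bound over a surviving particle and its two nearest traps. Short trap-free stretches are exited before time $\ell_N^2N^{\varepsilon}$, and long ones are unlikely to have been initially supercritical.
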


\begin{rem}\label{rem:phase-sep}
The scale $\ell_N$ defined in this theorem can be seen as the typical scale of the largest supercritical clusters. For large $\alpha$, we are very close to criticality, and the subcriticality is locally drowned in typical CLT fluctuations of the density: for this reason, one is able to find, at macroscopic scale $N$, local regions of size $\Theta(N)$ which are supercritical. In order to freeze, particles in these regions need to leave them, which happens over a typical $\ell_N^2=N^2$ diffusive timescale.

On the other hand, for small $\alpha$, any cluster of size much larger than $\ell_N=N^{2\alpha}$ will surely be subcritical because the CLT fluctuations are not enough to compensate the subcriticality of the configuration. conversely, at smaller scales than $\ell_N$,  one will be able to find supercritical clusters which need to spread out their excess particles, thus resulting in a typical timescale $\ell_N^2 = N^{4\alpha}$ for the freezing time.
\end{rem}

%\begin{rem}
%The scale $\ell_N$ is the largest scale at which supercritical clusters can be found. On the one hand, for $\alpha \ge 1/2$, we are very close to criticality, and the subcriticality is locally drowned in typical CLT fluctuations of the density; for this reason, one is able to find, at macroscopic scale $N$, local regions of size $\Theta(N)$ which are supercritical. In order to freeze, particles in these regions need to leave them, which happens over a typical $N^2$ timescale.
%
%On the other hand, suppose that $\alpha < 1/2$. Any cluster of size much larger than $\ell_N=N^{2\alpha}$ will surely be subcritical because the CLT fluctuations are not enough to compensate the subcriticality of the configuration. However, at smaller scales than $\ell_N$  one will be able to find supercritical clusters coming from the CLT fluctuations, thus resulting in a typical timescale $N^{4\alpha}$ for the freezing time.
%\end{rem}

%\cosw{SW: Please see if the next comment makes sense!}
%\coo{O: I agree that the same conclusion holds, but the proof cannot be what you indicate: we would need to keep track of how fast the probability in Theorem~\ref{thm:main} goes to one in order to argue that the change of measure does not affect the convergence.}

\begin{rem}
The same conclusion as in Theorem \ref{thm:main} holds as well if we replace the initial $\rho$-Bernoulli product measure $\nu_\rho$ with the uniform initial distribution $\nu_{N,K}$ on $\Sigma_{N,K}$ for $K = K(\rho) := \lfloor N\rho \rfloor$ where $\rho \in (0,1/2)$. Indeed, we may replace all the arguments given in Sections \ref{sec3} and \ref{sec4} for the grand-canonical product measure $\nu_\rho$ with the same results for the canonical measure $\nu_{N,K}$ via a simple comparison of measures, as e.g. in Lemma \ref{l:mupi}. This is feasible, since in principle all estimates in Section \ref{sec3} has an exponential error (although they are stated as polynomial for the reader's convenience), whereas the comparison-of-measures estimate requires only a polynomial cost in $N$.
%given that Theorem \ref{thm:main} holds for $\nu_\rho$, a simple comparison of measures, as in Lemma \ref{l:mupi}, gives immediately the desired result for $\nu_{N,K}$.
\end{rem}

\section{\label{sec3}Barriers and logarithmic supercritical intervals}

In this section, we provide a preliminary analysis on the subcritical Bernoulli initial distribution on $\mathbb{T}_N$, which will be exploited in the next section.
%in particular in the proof of the upper bound part of Theorem \ref{thm:main}.
In particular, as mentioned in the introduction, the main objective is to deduce that the typical size of a locally supercritical interval is of order $\log N$.

The lower bound part is easy, since we only need to calculate the size of a fully occupied interval. This is readily provided in the following elementary lemma, for which we provide a proof in Appendix \ref{a:proof-l7} for
the sake of completeness. The bound below is far from optimal but sufficient for our purpose.
\begin{lem}
\label{l7}There exists a constant $\Lambda' = \Lambda'(\rho)>0$ such that the following statement
holds for all sufficiently large $N$. Given i.i.d.\@ $\rho$-Bernoulli random variables $X_{1},\dots,X_{M}$
where $M \in \{ \lfloor \sqrt N \rfloor , \lceil \sqrt N \rceil \}$, with probability $1-\frac{1}N$,
there exist a failure (hole) and then at least $\Lambda'\log N+1$ consecutive
successes (particles).
\end{lem}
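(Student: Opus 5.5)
We split the string $X_1,\dots,X_M$ into disjoint consecutive blocks of length $L:=\lfloor \Lambda'\log N\rfloor+2$. There are $m:=\lfloor M/L\rfloor$ such blocks, which is of order $\sqrt N/\log N$. Call block $j$ \emph{good} if its first entry is a failure ($0$) and the remaining $L-1\ge \Lambda'\log N+1$ entries are all successes ($1$). If some block is good, the statement holds: within that block a hole is followed by at least $\Lambda'\log N+1$ consecutive particles.

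The blocks are disjoint, so the events that they are good are independent. Each has probability
\[
p:=(1-\rho)\rho^{L-1}\ \ge\ (1-\rho)\,\rho\,N^{-\Lambda'\log(1/\rho)}.
\]
We fix $\Lambda'>0$ small enough that $\Lambda'\log(1/\rho)\le 1/4$, so that $p\ge c(\rho)N^{-1/4}$ with $c(\rho):=(1-\rho)\rho$.

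The probability that no block is good is then
\[
(1-p)^m\le \exp(-pm)\le \exp\bigl(-c'(\rho)\,N^{1/2-1/4}/\log N\bigr)=\exp\bigl(-c'N^{1/4}/\log N\bigr),
\]
which is at most $1/N$ for all sufficiently large $N$. This gives the claim with probability at least $1-\frac1N$.

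The only care needed is bookkeeping. The floors in $L$ and $m$ do not matter, and $L\ge 2$ ensures each block has room for a hole followed by a run. Both choices $M=\lfloor\sqrt N\rfloor$ and $M=\lceil\sqrt N\rceil$ are handled identically, since only $m\gtrsim \sqrt N/\log N$ is used.
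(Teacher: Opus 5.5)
Your proposal is essentially the paper's proof. The paper also uses disjoint blocks of one site followed by $k_N$ sites. It notes that each block is independently ``a hole followed by $k_N$ particles'' with probability $(1-\rho)\rho^{k_N}$, and it chooses $\Lambda'$ small enough ($\Lambda'|\log\rho|<1/2$, against your $\le 1/4$) that the expected number of good blocks grows polynomially.

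There is one slip in your bookkeeping of the run length. With $L=\lfloor\Lambda'\log N\rfloor+2$ you get $L-1=\lfloor\Lambda'\log N\rfloor+1$, which is strictly smaller than $\Lambda'\log N+1$ unless $\Lambda'\log N$ is an integer, so your claimed inequality $L-1\ge\Lambda'\log N+1$ is false. Taking $L=\lceil\Lambda'\log N\rceil+2$, which matches the paper's $k_N=\lceil\Lambda'\log N\rceil+1$, fixes this. It only weakens your bound to $p\ge(1-\rho)\rho^2N^{-\Lambda'\log(1/\rho)}$, and the rest of your argument goes through unchanged.
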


The upper bound part is more challenging, and constitutes the main result of this section.
To achieve this, we identify the points on the torus that the particles cannot pass through with a probability tending to $1$. In this regard, we give the following definition:

\begin{defn}
\label{def:bar}For $\eta\in\Sigma_{N}$, we say that $x\in\mathbb{T}_{N}$
is a \emph{barrier} of $\eta$ if the FEP dynamics starting from $\eta$
cannot place a particle on either $x$ or $x+1$, i.e., if 
\[
\mathbb{P}_{\eta}^{N}[\eta_{t}(x)=\eta_{t}(x+1)=0\quad \text{for all}\enspace t\ge0]=1.
\]
Denote by $\mathscr{B}(\eta)\subset\mathbb{T}_{N}$ the collection
of all barriers of $\eta$. 
\end{defn}

Note that $\mathscr{B}$ maps $\Sigma_{N}$ to the finite subsets of of $\mathbb{T}_{N}$, and that the set of barriers of a configuration is a deterministic function of this configuration.

Given a subset $A=\{x_{1},x_{2},\dots,x_{m}\}$ of $\mathbb{T}_{N}$
with $m\ge2$ where $1 \le x_{1}<x_{2}<\cdots<x_{m} \le N$, the torus $\mathbb{T}_{N}$
is divided by $A$ into $m$ intervals $\{x_{i}+1,\dots,x_{i+1}\}$
for $1\le i\le m$, where $x_{m+1} := x_{1}$. In this case, we say that
$A$ is $\gamma$-\emph{nice} if each of the $m$ intervals has cardinality
smaller than or equal to $\gamma$. It is clear that if $A\subset A'$ and $A$
is $\gamma$-nice, then $A'$ is also $\gamma$-nice.

The following lemma is the main statement of Section \ref{sec3}.
\begin{thm}
\label{thm:key} For any $\rho \in (0,\frac12)$, there exists a constant $\Lambda = \Lambda(\rho)>0$ such that
\[
\lim_{N\to\infty}\nu_{\rho}^{N}(\eta\in\Sigma_{N}:\mathscr{B}(\eta)\enspace\text{is}\enspace \Lambda \log N\text{-nice})=1.
\]
\end{thm}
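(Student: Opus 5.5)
The plan is to reduce Theorem~\ref{thm:key} to a purely static statement about the height function of the initial configuration. Then I would bound the gaps between the resulting deterministic barriers by a renewal/large deviation argument, and conclude with a union bound over $\mathbb{T}_N$.

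\emph{Step 1: a deterministic sufficient condition for barriers.} Define the height function $S_x(y) := \sum_{z=x+1}^{y} (1-2\eta(z))$, so each empty site contributes $+1$ and each particle $-1$. It is read to the right of $x$, and symmetrically to the left. I would first prove a combinatorial lemma, presumably what the paper calls Lemma~\ref{l4}. It states that if the walk $S_x$ stays strictly positive (with some fixed margin, say $\ge 1$) on the whole torus read rightwards from $x+1$, and likewise leftwards from $x$, then $x\in\mathscr{B}(\eta)$.

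The heuristic is that a particle can only move when it belongs to a cluster, and each particle that leaves a cluster consumes one empty site to become isolated. Hence a site pair $\{x,x+1\}$ of empty sites can only be filled if some interval adjacent to it contains at least as many particles as holes. I would make this rigorous via the SWT mapping of Appendix~\ref{secA}. There, empty sites following a particle act as traps and particles jump at rate $1$. Under the mapping, the positivity of $S_x$ translates into the statement that every particle coming from the right is trapped before reaching $x+1$, and symmetrically from the left. Because the FEP is not attractive, this step has to be done by an invariant argument on the dynamics: the excess of holes over particles in $[x+1,y]$, counted appropriately, cannot decrease to zero. I expect this deterministic lemma to be the main obstacle, in particular handling the wrap-around on the torus.

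\emph{Step 2: barriers as cut points of a drifted walk.} Under $\nu_\rho$, $S_x$ is a random walk with drift $1-2\rho>0$. The condition of Step~1 says that $x$ is a two-sided ``cut point'': the walk stays above level $S(x)$ to the right and below it to the left. I would first localize this condition. By a Cram\'er/Hoeffding bound, with probability $1-N^{-3}$, for every $x$ and every $y$ at distance $\ge L:=\Lambda_0\log N$ from $x$, the increment $S_x(y)$ exceeds $\tfrac12(1-2\rho)|y-x|$. The global positivity condition is therefore implied by the local condition on $[x-L,x+L]$ together with this good event. Since the total height change around the torus is $(1-2\rho)N+O(\sqrt N\log N)$, the same event handles the wrap-around.

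\emph{Step 3: exponential tails for gaps and union bound.} Fix a window $I$ of length $\Lambda\log N$. I would split $I$ into $k=\Lambda/(3\Lambda_0)$ disjoint blocks, each of length $3L$. In the middle of each block, the event ``$x$ is a local cut point on $[x-L,x+L]$'' has probability at least $c(\rho)>0$. This follows because a walk with positive drift never returns to its starting level with positive probability, and the left and right pieces are independent. Disjoint blocks give independent events, so the probability that no block contains a local cut point is at most $(1-c)^{k}\le N^{-3}$ once $\Lambda$ is large. A union bound over the $N$ starting positions of windows, together with the good event of Step~2, shows that with probability $1-O(N^{-2})$ every window of length $\Lambda\log N$ contains a point of $\mathscr{B}(\eta)$. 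That is, $\mathscr{B}(\eta)$ is $\Lambda\log N$-nice, which proves Theorem~\ref{thm:key}.
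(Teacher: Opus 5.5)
Your Steps 1 and 2 are fine. Step 1 is exactly Lemma \ref{l4}, applied from both ends of the segment obtained by cutting the edge $\{x,x+1\}$; no particle can cross that edge while both its endpoints are empty. It is proved by a direct invariance argument on the FEP, so you need neither the SWT nor any special treatment of the wrap-around. Your margin $\ge 1$ is stronger than the sharp condition $\ge 0$, which is harmless. Your global large-deviation event in Step 2 is a legitimate substitute for the paper's offset $Y_0=n_\rho$ combined with Corollary \ref{c1} and \eqref{eq:hit-prob}.

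The proof breaks in Step 3. A window of length $\Lambda\log N$ contains only $k=\Lambda/(3\Lambda_0)$ blocks of length $3L=3\Lambda_0\log N$, and this number is a constant independent of $N$. Hence $(1-c)^k$ is a fixed positive number. It cannot be made $\le N^{-3}$ by taking $\Lambda$ large, since $\Lambda$ may not depend on $N$, and the union bound over $N$ windows collapses. To get $(1-c)^k\le N^{-3}$ with independent blocks of length $\Theta(\log N)$ you need $k\asymp\log N$ blocks, i.e.\ windows of length $\Theta(\log^2 N)$. As written, your argument only shows that $\mathscr{B}(\eta)$ is $C\log^2 N$-nice, which in Section \ref{sec4.1} would give an upper bound of order $\log^5 N$ instead of $\log^3 N$.

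The missing idea is an exponential tail, uniform in $N$, for the distance from a given site to the first cut point on its right. Independence over deterministic blocks cannot give this, because under your localization certifying a single candidate costs $\Theta(\log N)$ sites. What saves the day is that a \emph{failed} candidate is detected at a stopping time with exponential tails, so trials can be run back to back at $O(1)$ expected cost each.

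This is the renewal argument of Lemma \ref{lem:exp-mom}, in the paper's sign convention:
\begin{itemize}
\item The walk attempts a cut point at each new running minimum $T_k$.
\item The attempt fails exactly when the walk later climbs one level above $Y_{T_k}$. This happens with probability $\rho/(1-\rho)<1$, at the stopping time $T_k'$, after which one waits for the next new minimum $T_{k+1}$.
\item The number of attempts is geometric, and each attempt has an exponential moment (Lemma \ref{l2} and the Catalan-number computation).
\item Hence $E_0[e^{\lambda_1\mathfrak{T}}]<\infty$ and $P_{n_\rho}(\mathfrak{T}>\lambda_2\log N)\le\lambda_3N^{-2}$ (Corollary \ref{c0}).
\end{itemize}
Replace Step 3 by such a renewal bound. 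Handle the left-hand condition near the starting site, for instance by the paper's offset $Y_0=n_\rho$; your good event already covers distances beyond $L$. Then take the union bound over $x$ as in the paper, and the proof closes.
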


To prove Theorem \ref{thm:key}, we show that on an event of probability going to $1$, we can construct a subset of $\mathbb{T}_{N}$ which is
contained in $\mathscr{B}(\eta)$ and is $\Lambda \log N$-nice. More specifically, for each $x\in \mathbb{T}_N$, we try to find a barrier $\widehat{x}\in \mathbb{T}_N$ such that $d(x,\widehat{x}) \le \Lambda \log N$. Then, collecting such $\widehat{x}$ for all $x$ would prove the theorem since $d(\widehat{x},\widehat{\widehat{x}}) \le \Lambda \log N$. The key strategy to achieve this goal is to analyze the i.i.d. $\rho$-Bernoulli random variables $\eta_{x+1},\eta_{x+2},\dots,\eta_x$ via the corresponding asymmetric random walk on $\mathbb{Z}$. This procedure will be explained in a general manner in Section \ref{sec3.1}.

The construction mentioned above relies on the following characterization of barriers.

%\coo{O: I moved the following lemma and its proof at this spot, so that it makes more sense why we introduce the divider.}

\begin{lem}
\label{l4}Instead of the torus $\mathbb{T}_{N}$, consider the FEP dynamics (cf. \eqref{eq:FEP-def}) on
the non-periodic segment $\mathbb{L}_{N}:=\{1,2,\dots,N\}$ (instead
of the torus $\mathbb{T}_{N}$), generated by
\[
\mathcal L_N' f (\eta) = \sum_{x=1}^{N-1} c_{x,x+1} (\eta) \left( f \left( \eta^{x,x+1} \right) - f(\eta) \right),
\qquad \eta \in \{0,1\}^{\mathbb L_N}, \quad f : \{0,1\}^{\mathbb L_N} \to \mathbb R,
\]
where $c_{x,x+1}(\eta)$ was defined at \eqref{eq:tr-def}.
Then for any $\eta\in\{0,1\}^{\mathbb{L}_{N}}$,
site $1$ remains empty at all times with probability $1$ in the dynamics starting from $\eta$ if and only if
\begin{equation}
2\sum_{i=1}^{\ell}\eta(i)\le\ell\qquad\text{for all}\quad1\le\ell\le N.\label{eq:l4-1}
\end{equation}
\end{lem}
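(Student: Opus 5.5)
The plan is to prove the two implications separately. Sufficiency will follow from an invariance argument, and necessity from an explicit finite sequence of allowed jumps. Write $S_\ell(\eta):=\sum_{i=1}^{\ell}\eta(i)$.

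\emph{Sufficiency.} I would show that condition \eqref{eq:l4-1} is preserved by every transition of $\mathcal L_N'$. Since the case $\ell=1$ of \eqref{eq:l4-1} gives $\eta(1)=0$, this proves that site $1$ stays empty forever, deterministically along every trajectory.
\begin{itemize}
\item A rightward jump $x\to x+1$ changes only $S_x$, and it decreases it by one. Such a jump can only help.
\item A leftward jump $x+1\to x$ changes only $S_x$, increasing it by one. It requires $\eta(x)=0$ and $\eta(x+1)=\eta(x+2)=1$, so in particular $x+2\le N$.
\end{itemize}
In the leftward case, $S_{x+2}=S_{x-1}+2$. The condition at $\ell=x+2$ then gives $2S_{x-1}+4\le x+2$, that is, $2S_{x-1}\le x-2$. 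After the jump, $S_x=S_{x-1}+1$, so $2S_x\le x$. Hence \eqref{eq:l4-1} holds again, and by induction over the (a.s.\ locally finite) sequence of jumps it holds at all times.

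\emph{Necessity.} Suppose some $\ell$ satisfies $2S_\ell(\eta)>\ell$, and assume $\eta(1)=0$ (otherwise there is nothing to prove). Since the state space is finite and all rates are $0$ or $1$, it suffices to exhibit a finite sequence of allowed jumps leading to a configuration with a particle at $1$. Such a path has positive probability, which contradicts "empty with probability $1$".

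I would only use jumps between sites inside $\{1,\dots,\ell\}$ whose facilitating particle is also inside $\{1,\dots,\ell\}$. This keeps the number of particles in $[1,\ell]$ equal to $S_\ell>\ell/2$ along the whole path. Whenever site $1$ is empty, the sites $2,\dots,\ell$ carry more than $\ell/2>(\ell-1)/2$ particles. By pigeonhole, there is then an adjacent occupied pair inside $[2,\ell]$.

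Take the leftmost such pair $(y,y+1)$, so $y\ge2$. Then $\eta(y-1)=0$, since otherwise $(y-1,y)$ would be an adjacent occupied pair further to the left. The jump $y\to y-1$ is therefore allowed, facilitated by the particle at $y+1$, and it stays inside the box. This jump strictly decreases the sum of the positions of the particles in $[1,\ell]$. Iterating, the procedure must terminate, and it can only terminate when $\eta(1)=1$.

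The only delicate point is making sure the chosen moves never use facilitation from outside $[1,\ell]$ nor exchange particles across $\ell$. The leftmost-pair choice handles both, since both the facilitator and the target lie in $[y-1,y+1]\subset[1,\ell]$. I do not expect any real obstacle beyond this bookkeeping.
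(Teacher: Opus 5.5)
Your proof is correct. Its sufficiency half is the paper's argument: every allowed transition preserves \eqref{eq:l4-1}. Rightward jumps preserve it trivially, and a leftward jump across $(x,x+1)$ changes only $S_x$, which is controlled by the condition at $\ell=x+2$.

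For necessity you use the same pigeonhole-and-push-left idea as the paper, but with a different termination mechanism. The paper takes the \emph{smallest} violating index $\ell^\star$ and asserts that a single leftward push yields a configuration with strictly smaller $\ell^\star$. You instead fix one violating window $[1,\ell]$, use only jumps internal to it, always push the leftmost adjacent pair, and use the sum of particle positions as a strictly decreasing potential. Your version is the more robust one, because the paper's one-step claim is not literally true. On $\mathbb{L}_7$ with $\eta=(0,1,0,0,1,1,1)$, the smallest violating index is $7$. The only available leftward push gives $(0,1,0,1,0,1,1)$, whose smallest violating index is still $7$; it drops to $5$ only after a second push. So the paper's induction needs an extra step, or a potential such as yours.

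Two small precisions for your write-up:
\begin{itemize}
\item What forces an adjacent pair in $\{2,\dots,\ell\}$ is $S_\ell\ge\lfloor\ell/2\rfloor+1>\lceil(\ell-1)/2\rceil$, where $\lceil(\ell-1)/2\rceil$ is the maximal number of pairwise non-adjacent particles on $\ell-1$ sites. The weaker bound $>(\ell-1)/2$ that you also mention would not suffice on its own.
\item When the leftmost pair is $(2,3)$, the emptiness of site $y-1=1$ comes from the standing assumption $\eta(1)=0$, not from leftmost-ness.
\end{itemize}
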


\begin{proof}
Since we are in finite volume, to prove the necessary condition, it is enough to choose an arbitrary configuration $\eta$ which does not satisfy \eqref{eq:l4-1} and find a sequence of FEP jumps allowing to fill the origin. For this reason, fix $ \eta$ such that $\sum_{i=1}^{\ell^\star}\eta(i)>\ell^\star/2$ for some $1\le\ell^{\star}\le N$, {and choose the smallest such $\ell^\star$}. This means that there
are at least $\frac{\ell^{\star}+1}{2}$ particles on $\{1,\dots,\ell^{\star}\}$.
If $\eta(1)=0$, then there are at least $\frac{\ell^{\star}+1}{2}$
particles on $\{2,\dots,\ell^{\star}\}$, thus the pigeonhole principle
guarantees the existence of a pair of neighboring particles, so the
FEP mechanism can push at least one particle to the left. We are now left with a new configuration satisfying the wanted property with a strictly smaller $\ell^\star$. This cannot
be repeated infinitely many times, so there exists a strategy that puts a particle at $1$ with positive probability, proving that \eqref{eq:l4-1} is a necessary condition.

For the sufficient condition, assume that \eqref{eq:l4-1} holds for a configuration $\eta$, {meaning no segment containing the origin is more than half-occupied. Note that choosing $\ell=1$ garantees that $\eta({1})=0$}. Thus, the proof is completed if we verify
that any FEP transition from $\eta$ to $\eta^{x,x+1}$ preserves condition
\eqref{eq:l4-1}. Since a rightward jump $x\mapsto x+1$ only decreases the occupancy rate of segments containing the origin,  in this case $\eta^{x,x+1}$ will also satisfy \eqref{eq:l4-1}. We now consider the case of a leftward jump  $x\mapsto x-1$, which requires in particular that $(\eta(x-1), \eta(x), \eta(x+1))=(0,1,1)$. because the FEP dynamics is conservative, such a jump only changes the occupancy rate of the segments $\{1,\dots x-1\}$ (which increases), and $\{1,\dots x\}$, which decreases. We therefore only need to check that \eqref{eq:l4-1} still holds for $\ell=x-1$. By assumption, $\eta(x)=\eta(x+1)=1$ and 
\[\sum_{i=1}^{x+1}\eta(i)\le(x+1)/2, \]
meaning in particular  that 
\[\sum_{i=1}^{x-1}\eta(i)=\sum_{i=1}^{x-2}\eta(i)=\sum_{i=1}^{x-2}\eta^{x-1,x}(i)\leq (x-3)/2.\]
adding to both sides $\eta^{x-1,x}(x-1)=1$ finally yields as wanted that \eqref{eq:l4-1} holds for $\eta^{x-1,x}$ as well.

\end{proof}

\subsection{\label{sec3.1} {Some estimates on random walks and cut points}} {To build the tools necessary to the proof of Theorem \ref{thm:key}, we first start by proving some properties on random walks. In the following, we write $\mathbb{N}=\{1,2,3,\dots\}$ and $\mathbb{N}_{0}=\{0\}\cup\mathbb{N}$.}
%Armed with the characterization of barriers from Lemma \ref{l4}, we now aim to find, after an arbitrary site $x$ \ccl{the barrier} $\widehat x$ as mentioned above.

{Fix a sequence of i.i.d. $Ber(\rho)$} random variables
$X=(X_{n})_{n\in\mathbb{N}}$. Given an initial value $Y_0$ in $\mathbb Z$, define the associated (discrete time) asymmetric random walk $Y=(Y_{n})_{n\in\mathbb{N}_{0}}$ on $\Z$ as 
\begin{equation}
Y_{n}:= Y_0 + 2\sum_{\ell=1}^{n}X_{\ell}-n,\label{eq:Yn-def}
\end{equation}
which defines a nearest-neighbor asymmetric random walk on $\mathbb{Z}$ with probability $\rho$ (resp. $1-\rho$) to go up (resp. down). 
Note that the restriction of $Y$ to $\{1,\ldots,N\}$, given $Y_0=0$, corresponds to a height function associated with the configuration $\eta$ starting from $x$.
Denote by $P_i$ and $E_i$ the law of $Y$ starting from $Y_0 = i\in\mathbb{Z}$ and its corresponding expectation, respectively.

Define, for each $m\in\mathbb{Z}$,
\begin{equation}
\tau_{m}:=\min\{n\in\mathbb{N}_0:Y_{n}=m\},\label{eq:tau-m-def}
\end{equation}
the first hitting time of $m$.
A standard estimate implies that, for all $m \ge 0$,
\begin{equation}\label{eq:hit-prob}
P_0 ( \tau_m < \infty ) = \left(\frac{\rho}{1-\rho}\right)^m.
\end{equation}
We record a lemma regarding exponential moments of the one-step hitting time; its proof is given in Appendix \ref{a:proof-exp-moment}.

%\coo{O: I gathered the proofs using standard SRW arguments in Appendix B.}

\begin{lem}\label{l2}
If $\rho < \frac12$, for all $\lambda \ge 0$ with $4\rho(1-\rho) \le e^{-2\lambda} \le 1$,
\[
E_0 \left[ e^{\lambda \tau_{-1}} \right] = \frac{e^{-\lambda}-\sqrt{e^{-2\lambda}-4\rho(1-\rho)}}{2\rho} < \infty.
\]
\end{lem}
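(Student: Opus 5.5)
The plan is the standard first-step (generating-function) analysis of the hitting time $\tau_{-1}$ for the nearest-neighbour walk $Y$ started at $0$, which moves up with probability $\rho$ and down with probability $1-\rho$. Set $\phi(\lambda):=E_0[e^{\lambda\tau_{-1}}]\in[1,\infty]$. Since $\rho<\frac12$, the walk is transient to $-\infty$, so $\tau_{-1}<\infty$ almost surely.

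First I would treat the generating function $G(s):=E_0[s^{\tau_{-1}}]$ for $s\in(0,1]$, where everything is finite, and conditioning on the first step is harmless. If the first step is down, then $\tau_{-1}=1$. If it is up, then the walk must pass from $1$ to $0$ and then from $0$ to $-1$. By the strong Markov property and translation invariance, these two passage times are independent copies of $\tau_{-1}$ under $P_0$. Hence
\[
G(s)=(1-\rho)s+\rho s\,G(s)^2,
\]
so that $G(s)=\frac{1\pm\sqrt{1-4\rho(1-\rho)s^2}}{2\rho s}$. Because $G(s)\le 1$ and $G(s)\to0$ as $s\to0$, the minus sign must be taken. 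Substituting $s=e^{-\lambda}$ with $\lambda\ge0$ and multiplying numerator and denominator by $e^{-\lambda}$ gives the claimed expression, but only in the wrong direction, namely for $E_0[e^{-\lambda\tau_{-1}}]$.

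The real task is to extend the formula to positive exponents $e^{\lambda\tau_{-1}}$, that is, to $s=e^{\lambda}\in[1,s^*]$ with $s^*:=(4\rho(1-\rho))^{-1/2}>1$. This is exactly the range $4\rho(1-\rho)\le e^{-2\lambda}$ in the statement. (Note that the displayed formula, with $e^{-\lambda}$ appearing in it, is really written in terms of $1/s$; I would rewrite it consistently as
\[
E_0[e^{\lambda\tau_{-1}}]=\frac{1-\sqrt{1-4\rho(1-\rho)e^{2\lambda}}}{2\rho e^{\lambda}}
=\frac{e^{-\lambda}-\sqrt{e^{-2\lambda}-4\rho(1-\rho)}}{2\rho},
\]
and the two forms agree.)

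To justify the extension, I would use the power series $G(s)=\sum_n P_0(\tau_{-1}=n)s^n$, which has nonnegative coefficients. The function $\frac{1-\sqrt{1-4\rho(1-\rho)s^2}}{2\rho s}$ is analytic in $|s|<s^*$, and it coincides with $G$ on $(0,1)$. By Pringsheim's theorem, or directly by Abel/monotone convergence together with the uniqueness of power series coefficients, the radius of convergence of $G$ is at least $s^*$, and the identity holds on $[0,s^*)$. At the endpoint $s=s^*$, monotone convergence gives $G(s^*)=\lim_{s\uparrow s^*}G(s)=\frac{1}{2\rho s^*}<\infty$.

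An alternative route avoids complex analysis. One uses the martingale $s^{n}z^{Y_n}$, where $z$ solves $s(\rho z+(1-\rho)z^{-1})=1$. Optional stopping at $\tau_{-1}\wedge n$ is then applied, with the smaller root $z<1$ chosen so that the martingale stays bounded on the event $\{Y\ge -1\}$, and one lets $n\to\infty$. I expect the main obstacle to be exactly this step: selecting the correct root, and justifying finiteness when $s>1$, where a priori $\phi$ could be infinite. The power-series/Pringsheim argument handles this cleanly, so I would present that one.
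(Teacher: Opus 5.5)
Your proof is correct, but it takes a different route from the paper's.

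\textbf{The paper's route.} The paper works directly at the exponent $e^{\lambda}\ge 1$ by truncation. With $H_M$ the hitting time of $\{-1,M\}$, it writes the linear recurrence $g(i)=e^{\lambda}[(1-\rho)g(i-1)+\rho g(i+1)]$ for $g(i)=E_i[e^{\lambda H_M}]$, with $g(-1)=g(M)=1$. It solves this explicitly through the two characteristic roots and lets $M\to\infty$ by monotone convergence, treating the double-root case $e^{-2\lambda}=4\rho(1-\rho)$ separately. That argument is elementary and explicit. However, manipulating the recurrence tacitly uses $g<\infty$ for the truncated problem. This is true, since the killed chain on $\{0,\dots,M-1\}$ has spectral radius $2\sqrt{\rho(1-\rho)}\cos\frac{\pi}{M+1}<e^{-\lambda}$, but it is not checked there.

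\textbf{What your route buys.} Your route avoids this issue. The functional equation $G=(1-\rho)s+\rho sG^2$ is only used for $s\le 1$, where everything is finite. Finiteness for $s\in(1,s^*]$ is then an output of the radius-of-convergence argument rather than an input. Your monotone-convergence step at $s=s^*$ plays the role of the paper's double-root case.

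Your approach also explains the threshold. Set $f(s)=\frac{1-\sqrt{1-4\rho(1-\rho)s^2}}{2\rho s}$; then $f'(s)\to+\infty$ as $s\uparrow s^*$. Hence the power series $G$, which equals $f$ on $(0,s^*)$, cannot be analytic at $s^*$. Its radius of convergence is therefore exactly $s^*$, and $E_0[e^{\lambda\tau_{-1}}]=\infty$ once $e^{-2\lambda}<4\rho(1-\rho)$. This is the remark following the lemma, which the paper states without proof.

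\textbf{Two minor points.}
\begin{itemize}
\item Pringsheim's theorem is not actually needed. The function $f$ is analytic on the whole disc $|s|<s^*$ (the singularity at $0$ is removable), so Cauchy--Taylor plus the identity theorem on $(0,1)$ already give the radius of convergence.
\item Substituting $s=e^{-\lambda}$ produces the displayed formula with $\lambda$ replaced by $-\lambda$, not the formula itself. Your subsequent identification with $s=e^{\lambda}$ is the correct one.
\end{itemize}
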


\begin{rem}
The exponential moment in the previous lemma diverges if $e^{-2\lambda} < 4\rho(1-\rho)$.
\end{rem}

\begin{figure}
\begin{tikzpicture}
\draw[very thick,densely dashed,-latex] (0,0)--(10,0); \draw[very thick,densely dashed,-latex] (0,-3)--(0,3);
\draw (-0.1,0) node[left]{$0$}; \draw (-0.1,1) node[left]{$n_\rho$};
\draw (0.75,0.75) node[above right]{\color{red} $Y$};

\fill[red] (0,1) circle (0.06);
\draw[thick,red] (0,1)--(0.25,0.75)--(0.5,0.5)--(0.75,0.75)--(1,0.5)--(1.25,0.25)--(1.5,0);
\draw[thick,red] (1.5,0)--(1.75,-0.25)--(2,-0.5)--(2.25,-0.25)--(2.5,-0.5)--(2.75,-0.75)--(3,-0.5)--(3.25,-0.25)--(3.5,0);
\draw[thick,red] (3.5,0)--(3.75,0.25)--(4,0)--(4.25,-0.25)--(4.5,-0.5)--(4.75,-0.75)--(5,-1)--(5.25,-0.75)--(5.5,-1);
\draw[thick,red] (5.5,-1)--(5.75,-1.25)--(6,-1.5)--(6.25,-1.25)--(6.5,-1)--(6.75,-1.25)--(7,-1)--(7.25,-1.25)--(7.5,-1.5);
\draw[thick,red] (7.5,-1.5)--(7.75,-1.75)--(8,-2)--(8.25,-2.25)--(8.5,-2)--(8.75,-2.25)--(9,-2.5)--(9.25,-2.25)--(9.5,-2.5)--(9.75,-2.75);

\draw[very thick] (1.5,-0.1)--(1.5,0.1); \draw (1.5,0.1) node[above]{$T_0 = \tau_0$};
\draw[very thick] (3.75,-0.1)--(3.75,0.1); \draw[,densely dotted] (3.75,0)--(3.75,0.25); \draw (3.75,-0.05) node[below]{$T_0'$};
\draw[very thick] (5,-0.1)--(5,0.1); \draw[,densely dotted] (5,0)--(5,-1); \draw (5,0.1) node[above]{$T_1$};
\draw[very thick] (5.25,-0.1)--(5.25,0.1); \draw[,densely dotted] (5.25,0)--(5.25,-0.75); \draw (5.25,-0.05) node[below]{$T_1'$};
\draw[very thick] (5.75,-0.1)--(5.75,0.1); \draw[,densely dotted] (5.75,0)--(5.75,-1.25); \draw (5.75,0.1) node[above]{$T_2$};
\draw[very thick] (6.5,-0.1)--(6.5,0.1); \draw[,densely dotted] (6.5,0)--(6.5,-1); \draw (6.5,-0.05) node[below]{$T_2'$};
\draw[very thick] (7.75,-0.1)--(7.75,0.1); \draw[thick,densely dotted] (7.75,0)--(7.75,-3); \draw (7.75,0.1) node[above]{$T_3 = \mathfrak{T}$};
\draw[thick,densely dotted] (0,-1.75)--(10,-1.75);
\draw[densely dotted] (0,-1)--(5,-1);
\draw[densely dotted] (0,-1.25)--(5.75,-1.25);
\draw (0,3.1) node[above]{space}; \draw (10.1,0) node[right]{time};
\end{tikzpicture}\caption{\label{fig3.1}An illustration of a sample path of $Y$ starting from $n_\rho = \lfloor c_\rho \log N \rfloor$ (cf. \eqref{nrho-crho-def}), the stopping times $\tau_0=T_0,T_0',T_1,T_1',\dots$, and the cut point $\mathfrak{T}=T_3$. One can observe that \eqref{eq:T-prop} is valid.}
\end{figure}

Define $T_0 = \tau_0$ and, recursively for $k\ge0$,
\begin{align}
T_k' &:= \min \{ n \ge T_k : Y_n = Y_{T_k}+1 \}, \label{eq:Tk'}\\
T_{k+1} &:= \min \left\{ n \ge T_k' : Y_n = \min_{j<n} Y_j -1 \right\}. \label{eq:Tk+1}
\end{align}
These are stopping times that take value in $\mathbb{N}_0\cup\{\infty\}$. Since $\lim_{n\to\infty}Y_n = -\infty$ a.s., given $T_k'<\infty$ we have $T_{k+1}<\infty$ almost surely. Let
\[
\mathfrak K := \min \left\{ k\ge 0: T_k'=\infty \right\} \qquad\text{and}\qquad \mathfrak{T} := T_{\mathfrak K}.
\]
Then, by \eqref{eq:Tk+1} and the fact that $T_{\mathfrak K}'=\infty$, we have
\begin{equation}\label{eq:T-prop}
\min_{n \le \mathfrak{T}} Y_n = Y_{\mathfrak{T}} = \max_{n \ge \mathfrak{T}} Y_n.
\end{equation}
From this notice that $\mathfrak{T}$ is not a stopping time since it naturally depends on the whole future. We call $\mathfrak{T}$ a \emph{cut point}. Refer to Figure \ref{fig3.1} for an illustration. 

\begin{rem}
[Interpretation of cutpoints for the FEP]
Consider a FEP on the half line, and see the random walk $Y$ as the associated height function. Starting from e.g. $Y_0=0$,  the fact that $Y_n \ge Y_{\mathfrak{T}}$ for all $n \le \mathfrak{T}$ implies that no particle on the left side of site $x:=\mathfrak{T}$ will ever cross it from left to right, while the fact that $Y_n \le Y_{\mathfrak{T}}$ for all $n \ge \mathfrak{T}$ implies that no particle on the right-hand side of  site $x:=\mathfrak{T}$ will ever cross it from right to left. In particular, the two sides of this half-infinite FEP will then evolve independently\footnote{A similar idea is used in \cite{golf} to build the golf model on $\mathbb{Z}$ for suitable initial configurations. Cut points are called separators there.}. Similar arguments can allow to build this separation in independently evolving segments in the periodic case, and will be crucial to estimate the transience time.
This heuristic will be made rigorous later in the proof of Lemma \ref{l5}, via the definition \eqref{eq:Yn-def} and the characterization in Lemma \ref{l4}.
\end{rem}
The next lemma states that $\mathfrak{T}$ has a finite exponential moment.

\begin{lem}\label{lem:exp-mom}
There exists a constant $\lambda_1=\lambda_1(\rho)>0$ such that
\[
E_0 \left[ e^{\lambda_1 \mathfrak{T}} \right] < \infty.
\]
\end{lem}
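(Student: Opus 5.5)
Since $\mathfrak T = T_{\mathfrak K} = \tau_0 + \sum_{k=0}^{\mathfrak K-1}(T_{k+1}-T_k)$, I will decompose the cut point into a geometric number of i.i.d.\ excursion blocks with exponential tails, and then apply a geometric-sum exponential moment bound.

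\textbf{Step 1: the number of blocks.} By the strong Markov property at $T_k$, the event $\{T_k'<\infty\}$ given $T_k<\infty$ has probability $P_0(\tau_1<\infty)=q:=\rho/(1-\rho)<1$ by \eqref{eq:hit-prob}, independently of the past. Hence $P_0(\mathfrak K\ge k)=q^k$, so $\mathfrak K$ is geometric. Starting from $Y_0=0$ we have $T_0=\tau_0=0$.

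\textbf{Step 2: the increments.} Condition on $T_k'<\infty$. The increment $T_{k+1}-T_k$ splits as $(T_k'-T_k)+(T_{k+1}-T_k')$. The first piece has the law of $\tau_1$ conditioned on $\tau_1<\infty$. The second piece is the time for the walk, started at $Y_{T_k}+1$, to reach the level $\min_{j<n}Y_j-1$. This running minimum is at most $Y_{T_k}$, and by definition of $T_k$ it is in fact exactly $Y_{T_k}$ for all $k$: $T_0=0$ gives the minimum $0$, and each $T_{k+1}$ sets a new minimum. So $T_{k+1}-T_k'$ is the hitting time of $-2$ from level $0$ (relative), a sum of two independent copies of $\tau_{-1}$, which has an exponential moment by Lemma \ref{l2}.

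For the conditioned piece, the identity
\[
E_0[e^{\lambda\tau_1}\mid\tau_1<\infty]=q^{-1}E_0[e^{\lambda\tau_1}\mathbf 1_{\tau_1<\infty}]
\]
holds. The right-hand side is finite for small $\lambda$ by the analogous explicit generating function $\bigl(e^{-\lambda}-\sqrt{e^{-2\lambda}-4\rho(1-\rho)}\bigr)/(2(1-\rho))$; alternatively, Doob's $h$-transform shows that the conditioned walk is the walk with up/down probabilities swapped, so Lemma \ref{l2} applies by symmetry. Let $\phi(\lambda)$ denote the exponential moment of a single increment $T_{k+1}-T_k$ given $T_k'<\infty$. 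Then $\phi(\lambda)<\infty$ for small $\lambda$ and $\phi(\lambda)\to1$ as $\lambda\downarrow0$, by dominated convergence.

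\textbf{Step 3: conclusion.} By the strong Markov property,
\[
E_0[e^{\lambda\mathfrak T}]=\sum_{k\ge0}P(\mathfrak K=k)\,\phi(\lambda)^k=(1-q)\sum_k (q\phi(\lambda))^k .
\]
This sum is finite once $q\phi(\lambda)<1$, which is achievable by choosing $\lambda_1$ small since $\phi(0^+)=1$. Here the independence used is that, conditionally on $\{\mathfrak K=k\}$, the blocks are i.i.d.\ with the conditioned law on $\{T_j'<\infty\}$ for $j<k$.

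\textbf{Main obstacle.} The delicate point is that conditioning on $\mathfrak K=k$ also imposes $T_k'=\infty$. This event concerns only the future after $T_k$ and is independent of the blocks before it, so the factorization is valid, but it must be checked carefully via the strong Markov property at each $T_k$.
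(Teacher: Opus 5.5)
Steps 1 and 3 and the block decomposition are the same as in the paper and are fine, but Step 2 contains a false claim. It is true that $Y_{T_k}=\min_{j\le T_k}Y_j$ (that is what "$T_0=0$" and "each $T_{k+1}$ sets a new minimum" give you). However, $T_{k+1}$ is defined through $\min_{j<n}Y_j$ for $n\ge T_k'$. Before first reaching $Y_{T_k}+1$, the walk can go below $Y_{T_k}$: take increments $-1,-1,+1,+1,+1$ after $T_k$, or look at the excursion between $T_0$ and $T_0'$ in Figure~\ref{fig3.1}.

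Set $D_k:=Y_{T_k}-\min_{T_k\le n\le T_k'}Y_n\ge 0$. Conditionally on $\mathcal F_{T_k'}$, the time $T_{k+1}-T_k'$ is the hitting time of a level at distance $2+D_k$ below the current position, not $2$. Moreover, $D_k$ is a function of the same excursion that produces $T_k'-T_k$. So the two pieces of a block are dependent, and the second one is not two copies of $\tau_{-1}$. Your product formula only gives a lower bound on $\phi(\lambda)$. Nothing in your argument shows $\phi(\lambda_0)<\infty$ for some $\lambda_0>0$, and that is the one nontrivial input your Step 3 needs.

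Controlling this dependence is exactly what the paper's proof does. The excursion from $T_k$ to $T_k'$ ends one level above its start, so at most $(T_k'-T_k-1)/2$ of its steps go down, and $D_k\le (T_k'-T_k-1)/2$. Let $M(\lambda):=E_0[e^{\lambda\tau_{-1}}]\ge 1$. Apply the strong Markov property at $T_0'$, then $P_0(T_0'=2k+1)=C_k\rho^{k+1}(1-\rho)^k$ and $C_k\le 4^k$:
\begin{align*}
E_0\bigl[e^{\lambda T_1}\mathbf{1}_{\{T_0'<\infty\}}\bigr]
&=E_0\bigl[e^{\lambda T_0'}M(\lambda)^{2+D_0}\mathbf{1}_{\{T_0'<\infty\}}\bigr]
\le M(\lambda)^2\sum_{k\ge0}e^{\lambda(2k+1)}M(\lambda)^{k}P_0(T_0'=2k+1)\\
&\le \rho\, e^{\lambda}M(\lambda)^2\sum_{k\ge0}\bigl(4\rho(1-\rho)e^{2\lambda}M(\lambda)\bigr)^k .
\end{align*}
This is finite as soon as $4\rho(1-\rho)e^{2\lambda}M(\lambda)<1$, which is exactly condition \eqref{eq:2nd-cond}. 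That condition holds for small $\lambda>0$ because $4\rho(1-\rho)<1$ and $M(\lambda)\to 1$ by Lemma~\ref{l2}. Once finiteness for some $\lambda_0>0$ is established this way, your continuity step is valid: dominated convergence gives $q\,\phi(\lambda)\to q<1$. That step is a lighter substitute for the paper's explicit Catalan generating-function computation showing $E_0[e^{\lambda T_1}\mathbf{1}_{\{T_0'<\infty\}}]<1$.
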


\begin{proof}
The strong Markov property and \eqref{eq:hit-prob} imply that, under $P_0$, the random variable $\mathfrak K$ is geometrically distributed with parameter
\[
P_0 \left( T_0' < \infty \right) = \frac{\rho}{1-\rho} < 1.
\]
In turn, for $\lambda>0$, we can compute 
\begin{align*}
E_0 \left[ e^{\lambda \mathfrak{T}} \right] = \sum_{k=0}^\infty E_0 \left[ e^{\lambda T_k} {\bf 1}_{\mathfrak K=k} \right] & = \sum_{k=0}^\infty E_0 \left[ {\bf 1}_{T_k'=\infty} \prod_{j=0}^{k-1} e^{\lambda(T_{j+1}-T_j)} {\bf 1}_{T_j' < \infty} \right] \\
& = \sum_{k=0}^\infty \left( 1 - \frac{\rho}{1-\rho} \right) E_0 \left[ e^{\lambda T_1} {\bf 1}_{T_0' < \infty} \right]^k ,
\end{align*}
where the last line follows from the strong Markov property applied recursively at stopping times $T_k, T_{k-1}, \dots, T_0$. Thus, it suffices to find $\lambda>0$ such that
\[
E_0 \left[ e^{\lambda T_1} {\bf 1}_{T_0' < \infty} \right] < 1.
\]
Notice that $T_0'$, if finite, is odd. Starting from $Y_0=0$, since $Y_{T_0'}=1$, up until $T_0'$ exactly half the steps are spent going down, therefore given $T_0' < \infty$ we have
\[
\min_{0 \le n \le T_0'}Y_n \ge - \frac{T_0'-1}{2}, \qquad \text{thus} \qquad Y_{T_1} - Y_{T_0'} \ge - \frac{T_0'}{2} -\frac32.
\]
Thus, via the strong Markov property, we may calculate
\begin{align*}
E_0 \left[ e^{\lambda T_1} {\bf 1}_{T_0' < \infty} \right] & \le \sum_{k=0}^\infty E_0 \left[ e^{\lambda (2k+1)} {\bf 1}_{T_0'=2k+1} E_0 \left[ e^{\lambda \left( T_1 - T_0' \right) } \Big| \mathcal{F}_{T_0'} \right] \right] \\
& \le \sum_{k=0}^\infty E_0 \left[ e^{\lambda (2k+1)} {\bf 1}_{T_0'=2k+1} \right] E_0 \left[ e^{\lambda \tau_{-k-2}} \right] \\
& = \sum_{k=0}^\infty e^{\lambda(2k+1)} P_0 \left( T_0'=2k+1 \right) E_0 \left[ e^{\lambda \tau_{-1}} \right]^{k+2}.
\end{align*}
A combinatorial argument implies that $P_0(T_0'=2k+1) = C_k \rho^{k+1}(1-\rho)^k$ where $C_k$ is the $k$-th Catalan number. Thus,
\begin{align*}
E_0 \left[ e^{\lambda T_1} {\bf 1}_{T_0' < \infty} \right] & \le \sum_{k=0}^\infty e^{\lambda(2k+1)} E_0 \left[ e^{\lambda \tau_{-1}} \right]^{k+2} C_k \rho^{k+1} (1-\rho)^k \\
& = \rho e^\lambda E_0 \left[ e^{\lambda \tau_{-1}} \right]^2 \sum_{k=0}^\infty C_k \left( \rho (1-\rho) e^{2\lambda} E_0 \left[ e^{\lambda \tau_{-1}} \right] \right)^k.
\end{align*}
If $\lambda>0$ satisfies 
\begin{equation}\label{eq:2nd-cond}
g_\rho (\lambda) := \rho (1-\rho) e^{2\lambda} E_0 \left[ e^{\lambda \tau_{-1}} \right] < \frac14,
\end{equation}
via the generating function for Catalan numbers, we obtain that
\begin{align*}
E_0 \left[ e^{\lambda T_1} {\bf 1}_{T_0' < \infty} \right] & \le \rho e^\lambda E_0 \left[ e^{\lambda \tau_{-1}} \right]^2 \frac{1-\sqrt{1-4 \rho (1-\rho) e^{2\lambda}E_0 [ e^{\lambda \tau_{-1}} ]}}{2 \rho (1-\rho) e^{2\lambda} E_0 [ e^{\lambda \tau_{-1}} ] } <  \frac{E_0 \left[ e^{\lambda \tau_{-1}} \right]}{2 (1-\rho) e^{\lambda} } < 1,
\end{align*}
where the last inequality follows from Lemma \ref{l2}. Therefore, it suffices to find $\lambda_1>0$ that satisfies \eqref{eq:2nd-cond}.
Taking $\lambda = 0$ in the left-hand side of \eqref{eq:2nd-cond} gives $g_\rho(0) = \rho(1-\rho) < \frac14$.
Thus, there exists such constant $\lambda_1$ since $g_\rho$ is continuous.
\end{proof}

\begin{rem}\label{rem:CUB-div}
We remark that $\lambda_1(\rho) \downarrow 0$ as $\rho \uparrow \frac12$, Indeed, $\lambda_1$ satisfies \eqref{eq:2nd-cond} thus $E_0[e^{\lambda_1 \tau_{-1}}] < \infty$, which then implies via Lemma \ref{l2} that $e^{-2\lambda_1} \ge 4 \rho (1-\rho)$.
\end{rem}

The following immediate corollary will be used in the sequel. Define
\begin{equation}\label{nrho-crho-def}
n_\rho := \lfloor c_\rho \log N \rfloor, \qquad \text{where} \qquad c_\rho := \frac2{\log \frac{1-\rho}{\rho}} .
\end{equation}
The constant $n_\rho$ serves as a starting point of $Y$, which is introduced to overkill the particles which may potentially come over from the left side of $x$ (i.e., from $x-1,x-2,\dots$).

\begin{cor}\label{c0}
There exist constants $\lambda_2 = \lambda_2(\rho), \lambda_3 = \lambda_3(\rho) > 0$ such that
\[
P_{n_\rho} ( \mathfrak{T} > \lambda_2 \log N ) \le \frac{\lambda_3}{N^2}.
\]
\end{cor}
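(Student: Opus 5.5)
Starting from $Y_0=n_\rho$, the definition of $T_0=\tau_0$ and of the subsequent stopping times still makes sense: the walk first descends to level $0$, and from then on the construction is exactly the one analysed in Lemma \ref{lem:exp-mom}. The plan is therefore to split
\[
\mathfrak{T} = \tau_0 + (\mathfrak{T}-\tau_0),
\]
where, by the strong Markov property at the a.s.\ finite stopping time $\tau_0$, the post-$\tau_0$ part $\mathfrak{T}-\tau_0$ is distributed as $\mathfrak{T}$ under $P_0$ and is independent of $\tau_0$. (The cut-point property \eqref{eq:T-prop} from the starting level $n_\rho$ requires $\min_{n\le\mathfrak T}Y_n=Y_{\mathfrak T}$, which is ensured because $T_0$ is the first hitting of $0$ and every later $T_k$ is a new running minimum.) In other words, the law of $(Y_{\tau_0+n})_{n\ge0}$ together with the derived times is the $P_0$-law, so $\mathfrak T$ under $P_{n_\rho}$ equals in law $\tau_0^{(n_\rho)}+\mathfrak T^{(0)}$ with the two summands independent.

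Next, I will bound the exponential moment of each piece. By the strong Markov property, $\tau_0$ under $P_{n_\rho}$ is a sum of $n_\rho$ i.i.d.\ copies of $\tau_{-1}$ under $P_0$. Fix $\lambda\in(0,\lambda_1]$ small enough that $e^{-2\lambda}\ge 4\rho(1-\rho)$, so that Lemma \ref{l2} applies and $M_1:=E_0[e^{\lambda\tau_{-1}}]<\infty$; Lemma \ref{lem:exp-mom} then gives $M_2:=E_0[e^{\lambda\mathfrak T}]<\infty$, using $\lambda\le\lambda_1$ and monotonicity. Hence
\[
E_{n_\rho}[e^{\lambda\mathfrak T}] = M_1^{n_\rho} M_2 \le M_2\, N^{c_\rho\log M_1}.
\]

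Finally, Markov's inequality gives
\[
P_{n_\rho}(\mathfrak T>\lambda_2\log N)\le M_2\,N^{c_\rho\log M_1-\lambda\lambda_2},
\]
and choosing $\lambda_2:=(2+c_\rho\log M_1)/\lambda$ and $\lambda_3:=M_2$ yields the claim. The only delicate point is the first step: one must check that the cut-point construction started from $n_\rho$ really decomposes as a hitting time of $0$ plus an independent $P_0$-copy of $\mathfrak T$, so that Lemma \ref{lem:exp-mom} applies. Beyond that there is no obstacle, since $\lambda$ only needs to be small enough for both exponential moments to be finite.
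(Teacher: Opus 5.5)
Your proposal is correct and is essentially the paper's proof: the strong Markov property at $\tau_0$ factors $E_{n_\rho}[e^{\lambda\mathfrak T}]=E_0[e^{\lambda\tau_{-1}}]^{n_\rho}\,E_0[e^{\lambda\mathfrak T}]$, and Markov's inequality then finishes with $\lambda_2$ chosen to make the exponent $-2$ and $\lambda_3=E_0[e^{\lambda\mathfrak T}]$ (the paper simply takes $\lambda=\lambda_1$, which already satisfies $e^{-2\lambda_1}\ge 4\rho(1-\rho)$). The step you flag as delicate is immediate: before $\tau_0$ the walk stays at levels $\ge 1$, so those values never affect the running minimum in \eqref{eq:Tk+1}, and hence $\mathfrak T-\tau_0$ is a functional of the post-$\tau_0$ path with the $P_0$-law.
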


\begin{proof}
By Lemma \ref{lem:exp-mom}, the Markov inequality, and the strong Markov property at $\tau_0$, for $\lambda_1>0$ which was introduced in Lemma \ref{lem:exp-mom},
\[
P_{n_\rho} ( \mathfrak{T} > \lambda_2 \log N ) \le e^{-\lambda_1 \lambda_2 \log N} E_{n_\rho} \left[ e^{\lambda_1 \mathfrak{T}} \right] = N^{-\lambda_1 \lambda_2} E_{n_\rho} \left[ e^{\lambda_1 \tau_0} \right] E_0  \left[ e^{\lambda_1 \mathfrak{T}} \right] .
\]
Since $\lambda_1$ satisfies \eqref{eq:2nd-cond}, we have $4\rho(1-\rho) < e^{-2\lambda_1}$, thus by Lemma \ref{l2} and the strong Markov property applied $n_\rho$ times,
\[
E_{n_\rho} \left[ e^{\lambda_1 \tau_0} \right] = E_0 \left[ e^{\lambda_1 \tau_{-1}} \right]^{n_\rho} \le  N^{c_\rho \log E_0 \left[ e^{\lambda_1 \tau_{-1}} \right] } .
\]
Thus, by choosing $\lambda_3 = E_0 [e^{\lambda_1 \mathfrak{T}}] \in (0,\infty)$ and $\lambda_2 >0$ such that
\begin{equation}\label{eq:lam2-def}
-\lambda_1 \lambda_2 + c_\rho \log E_0 \left[ e^{\lambda_1 \tau_{-1}} \right] \le -2,
\end{equation}
the inequality holds as desired.
\end{proof}

Before moving on, we record another corollary which is necessary in the
next subsection. Cramér's theorem implies that
\begin{equation}\label{eq:LDP}
P(Y_n-Y_0\ge-\gamma n) \le e^{-f_{\rho}(\gamma)n} \qquad \text{for all} \quad \gamma\in (-1,1-2\rho), \quad n\in\mathbb{N}, 
\end{equation}
where
\[
f_\rho(\gamma) = \frac{1-\gamma}2 \log \frac{1-\gamma}{2\rho} + \frac{1+\gamma}2 \log \frac{1+\gamma}{2(1-\rho)}.
\]

\begin{cor}
\label{c1} For all sufficiently large $N$,
\[
P_{n_\rho}(Y_{N}+c_{\rho}\log N>Y_{\mathfrak{T}})\le\frac{1+\lambda_3}{N^{2}}.
\]
\end{cor}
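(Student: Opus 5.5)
We split according to whether the cut point $\mathfrak{T}$ occurs early. Set $A:=\{\mathfrak{T}\le \lambda_2\log N\}$. By Corollary \ref{c0}, $P_{n_\rho}(A^c)\le \lambda_3/N^2$, so it remains to show
\[
P_{n_\rho}\big(A\cap\{Y_N+c_\rho\log N>Y_{\mathfrak{T}}\}\big)\le \frac1{N^2}
\]
for $N$ large.

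On $A$ we have $\mathfrak{T}\le \lambda_2\log N<N$. By \eqref{eq:T-prop}, $Y_{\mathfrak{T}}=\min_{n\le\mathfrak{T}}Y_n$. Since the walk is nearest-neighbour and starts at $Y_0=n_\rho$, this gives the deterministic bound
\[
Y_{\mathfrak{T}}\ge n_\rho-\mathfrak{T}\ge n_\rho-\lambda_2\log N.
\]
Hence on $A\cap\{Y_N+c_\rho\log N>Y_{\mathfrak{T}}\}$,
\[
Y_N-Y_0>-(\lambda_2+c_\rho)\log N.
\]
This event involves only the i.i.d. increments up to time $N$, so the non-stopping-time nature of $\mathfrak{T}$ causes no difficulty.

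We bound it with the Cramér estimate \eqref{eq:LDP}. For $N$ large, $(\lambda_2+c_\rho)\log N\le \gamma N$ for any fixed small $\gamma\in(0,1-2\rho)$, so
\[
P\big(Y_N-Y_0\ge-\gamma N\big)\le e^{-f_\rho(\gamma)N}.
\]
Here $f_\rho(\gamma)>0$, because $\gamma<1-2\rho$, which is the mean drift magnitude (the mean of $Y_N-Y_0$ is $-(1-2\rho)N$). Thus $e^{-f_\rho(\gamma)N}\le N^{-2}$ for $N$ large, and summing the two contributions gives the claimed $(1+\lambda_3)/N^2$.

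There is no real obstacle. The only points to check are that $Y_{\mathfrak{T}}$ is bounded below without using the future (via the minimality property in \eqref{eq:T-prop}, which holds pathwise), and that $\gamma$ can be chosen inside the admissible range of \eqref{eq:LDP} with $f_\rho(\gamma)>0$.
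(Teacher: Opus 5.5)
Your proof is correct and follows essentially the same route as the paper: restrict to $\{\mathfrak{T}\le\lambda_2\log N\}$ using Corollary \ref{c0}, then conclude with the Cramér bound \eqref{eq:LDP}. The one difference is that the paper takes a union bound over the possible values $k\le\lambda_2\log N$ of $\mathfrak{T}$ and applies \eqref{eq:LDP} to each $Y_N-Y_k$ with $\gamma=\frac{1-2\rho}{2}$, whereas you use the pathwise bound $Y_{\mathfrak{T}}\ge Y_0-\mathfrak{T}$ to reduce to the single event $\{Y_N-Y_0>-(\lambda_2+c_\rho)\log N\}$; this is slightly cleaner, and the minimality property \eqref{eq:T-prop} is not actually needed for that bound.
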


\begin{proof}
By Corollary \ref{c0}, $\mathfrak{T} \le \lambda_2 \log N$
except with probability at most $\frac{\lambda_3}{N^{2}}$. Thus, for all sufficiently large $N$ (such that $c_\rho\log N\le\frac{1-2\rho}2(N-\lambda_2\log N)$),
\begin{align*}
P_{n_\rho} (Y_{N}+c_{\rho}\log N>Y_{\mathfrak{T}}) & \le \sum_{k=0}^{\lfloor \lambda_2 \log N \rfloor} P (Y_N - Y_k > - c_\rho \log N )+\frac{\lambda_3}{N^{2}} \\
& \le \sum_{k=0}^{\lfloor \lambda_2 \log N \rfloor}  P\left( Y_{N-k }-Y_0 > -\frac{1-2\rho}{2}(N-k)\right) + \frac{\lambda_3}{N^{2}}.
\end{align*}
Applying \eqref{eq:LDP} with $\gamma=\frac{1-2\rho}{2}$ (for which $f(\gamma)>0$) concludes the proof.
\end{proof}

\subsection{\label{sec3.2}Proof of Theorem \ref{thm:key}}

Recall from \eqref{eq:nu-rho-def} that $\nu_{\rho}^{N}=\otimes_{x\in\mathbb{T}_{N}}\nu_{x}$
with $\nu_{x}(0)=1-\rho$ and $\nu_{x}(1)=\rho$.
For any $x\in\mathbb{T}_{N}$, consider
the following sequence of random variables
\begin{equation}
\eta_{x+1},\eta_{x+2},\dots,\eta_{x},X_{N+1},X_{N+2},\dots\label{eq:iid-x}
\end{equation}
where the first $N$ random variables are the site marginals of $\eta \sim \nu_\rho^N$ encountered by crossing the full ring from
$x+1\in\mathbb{T}_{N}$ to $x\in\mathbb{T}_{N}$, and the remaining $X_{i}$, $i\ge N+1$ are i.i.d. $\rho$-Bernoulli
random variables that are independent of $\eta$. Then,
following the construction given in Section \ref{sec3.1}
with respect to \eqref{eq:iid-x}, under the initial condition $Y_0 = n_{\rho}$ (cf. \eqref{nrho-crho-def}), denote by $\mathfrak{T}_{x}$ the associated
cut point $\mathfrak T$ and on the event $\{\mathfrak{T}_{x}\leq N\}$ define the (random) site
\[
\widehat{x}:=x+\mathfrak{T}_{x}\in\mathbb{T}_{N}.
\]

\begin{lem}
\label{l5}There exists a constant $\lambda_4 = \lambda_4(\rho)>0$ such that for each
$x\in\mathbb{T}_{N}$,
\[
\nu_{\rho}^{N}(\eta\in\Sigma_{N}:\widehat{x}\in\mathscr{B}(\eta))\ge1-\frac{\lambda_4}{N^{2}}.
\]
\end{lem}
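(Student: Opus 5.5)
The plan is to read off the barrier property directly from the path of $Y$ by a two-sided, periodic version of Lemma \ref{l4}. Write $\mathfrak T=\mathfrak T_x$, and identify site $x+n$ with step $n$ of the walk, for $1\le n\le N$. Then the number of particles in a block of sites $\{x+a+1,\dots,x+b\}$ is $\frac12(Y_b-Y_a+b-a)$. For blocks that wrap around the ring, the count picks up a term $Y_N-Y_m$ from the end of the ring and a term $Y_{m'}-Y_0$ from its start. Following Lemma \ref{l4}, I introduce two conditions on a ring configuration $\eta$:
\begin{itemize}
\item[(R)] every block $\{\widehat x+1,\dots,\widehat x+\ell\}$ with $1\le \ell\le N-1$ contains at most $\ell/2$ particles;
\item[(L)] every block $\{\widehat x-\ell+1,\dots,\widehat x\}$ with $1\le \ell\le N-1$ contains at most $\ell/2$ particles.
\end{itemize}
Taking $\ell=1$ in (R) and (L) forces $\eta(\widehat x+1)=\eta(\widehat x)=0$.

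The first step is deterministic: the set of configurations satisfying (R) and (L) is invariant under every FEP jump on $\mathbb T_N$. The argument is the one in the sufficiency part of Lemma \ref{l4}, run on each side. A jump away from $\widehat x$ or $\widehat x+1$ can only lower the occupancy of the blocks in (R) and (L). A jump towards them needs a pair of adjacent particles, say a leftward jump $z\to z-1$ with $\eta(z)=\eta(z+1)=1$. Such a jump changes only one block of the (R) family, namely $\{\widehat x+1,\dots,z-1\}$. The bound on $\{\widehat x+1,\dots,z+1\}$, after removing the two particles at $z,z+1$, gives exactly the slack needed for that block, as in the computation of Lemma \ref{l4}. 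Rightward jumps are handled symmetrically for the (L) family. A block containing both endpoints of the jump keeps its count, and because $\widehat x$ and $\widehat x+1$ stay empty, no jump can cross the pair $\{\widehat x,\widehat x+1\}$. It follows that (R) and (L) imply $\widehat x\in\mathscr B(\eta)$.

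The second step translates (R) and (L) into events for $Y$, started at $Y_0=n_\rho$.

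For (R), the non-wrapping blocks (indices $\mathfrak T+1,\dots,\mathfrak T+\ell\le N$) are handled by $Y_n\le Y_{\mathfrak T}$ for $n\ge\mathfrak T$, which is \eqref{eq:T-prop}. Wrapping blocks require
\[
(Y_N-Y_{\mathfrak T})+(Y_m-Y_0)\le 0\qquad\text{for } m<\mathfrak T .
\]
Corollary \ref{c1} gives $Y_N-Y_{\mathfrak T}\le -c_\rho\log N$ except with probability $O(N^{-2})$. It then suffices that $\max_{m\ge0}(Y_m-Y_0)<c_\rho\log N$, which by \eqref{eq:hit-prob} fails with probability at most $(\rho/(1-\rho))^{\lceil c_\rho\log N\rceil}\le N^{-2}$; this is exactly why $c_\rho$ was chosen as in \eqref{nrho-crho-def}.

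For (L), the non-wrapping blocks are handled by $Y_n\ge Y_{\mathfrak T}$ for $n\le\mathfrak T$. Wrapping blocks require
\[
(Y_{\mathfrak T}-Y_0)+(Y_N-Y_m)\le 0\qquad\text{for } m\ge\mathfrak T .
\]
Since $\mathfrak T\ge\tau_0$, we have $Y_{\mathfrak T}\le 0$, so $Y_{\mathfrak T}-Y_0\le -n_\rho$. It is therefore enough that $\max_{0\le j\le N}(Y_N-Y_{N-j})\le n_\rho$.

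The main obstacle is this last uniform estimate on the end of the walk, since a naive union bound over $m$ loses a factor $N$. I would use time reversal: $(Y_N-Y_{N-j})_{0\le j\le N}$ has the law of the same $\rho$-walk started at $0$. By \eqref{eq:hit-prob}, its running maximum exceeds $n_\rho$ with probability at most $(\rho/(1-\rho))^{n_\rho+1}=O(N^{-2})$. If the integer rounding in $n_\rho$ makes this slightly weaker than needed, I would enlarge $c_\rho$ by a constant factor or add one to the threshold; neither change affects the rest of the argument.

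Finally, on the event $\{\mathfrak T_x\le N\}$ the site $\widehat x$ is well defined, and Corollary \ref{c0} shows this event has probability $1-O(N^{-2})$. Summing the four exceptional probabilities gives the bound $\lambda_4/N^2$.
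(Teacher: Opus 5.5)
Your proposal is correct and follows the paper's proof: the same four conditions on $Y$ (two from the cut-point property \eqref{eq:T-prop}, the wrapping right-blocks from Corollary \ref{c1} together with \eqref{eq:hit-prob}, and the wrapping left-blocks from $Y_{\mathfrak T}\le 0$ together with \eqref{eq:hit-prob} after time reversal). The only differences are cosmetic: you rerun the sufficiency argument of Lemma \ref{l4} directly on the ring instead of coupling with the FEP on the cut line $\mathbb L_N^x$, you spell out the time reversal and the event $\{\mathfrak T_x\le N\}$ that the paper leaves implicit, and your rounding worry is unnecessary since $n_\rho+1>c_\rho\log N$ already gives $(\rho/(1-\rho))^{n_\rho+1}\le N^{-2}$.
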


\begin{proof}
Fix $\eta\in\Sigma_{N}$ and $x\in\mathbb{T}_{N}$. Consider the one-dimensional
line
\[
\mathbb L_N^x := \{\widehat{x}+1,\widehat{x}+2,\dots,\widehat{x}\}
\]
obtained
by disconnecting the edge between $\widehat{x}$ and $\widehat{x}+1$
in $\mathbb{T}_{N}$.
By the canonical coupling between two FEP dynamics on $\mathbb T_N$ and $\mathbb L_N^x$,
the event that $\widehat{x}$ is a barrier of
$\eta$ follows from the event that FEP on this line
starting from $\eta$ cannot place a particle on either $\widehat{x}+1$
or $\widehat{x}$ at all times. By Lemma \ref{l4}, the latter event is equivalent
to
\[
2\sum_{i=1}^{\ell}\eta_{\widehat{x}+i}\le\ell\qquad\text{and}\qquad2\sum_{i=1}^{\ell}\eta_{\widehat{x}+1-i}\le\ell\qquad\text{for all}\quad1\le\ell\le N.
\]
In the notation of the process $Y=(Y_{\ell})_{\ell\in\mathbb{N}_{0}}$
defined in \eqref{eq:Yn-def}, with $Y_0 := n_\rho$, this is equivalent to:
\begin{itemize}
\item $Y_{\mathfrak{T}_{x}+\ell}-Y_{\mathfrak{T}_{x}}\le0$ for all $1\le\ell\le N-\mathfrak{T}_{x}$;
\item $(Y_{N}-Y_{\mathfrak{T}_{x}})+(Y_{\ell}-Y_{0})\le0$ for all $1\le\ell\le\mathfrak{T}_{x}$;
\item $Y_{\mathfrak{T}_{x}}-Y_{\mathfrak{T}_{x}-\ell}\le0$ for all $1\le\ell\le\mathfrak{T}_{x}$;
\item $(Y_{\mathfrak{T}_{x}}-Y_{0})+(Y_{N}-Y_{N-\ell})\le0$ for all $1\le\ell\le N-\mathfrak{T}_{x}$.
\end{itemize}
The first and third conditions follow from the fact that $\mathfrak{T}_{x}$
is the cut point (see \eqref{eq:T-prop}):
\[
\begin{cases}
Y_{\mathfrak{T}_{x}+\ell}\le Y_{\mathfrak{T}_{x}} & \text{for all}\quad1\le\ell\le N-\mathfrak{T}_{x},\\
Y_{\mathfrak{T}_{x}-\ell}\ge Y_{\mathfrak{T}_{x}} & \text{for all}\quad1\le\ell\le\mathfrak{T}_{x}.
\end{cases}
\]
The second condition holds with a probability tending to $1$ due to Corollary \ref{c1} and \eqref{eq:hit-prob}:
\[
\nu_{\rho}^{N}(Y_{N}-Y_{\mathfrak{T}_{x}}\le-c_{\rho}\log N)\ge1-\frac{1+\lambda_3}{N^{2}},
\]
and
\begin{equation}
\nu_{\rho}^{N}(Y_{\ell}-Y_{0}\le c_{\rho}\log N\enspace\text{for all}\enspace \ell\ge 1 )\ge1-\left(\frac{\rho}{1-\rho}\right)^{c_{\rho}\log N} = 1-\frac{1}{N^{2}}.\label{eq:l5-1}
\end{equation}
The fourth condition holds with a probability tending to $1$ due to \eqref{eq:hit-prob} and $Y_{\mathfrak{T}_{x}}\le0$:
\begin{align*}
\nu_{\rho}^{N}(Y_{N}-Y_{N-\ell} & \le Y_{0}\enspace\text{for all}\enspace1\le\ell\le N-\mathfrak{T}_{x})\\
 & \ge\nu_{\rho}^{N}(Y_{\ell}-Y_{0}\le c_{\rho}\log N\enspace\text{for all}\enspace\ell\ge1)\ge1-\frac{1}{N^{2}}.
\end{align*}
Therefore, we may lower bound the probability that $\widehat{x}$
is a barrier with
\[
\nu_{\rho}^{N}(\eta\in\Sigma_{N}:\widehat{x}\in\mathscr{B}(\eta))\ge1-\frac{3+\lambda_3}{N^{2}},
\]
which proves the lemma with $\lambda_4:=3+\lambda_3$.
\end{proof}
Now, we are ready to prove Theorem \ref{thm:key}.
\begin{proof}[Proof of Theorem \ref{thm:key}]
 Applying Lemma \ref{l5} for all $x\in\mathbb{T}_{N}$,
\[
\nu_{\rho}^{N}(\eta:\widehat{x}\in\mathscr{B}(\eta)\enspace\text{for all}\enspace x\in\mathbb{T}_{N})\ge1-\frac{\lambda_4}{N}.
\]
In addition, applying Corollary \ref{c0} for each $x$,
\[
\nu_{\rho}^{N}(\mathfrak{T}_{x}\le \lambda_2 \log N\enspace\text{for all}\enspace x\in\mathbb{T}_{N})\ge1-\frac{\lambda_3}{N}.
\]
Thus, in the intersection of these two events, $\{\widehat{x}:x\in\mathbb{T}_{N}\}$
is a subset of $\mathscr{B}(\eta)$ and is $\lambda_2 \log N$-nice, since
for any $\widehat{x}$,
\[
d\,\left(\widehat{x},\widehat{\widehat{x}}\right)=\mathfrak{T}_{\widehat{x}}\le \lambda_2 \log N.
\]
Therefore,
\[
\nu_{\rho}^{N}(\eta\in\Sigma_{N}:\mathscr{B}(\eta)\enspace\text{is}\enspace \lambda_2 \log N\text{-nice}) \ge 1-\frac{\lambda_4+\lambda_3}{N}\xrightarrow{N\to\infty}1,
\]
which concludes the proof of Theorem \ref{thm:key} with
\begin{equation}\label{eq:Lam-def}
\Lambda (\rho) := \lambda_2 (\rho).
\end{equation}
\end{proof}

\section{\label{sec4}Proof of Theorem \ref{thm:main}}

\subsection{Mapping to the SSEP with Traps (SWT)}
\label{sec:SWTFEPshort}
{Following \cite[Section 1.1]{EM24}, we now describe formally the mapping from the SSEP to the SWT, illustrated in Fig. \ref{fig4.1}, and defined in details in Appendix \ref{secA}, which will be important in what follows.

\medskip

\emph{Static mapping:} Any FEP configuration $\eta$ on $\T_N$ with $K$ particles can be associated with a SWT configuration $\xi=(\xi(i))_{i\in\T_K}\in \Z_{\leq 1}^{\T_K},$ where each site $i$ of the SWT is either occupied by a particle ($\xi(i)=1$), or contains a trap of depth $k\geq 0$ ($\xi(i)=-k$), with a trap of depth $0$ being interpreted as a classical SSEP empty site. This correspondance is defined by keeping track of the positions $X_1<\dots < X_K< X_{K+1}:=X_1$ of the particles in $\eta$, and setting 
\begin{equation}
\label{eq:DEFSWTshort}
\xi(i)=2-(X_{i+1}-X_i),
\end{equation}
meaning that site $i$ in $\xi$ is occupied by a particle if particles $i$ and $i+1$ are neighbors in $\eta$, and it is a trap of depth $k\geq 0$ if $k+1$ empty sites separate particles $i$ and $i+1$ in $\eta$. Note that this SWT configuration must satisfy 
\begin{equation}
\label{eq:mappedGammaNK}
\sum_{i\in \T_K} \xi(i)=2K-N.
\end{equation}

\medskip

\emph{Dynamic mapping:} This mapping can now follow a FEP trajectory $\{\eta_t,\; t\geq 0\}$, by keeping track of the trajectories $X_1(t),\dots X_K(t), X_{K+1}(t):=X_1(t)$ of the FEP's particles, keeping in mind that order is preserved because the FEP particles cannot cross eachother. Doing so and applying \eqref{eq:DEFSWTshort} at time $t$ defines the mapped SWT trajectory $\{\xi_t,\; t\geq 0\}$, which constructs a Markov process in which
\begin{itemize}
\item Particles jump at rate one to any neighboring site which is not already occupied by a particle. 
\item When a particle jumps into a trap, the particle is destroyed, and the trap loses one depth.
\end{itemize}
Clearly, the SWT displays the same active/inactive transition than the FEP: initially, particles and traps can coexist (transient period), however after the transience time, either only particles or only traps subsist depending on the initial sign of $\sum_{i\in \T_K} \xi(i)$.
}

\subsection{\label{sec4.1}Upper bound}

{With this mapping, we are now in a position to prove the upper bound} of Theorem \ref{thm:main}.
Namely, our objective is to verify that
\begin{equation}
\lim_{N\to\infty}\mathbb{P}_{\nu_{\rho}}^{N}\left[\mathcal{T}_{{\rm tr}}^N<C_{\rm UB}\log^{3}N\right]=1.\label{eq:main-UB}
\end{equation}
{For shortness, we simply write} $\mathcal{T}_{\rm tr} := \mathcal{T}_{\rm tr}^N$. By Theorem \ref{thm:key} in the previous section, with probability tending to one as $N\to\infty$, we may start with
an initial configuration $\eta\in\Sigma_{N}$ such that the barrier
set $\mathscr{B}(\eta)$ is $\Lambda \log N$-nice. Write $\mathscr{B}(\eta)=\{x_{1},x_{2},\dots,x_{\mathfrak{m}(\eta)}\}$
in  increasing order, and define
\begin{equation}
\mathbb{I}_{i}:=\{x_{i}+1,x_{i}+2,\dots,x_{i+1}\}\qquad\text{for}\quad1\le i\le\mathfrak{m}(\eta).\label{eq:Ii-def}
\end{equation}
{where we identified} $x_{\mathfrak{m}(\eta)+1} := x_{1}$. Then, $\mathbb{T}_{N}=\mathbb{I}_{1}\sqcup\cdots\sqcup\mathbb{I}_{\mathfrak{m}(\eta)}$
is a disjoint partition and $|\mathbb{I}_{i}| \le \Lambda \log N$ for all $1\le i\le \mathfrak{m}(\eta)$. Since
$\mathscr{B}(\eta)$ is the {set of barriers}, starting from $\eta$, the
dynamics leaves $x_{i}$ and $x_{i}+1$ empty for all $1\le i\le\mathfrak{m}(\eta)$.
This means that if we denote
by $\mathcal{T}_{{\rm tr}}^{{(i)}}$ the transience/freezing time of the initial configuration $\eta$ restricted to $\mathbb{I}_{i}$, we have
\begin{equation}
{\mathcal{T}_{{\rm tr}}=\max\{\mathcal{T}_{{\rm tr}}^{(i)}\, :\, 1\leq i\leq  \mathfrak{m}(\eta)\}},\label{eq:trans-dec}
\end{equation}
where, by construction of the barriers, the $\mathcal{T}_{\rm tr}^{{(i)}}$, $1 \le i \le \mathfrak{m}(\eta)$ are independent, so that
\begin{equation}
\mathbb{P}_\eta^N \left[ \mathcal{T}_{\rm tr} < C_{\rm UB} \log^3 N \right] = \prod_{i=1}^{\mathfrak{m}(\eta)}
\mathbb{P}_\eta^N \left[ \mathcal{T}_{\rm tr}^{{(i)}} < C_{\rm UB} \log^3 N \right] .
\end{equation}

Exploiting the mapping between the FEP and the {SWT} (described briefly in the previous paragraph \ref{sec:SWTFEPshort}, and in details in Appendix \ref{secA}), we may rewrite {the right-hand side of the last identity as}
\[
\prod_{i=1}^{\mathfrak{m}(\eta)}
\mathbb{P}_\eta^N \left[ \mathcal{T}_{\rm tr}^{{(i)}} < C_{\rm UB} \log^3 N \right] =
\prod_{i=1}^{\mathfrak{m}(\eta)}
\mathbb{Q}_{\xi} \left[ \mathcal{T}_{\rm tr}^{{(i)}} < C_{\rm UB} \log^3 N \right] ,
\]
{where $\mathbb{Q}_{\xi}$ denotes the law of the SWT dynamics starting from the SWT configuration  $\xi$ (which can be defined as $\Psi\circ\Upsilon \circ \Phi^{-1}(\eta)$,
%\coo{O: I changed the letters to match \eqref{eq:corres}, but it remains to specify a labeling so as to make $\Phi^{-1}$ well-defined.}
according to  \eqref{eq:corres}, Notation \ref{not:inv-image}, and \eqref{eq:trans-corr}).

 By slight abuse of notation, the transience time  $\mathcal{T}_{\rm tr}^{{(i)}}$} on the right-hand side is the transience time of the SWT dynamics started from $\xi$ restricted to the interval mapped from the particles in $\mathbb{I}_i$. 
 Let us denote this interval as $\mathbb{J}_i \subset \mathbb{T}_K$, where $K=|\eta|=\sum_{x \in \mathbb{T}_N} \eta(x)$. Note that $|\mathbb{J}_i| =\sum_{i\in\mathbb{I}_i}\eta(x) \le |\mathbb{I}_i| \le \Lambda \log N$.

\begin{figure}
\begin{tikzpicture}
\begin{scope}[scale=0.5]
\draw (0,0)--(18,0);
\foreach \i in {0,...,18} { \draw[thick] (\i,-0.1)--(\i,0.1); }
\foreach \i in {1,4,5,8,10,11,13,17} { \draw (\i+0.5,0.5) circle (0.4); }
\draw[ultra thick,red] (0,-0.3)--(0,0.3); \draw[ultra thick,red] (16,-0.3)--(16,0.3);
\end{scope}

\draw[thick,-latex] (0,-1.5)--(1,-1.5);

\begin{scope}[shift={(2,-1.5)},scale=0.5]
\draw (0,0)--(7,0);
\foreach \i in {0,...,7} { \draw[thick] (\i,-0.1)--(\i,0.1); }
\foreach \i in {1,4} { \draw (\i+0.5,0.5) circle (0.4); }
\foreach \j in {0} { \draw[] (0,-\j)--(0,-\j-1)--(1,-\j-1)--(1,-\j); }
\foreach \j in {0} { \draw[] (2,-\j)--(2,-\j-1)--(3,-\j-1)--(3,-\j); }
\foreach \j in {0,1} { \draw[] (6,-\j)--(6,-\j-1)--(7,-\j-1)--(7,-\j); }
\draw[ultra thick,red] (0,-0.3)--(0,0.3); \draw[ultra thick,red] (7,-0.3)--(7,0.3);
\end{scope}

\draw[thick,-latex] (6.5,-1.5)--(7.5,-1.5);

\begin{scope}[shift={(8.5,-1.5)},scale=0.5]
\draw (0,0)--(7,0);
\foreach \i in {0,...,7} { \draw[thick] (\i,-0.1)--(\i,0.1); }
\foreach \i in {1,4} { \draw (\i+0.5,0.5) circle (0.4); }
\draw[ultra thick,red] (0,-0.3)--(0,0.3); \draw[ultra thick,red] (7,-0.3)--(7,0.3);
\end{scope}
\end{tikzpicture}
\caption{\label{fig4.1}A FEP configuration $\eta$ on $\mathbb{I}_i \subset \mathbb{T}_N$ (delimited by thick vertical dashes), and the left-most part of $\mathbb{I}_{i+1}$. Below, its corresponding SWT configuration $\xi$ on the mapped interval $\mathbb{J}_i \subset \mathbb{T}_K$, and the configuration $\xi_i''$ obtained from $\xi$ by filling up all the traps in $\mathbb{J}_i$.}
\end{figure}

Now fix $1 \le i \le \mathfrak{m}(\eta)$. Since the SWT particles on $\mathbb{J}_i$ cannot escape the interval, 
if we define $\xi_i' \in \Gamma_{N,K}$ (cf. \eqref{eq:SWT-space}) as
\[
\xi_i' (x):=\begin{cases}
\xi(x) & \text{if} \quad x \in \mathbb{J}_i,\\
-N & \text{if} \quad x \not\in \mathbb{J}_i,
\end{cases}
\]
then the two SWT trajectories starting from $\xi$ and $\xi'$
are the same inside $\mathbb{J}_i$, thus
\[
\mathbb{Q}_{\xi}\left[\mathcal{T}_{\rm tr}^{{(i)}}<C_{\rm UB}\log^{3}N\right]=\mathbb{Q}_{\xi_i'}\left[\mathcal{T}_{\rm tr}^{{(i)}}<C_{\rm UB}\log^{3}N\right].
\]

Next, due to the monotonicity property of the SWT (cf. \cite[Section 1.4]{EM24}),
the transience time starting from $\xi_i'$ is upper bounded
by the transience time starting from a new configuration $\xi_i'' \ge \xi_i'$, {(meaning the order holds at every site)}
which is defined as
\[
\xi_{i}''(x):=\begin{cases}
1 & \text{if} \quad \xi_i'(\eta)(x)=1 \quad \text{and} \quad x \in \mathbb{J}_i,\\
0 & \text{if}\quad \xi_i'(\eta)(x)\le0\quad\text{and}\quad x \in \mathbb{J}_i,\\
-N & \text{if}\quad x \not\in \mathbb{J}_i .
\end{cases}
\]
Namely, $\xi_{i}''$ is obtained from $\xi_i'$ by removing
all traps in $\mathbb{J}_i$. Thus,
\[
\mathbb{Q}_{\xi_i'}\left[\mathcal{T}_{\rm tr}^{{(i)}}<C_{\rm UB}\log^{3}N\right]\ge\mathbb{Q}_{\xi_{i}''}\left[\mathcal{T}_{\rm tr}^{{(i)}}<C_{\rm UB}\log^{3}N\right].
\]
Note that starting from $\xi_{i}''$, $\mathcal{T}_{\rm tr}^{{(i)}}$
is exactly the time that all particles on $\mathbb{J}_i$ are absorbed
at either $x_i$ or $x_{i+1}+1$. See Figure \ref{fig4.1} for an example of this construction from $\eta \in \Sigma_{N,K}$ to $\xi_i'' \in \Gamma_{N,K}$.

Finally, we may interpret the SWT dynamics on $\mathbb{J}_i$ starting from $\xi_{i}''$
as labeled simple random walks on $\mathbb{J}_{i}$
with rate $1$ to both left and right which gets absorbed at $x_i$
or $x_{i+1}+1$.\footnote{The idea is to assign independent clocks to the edges and, every time
a clock rings, the states of the two vertices on that edge are exchanged.
Clearly, these random walks are not independent of each other. Nevertheless,
we only use the fact that the trajectory of each single particle follows
the simple random walk on $\mathbb{J}_{i}$.} By Lemma \ref{lem:SRW}-(1) with $M = |\mathbb J_i|$,
\[
\mathbb{Q}_{\xi_{i}''}\left[\mathcal{T}_{\rm tr}^{{(i)}}\ge C_{\rm UB}\log^{3}N\right]\le |\mathbb{J}_i| \times2\exp\left(-\frac{2\pi C_{\rm UB}\log^{3}N}{9 |\mathbb{J}_i|^{2}}\right)\le2 \Lambda \log N\times N^{-\frac{2\pi C_{\rm UB}}{9\Lambda^{2}}}.
\]
Combining all the displayed arguments, along with the fact that $\mathfrak{m}(\eta)\le N$,
\begin{equation}
\mathbb{P}_{\eta}^{N}\left[\mathcal{T}_{{\rm tr}}<C_{\rm UB}\log^{3}N\right]\ge\left(1-2\Lambda \log N\times N^{-\frac{2\pi C_{\rm UB}}{9\Lambda^{2}}}\right)^{N}.\label{eq:UB-bound}
\end{equation}
Therefore, by taking $C_{\rm UB}>0$ such that 
\begin{equation}
2\pi C_{\rm UB}>9\Lambda^2,\label{eq:C'-def}
\end{equation}
the right-hand side in \eqref{eq:UB-bound} converges to $1$ and
we have proved \eqref{eq:main-UB}.

\subsection{\label{sec4.2}Lower bound}

In this subsection, we prove the lower bound part of Theorem \ref{thm:main};
namely, we prove
\begin{equation}
\lim_{N\to\infty}\mathbb{P}_{\nu_{\rho}}^{N}\left[\mathcal{T}_{{\rm tr}}^N>C_{\rm LB}\log^{3}N\right]=1.\label{eq:main-LB}
\end{equation}
As we sample the initial configuration according to $\nu_\rho$, with probability tending to one we start from a configuration $\eta$ which has
at least one particle and one empty site. As done in Section \ref{sec4.1}, {following Section \ref{sec:SWTFEPshort}}
we identify the FEP dynamics with its corresponding SWT dynamics starting from $\xi := (\Psi \circ \Upsilon \circ \Phi^{-1})(\eta)$ (cf. Notation \ref{not:inv-image}),
%\coo{O: same remark as above about $\Phi^{-1}$}
such that
\[
\mathbb{P}_{\eta}^{N}\left[\mathcal{T}_{{\rm tr}}>C_{\rm LB}\log^{3}N\right]=\mathbb{Q}_{\xi}\left[\mathcal{T}_{{\rm tr}}>C_{\rm LB}\log^{3}N\right],
\]
where once again $\mathbb{Q}_{\xi}$ denotes the law of the SWT trajectories
starting from $\xi$ and we abbreviated as $\mathcal{T}_{\rm tr} := \mathcal{T}_{\rm tr}^N$.

Divide $\mathbb{T}_{N}$ into $b_{N}\in\{\lfloor\sqrt{N}\rfloor,\lceil\sqrt{N}\rceil\}$
intervals of length $\lfloor\sqrt{N}\rfloor$ or $\lceil\sqrt{N}\rceil$.
Applying Lemma \ref{l7} to each interval, we can find $b_N$ non-adjacent
%\coo{$b_N$ disjoint (and therefore $b'_N:=\lfloor b_{N}/2\rfloor$ non-adjacent)}
%\cosw{SW: we don't need $b_N'$ since the initial failure in Lemma \ref{l7} already guarantees that $\mathbb J_i$ are non-adjacent.}
sequences of at least $\Lambda' \log N +1$ consecutive particles
with probability
\begin{equation}\label{e:proba-consecutive-part}
1-\frac{b_N}N \ge1-\frac{2}{\sqrt N},
\end{equation}
for sufficiently large $N$. Then, these non-adjacent sequences of
at least $\Lambda' \log N+1$ consecutive particles in the
FEP system convert into non-adjacent sequences of at least $\Lambda' \log N$
consecutive particles in the corresponding SWT system. Denote by $\mathbb{J}_{1},\dots,\mathbb{J}_{b_{N}}$
these non-adjacent intervals inside $\mathbb{T}_K$ (where $K=|\eta|$) such that $|\mathbb{J}_{i}|\ge \Lambda' \log N$.
Clearly, $\xi(x)=1$ for all $x\in\mathbb{J}_{i}$.

Now, appealing again to the monotonicity property of the SWT (cf.
\cite[Section 1.4]{EM24}),
\[
\mathbb{Q}_{\xi}\left[\mathcal{T}_{{\rm tr}}>C_{\rm LB}\log^{3}N\right]\ge\mathbb{Q}_{\xi}\left[\mathcal{T}_{{\rm tr}}>C_{\rm LB}\log^{3}N\right],
\]
where $\xi'\le\xi$ is obtained from $\xi$ by declaring every state
on $\mathbb{T}_{K}\setminus(\mathbb{J}_{1}\cup\cdots\cup \mathbb{J}_{b'_{N}})$
as a trap with depth $N$, i.e.,
\[
\xi'(x)=\begin{cases}
1 & \text{if}\quad x\in\mathbb{J}_{i},\quad1\le i\le b_{N},\\
-N & \text{otherwise}.
\end{cases}
\]
Starting from $\xi'$, since the traps completely isolate the intervals
$\mathbb{J}_{1},\dots, \mathbb{J}_{b_{N}}$, denoting by $\mathcal{T}_{\rm tr}^{{(i)}}$
the SWT transience time on each interval $\mathbb{J}_{i}$,
\[
\mathbb{Q}_{\xi'}\left[\mathcal{T}_{{\rm tr}}\le C_{\rm LB}\log^{3}N\right]=\prod_{i=1}^{b_{N}}\mathbb{Q}_{\xi'}\left[\mathcal{T}_{\rm tr}^{{(i)}}\le C_{\rm LB}\log^{3}N\right].
\]

It remains to give an upper bound for each probability in the right-hand
side. As emphasized at the end of Section \ref{sec4.1}, we may understand
the SWT dynamics on $\mathbb{J}_{i}$ as $|\mathbb{J}_{i}|$ (dependent) simple
random walks with rate $1$ to both left and right. Considering only the
particle at the center and noting that $|\mathbb{J}_i| \ge \Lambda' \log N$, Lemma \ref{lem:SRW}-(2) with $M = \lceil \Lambda' \log N / 2 \rceil$ implies that
\[
\mathbb{Q}_{\xi'}\left[\mathcal{T}_{\rm tr}^{{(i)}}\le C_{\rm LB}\log^{3}N\right]\le1-\frac{1}{2}\exp\left(-\frac{\pi^{2}C_{\rm LB}\log^{3}N}{ (\Lambda' \log N )^2 }\right).
\]
Combining all the displayed observations, we obtain that
\[
\mathbb{P}_{\eta}^{N}\left[\mathcal{T}_{{\rm tr}}\le C_{\rm LB}\log^{3}N\right]\le\left(1-\frac{1}{2}\exp\left(-\frac{\pi^{2}C_{\rm LB}\log^{3}N}{ (\Lambda' \log N )^2 }\right) \right)^{b_{N}}.
\]
Taking $C_{\rm LB}>0$ such that 
\begin{equation}
\frac{\pi^{2}C_{\rm LB}}{(\Lambda')^2}<\frac{1}{2},\label{eq:C-def}
\end{equation}
we have
\[
\lim_{N \to \infty} \exp\left(-\frac{\pi^{2}C_{\rm LB}\log^{3}N}{ (\Lambda' \log N )^2 }\right) \times b_N = \infty,
\]
thus we conclude that
\[
\mathbb{P}_{\eta}^{N}\left[\mathcal{T}_{{\rm tr}}\le C_{\rm LB}\log^{3}N\right]\xrightarrow{N\to\infty}0,
\]
as long as $\eta$ satisfies the property before \eqref{e:proba-consecutive-part}. Equation \eqref{eq:main-LB} follows from this and \eqref{e:proba-consecutive-part}.

\begin{proof}[Proof of Theorem \ref{thm:main}]
The theorem follows immediately from \eqref{eq:main-UB} and \eqref{eq:main-LB}.
\end{proof}

\section{\label{sec5}Polynomial supercritical intervals and dynamical decorrelation}

{The proof of Theorem \ref{thm:main2} also relies on the correspondence between the FEP and the SWT exploited  in the previous section, we refer the reader once again to the brief description of Section \ref{sec:SWTFEPshort} and to Appendix \ref{secA} where this correspondence is explained in details.
Because of this correspondance,} we will study for $K = K_N^\alpha \in (0,N)$ {(defined in \eqref{eq:Kalpha})} the {transience time close to criticality starting from a} uniform state $\mu_{N,K}$ over $\Gamma_{N,K}$ (cf. \eqref{eq:SWT-space}), which is the set of SWT configurations on {$\T_K$ obtained by mapping FEP configurations with $K$ particles (i.e. those satisfying \eqref{eq:mappedGammaNK})}. We abbreviate, for simplicity, 
\[\mu_\alpha :={\mu_{N,K}}= \mu_{N,K_N^\alpha}.\]

In this section, we obtain two key {ingredients} to prove Theorem \ref{thm:main2}.
First, we
%give a dynamical construction of the uniform distribution $\mu_{N,K}$ and thereby
prove local density estimates for typical configurations sampled from $\mu_{N,K}$, which corresponds to the static estimate conducted in Section \ref{sec3} in the subcritical case. {The latter allowed us to identify, from the initial state, barriers-separated regions of the FEP that would \emph{deterministically} never be able to communicate.  Unfortunately,  for the near-critical case such regions do not exist, so that we complement this density estimate with a dynamical \emph{decorrelation} estimate for the SWT which allows us to make sure that the evolution of far enough regions is close to independent.}

\subsection{Finding supercritical boxes under $\mu_\alpha$}
\label{s:supercritboxes}

%\coo{O: Please check the proof below, which replaces the previous 7 pages of computations. The previous proof is commented (hidden) in the tex.}
%\cosw{SW: Looks nice to me, thanks for the effort!}

For $\delta>0$, let us fix $k_N:=\lfloor\ell_NN^{-\delta}\rfloor$, namely a slightly smaller scale than  $\ell_N=N\wedge N^{2\alpha}$ introduced in Theorem \ref{thm:main2}, at which supercritical boxes disappear. We will show that with high probability under $\mu_\alpha$ we can find at least $\mathcal{O}(\log N)$ consecutive supercritical boxes of size $k_N$.

Recall from \eqref{eq:Kalpha} that we let $K=K_N^\alpha = \lfloor \frac N2 - N^{1-\alpha} \rfloor$. Let us start by comparing the uniform measure $\mu_\alpha$ to a simpler product measure. Define $\pi_\alpha$ on the set of SWT configurations $\Z_{\le 1}^{\T_K}$ as 

{\begin{equation}
    \pi_\alpha (\xi)=\prod_{x\in\T_K} \left[ \left(\frac{K}{N}\right)\Big(1-\frac{K}{N}\Big)^{1-\xi(x)} \right] ,\qquad \xi\in\Z_{\le 1}^{\T_K},
\end{equation}
meaning that $\xi(x)$ is $1$ minus a non-negative geometric distribution with parameter $K/N$.}

\begin{lem}\label{l:mupi}
For any $\xi\in\Gamma_{N,K}$,
\begin{equation}
    \frac{\mu_\alpha(\xi)}{\pi_\alpha(\xi)}\le \frac{e^2}{\sqrt{2\pi}}\sqrt{\frac{N(N-K)}{K}}.
\end{equation}
\end{lem}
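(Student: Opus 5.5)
The plan is to compute both measures exactly on $\Gamma_{N,K}$ and then use Stirling's bounds. Two observations do the work: $\pi_\alpha$ is constant on $\Gamma_{N,K}$, and $\mu_\alpha$ is uniform there. So the ratio reduces to an explicit expression in $|\Gamma_{N,K}|$, and the whole proof is a lower bound on a binomial coefficient.

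\emph{Step 1: $\pi_\alpha$ is constant on $\Gamma_{N,K}$.} Write $p:=K/N$. For $\xi\in\Gamma_{N,K}$, the constraint \eqref{eq:mappedGammaNK} gives $\sum_{x\in\T_K}(1-\xi(x)) = K-(2K-N)=N-K$. Hence
\[
\pi_\alpha(\xi)=p^K(1-p)^{N-K}=\frac{K^K(N-K)^{N-K}}{N^N}
\]
for every $\xi\in\Gamma_{N,K}$.

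\emph{Step 2: the cardinality of $\Gamma_{N,K}$.} Since $\mu_\alpha(\xi)=|\Gamma_{N,K}|^{-1}$, I next need $|\Gamma_{N,K}|$. Assuming $\Gamma_{N,K}$ (cf. \eqref{eq:SWT-space}) is exactly the set of $\xi\in\Z_{\le1}^{\T_K}$ with $\sum_x\xi(x)=2K-N$, the variables $1-\xi(x)\ge0$ form a weak composition of $N-K$ into $K$ parts. This gives
\[
|\Gamma_{N,K}|=\binom{N-1}{K-1}=\frac KN\binom NK .
\]
This identification is the one point where I must check the precise definition in Appendix \ref{secA}. If $\Gamma_{N,K}$ were instead in bijection with $\Sigma_{N,K}$, so that $|\Gamma_{N,K}|=\binom NK$, the resulting bound would only be better, by a factor $K/N$. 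Either way the claimed bound follows.

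\emph{Step 3: reduce to a binomial lower bound.} Combining Steps 1 and 2,
\[
\frac{\mu_\alpha(\xi)}{\pi_\alpha(\xi)}=\frac{N}{K}\Big[\binom NK p^K(1-p)^{N-K}\Big]^{-1}.
\]
It therefore suffices to lower bound $\binom NK p^K(1-p)^{N-K}$.

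\emph{Step 4: apply Stirling's bounds.} I use the two-sided bounds $\sqrt{2\pi}\,n^{n+1/2}e^{-n}\le n!\le e\,n^{n+1/2}e^{-n}$, valid for $n\ge1$; this applies since $1\le K\le N-1$. Using the lower bound for $N!$ and the upper bounds for $K!$ and $(N-K)!$, the exponential factors cancel and
\[
\binom NK\ge\frac{\sqrt{2\pi}}{e^2}\,\frac{N^{N+1/2}}{K^{K+1/2}(N-K)^{N-K+1/2}}.
\]
Multiplying by $p^K(1-p)^{N-K}=K^K(N-K)^{N-K}N^{-N}$ gives
\[
\binom NK p^K(1-p)^{N-K}\ge\frac{\sqrt{2\pi}}{e^2}\sqrt{\frac{N}{K(N-K)}} .
\]
Inserting this into Step 3,
\[
\frac{\mu_\alpha(\xi)}{\pi_\alpha(\xi)}\le\frac NK\cdot\frac{e^2}{\sqrt{2\pi}}\sqrt{\frac{K(N-K)}{N}}=\frac{e^2}{\sqrt{2\pi}}\sqrt{\frac{N(N-K)}{K}},
\]
which is exactly the claim.

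There is no real analytic obstacle in this argument. The only delicate point is the counting in Step 2, namely making sure the state space $\Gamma_{N,K}$ matches the weak-composition description.
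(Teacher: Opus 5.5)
Your proof is correct and follows the paper's argument: $\pi_\alpha$ is constant on $\Gamma_{N,K}$, equal to $(K/N)^K(1-K/N)^{N-K}$, and $\mu_\alpha(\xi)=\binom{N-1}{K-1}^{-1}$, so Robbins' two-sided Stirling bound gives the constant $e^2/\sqrt{2\pi}$. Your weak-composition count in Step 2 matches the definition \eqref{eq:SWT-space} exactly, so the hedge about $\binom NK$ is not needed.
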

\begin{proof}
It is straightforward to check that, if $\xi\in\Gamma_{N,K}$ {(the mapped set of SWT configuration is defined in \eqref{eq:SWT-space})},
\begin{equation}
   \pi_\alpha(\xi) =\left(\frac{K}{N}\right)^K\left(1-\frac{K}{N}\right)^{N-K},
\end{equation}
and
\begin{equation}
    \mu_\alpha(\xi)=\frac{1}{\binom{N-1}{K-1}}.
\end{equation}
The lemma follows from these expressions and Robbins' bound \cite{Robbins55}, $n!e^n/n^{n+1/2}\in (\sqrt{2\pi},e]$.
\end{proof}

Now let us define what we are looking for. For $A\subset\T_K$ and $\xi\in \Z_{\le 1}^{\T_K}$, let
\begin{equation}
    S_A=S_A(\xi):=\sum_{i\in A}\xi(i).
\end{equation}

For $q\ge 1$, $A_1,A_2,\dots, A_{q}$ are called \emph{consecutive boxes of size $k_N$} if, for $r\in\{1,\ldots,q\}$,  $A_r=\llbracket a_r, b_r\rrbracket\subset\T_K$, with $b_r-a_r=k_N-1$ and for $r<q$, $b_r+1=a_{r+1}$. 
Denote by
\[
[A_1,\dots,A_q]=\bigsqcup_{r=1}^q A_r
\]
the disjoint union of such consecutive boxes.
Given a SWT configuration $\xi\in\Gamma_{N,K}$, we call a box $ C=[A_1,\dots,A_q]$ a \emph{good $q$-box} if the configuration in each $A_r$ is \emph{supercritical}, i.e.
\begin{equation}
\label{eq:goodbox}
S_{A_r}>0 \qquad \text{for all} \quad 1\leq r\leq q.
\end{equation}

The main result of this subsection is the following estimate.
%Recall that we chose $K=K_N^\alpha=\left\lfloor\frac{N}{2}-N^{1-\alpha}\right\rfloor$.

\begin{lem}
\label{lem:goodbox}
There exists a constant $\beta>0$ such that, if $q=\lfloor \beta \log N\rfloor$,
\begin{equation}
   \lim_{N\rightarrow\infty} \mu_\alpha(\text{there is no good} \enspace q \text{-box in} \enspace \T_K)=0.
\end{equation}
\end{lem}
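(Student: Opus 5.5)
Transfer to the product measure $\pi_\alpha$ via Lemma \ref{l:mupi}: for any event $E$, $\mu_\alpha(E)\le C\sqrt N\,\pi_\alpha(E)$. So it suffices to show that under $\pi_\alpha$ the probability of having no good $q$-box is $o(N^{-1/2})$, ideally bounded by $\exp(-N^{c})$.

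Under $\pi_\alpha$ the variables $\xi(x)$ are i.i.d., equal to $1-G$ with $G$ geometric of parameter $p=K/N$. One computes $E[\xi(x)]=1-(1-p)/p=2-N/K\approx -4N^{-\alpha}$ and $\mathrm{Var}(\xi(x))=(1-p)/p^2\approx 2$.

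Partition $\T_K$ into $m:=\lfloor K/(qk_N)\rfloor$ disjoint candidate blocks, each made of $q$ consecutive boxes of size $k_N$. Under $\pi_\alpha$ the $mq$ boxes are independent, so
\[
\pi_\alpha(\text{no good } q\text{-box})\le (1-p_N^q)^m\le \exp(-m\,p_N^q),\qquad p_N:=\pi_\alpha(S_A>0),\ |A|=k_N .
\]
The task therefore reduces to a lower bound on $p_N$. Since $S_A$ is a sum of $k_N$ i.i.d. variables with mean $\approx-4N^{-\alpha}k_N$ and standard deviation $\approx\sqrt{2k_N}$, the standardized drift is
\[
4N^{-\alpha}\sqrt{k_N/2}\lesssim N^{-\alpha}\sqrt{N^{2\alpha-\delta}}=N^{-\delta/2}\to0
\]
in the case $\alpha<1/2$. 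For $\alpha\ge1/2$ it is at most $N^{1/2-\alpha-\delta/2}\to0$. By the CLT (Berry–Esseen applies, since geometric variables have a third moment bounded uniformly as $p\to1/2$), $p_N\ge 1/2-o(1)\ge 1/3$ for $N$ large.

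Then $m\,p_N^q\ge m\,3^{-q}=m\,N^{-\beta\log 3}$ up to constants. Using $m\gtrsim K/(k_N\log N)\gtrsim N^{\delta}/\log N$, pick $\beta>0$ with $\beta\log3<\delta/2$. This gives $m\,p_N^q\ge N^{\delta/2}/\log N$, hence $\pi_\alpha(\text{no good } q\text{-box})\le\exp(-N^{\delta/3})$. Multiplying by $C\sqrt N$ yields the limit $0$.

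The main obstacle is the case $\alpha\ge1/2$, where $\ell_N=N$ and $k_N=N^{1-\delta}$: the number of blocks $m\approx N^{\delta}/(\beta\log N)$ is only polynomial in $N^\delta$, so $\beta$ must depend on $\delta$. The statement fixes $\delta$ beforehand, so this is acceptable. The other point to check is the uniform Berry–Esseen constant, which is routine. The canonical constraint $\xi\in\Gamma_{N,K}$ is fully handled by the $\sqrt N$ comparison in Lemma \ref{l:mupi}.
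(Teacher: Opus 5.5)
Your proposal is correct and follows the paper's proof essentially step by step. You compare $\mu_\alpha$ to the product measure $\pi_\alpha$ at an $\mathcal{O}(\sqrt N)$ cost via Lemma \ref{l:mupi}, and use independence of disjoint blocks of $q$ consecutive boxes to get a bound of the form $[1-\pi_\alpha(S_A>0)^q]^{\#\text{blocks}}$. You then bound $\pi_\alpha(S_A>0)$ below by a constant through a CLT using $\sqrt{k_N}\,(2K-N)/K\to0$, and choose $\beta$ small depending on $\delta$, just as the paper does. The only differences are cosmetic: you use Berry--Esseen instead of Lindeberg's CLT, fix the explicit constant $1/3$, and count blocks using $|\T_K|=K$ rather than $N$.
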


\begin{proof}
We claim that there exists $c>0$ such that, for $N$ large enough, for any $A\subset\T_K$ a box of size $k=k_N$,
\begin{equation}\label{e:pigoodbox}
    \pi_\alpha(S_A>0)\ge c.
\end{equation}

Let us first show that the lemma follows from this claim. Divide $\T_K$ into consecutive boxes $\T_K=[A_1,A_2,\dots, A_m, A_{m+1}]$, each of same size $|A_j|=k_N$ for $j\leq m:=\lfloor N/k_N\rfloor$, except $A_{m+1}$ which may be smaller.
By Lemma \ref{l:mupi}, {and since $\mu_\alpha$ is supported on $\Gamma_{N,K}$,}
\begin{multline}
\mu_\alpha (\text{there is no good} \enspace q \text{-box in} \enspace \T_K)\\
\le \frac{e^2}{\sqrt{2\pi}}\sqrt{\frac{N(N-K)}{K}} \, \pi_\alpha (\text{there is no good} \enspace q \text{-box in} \enspace \T_K).
\end{multline}
Since $\pi_\alpha$ is product, it is standard to estimate the $\pi_\alpha$-probability in the right-hand side:
\begin{multline}
    \pi_\alpha (\text{there is no good} \enspace q \text{-box in} \enspace \T_K)\\
    \le \pi_\alpha \left(\forall i=0,\dots,\lfloor m/q\rfloor -1, \enspace \exists r\in\{1,\ldots,q\} \enspace \text{s.t.} \enspace S_{A_{iq+r}}\le 0\right)\\
    =\left[1-\pi_\alpha (S_{A_1}>0)^q\right]^{\lfloor m/q\rfloor}
    \le C \exp\left(-\frac{N c^q}{k_Nq}\right)\le C' \exp\left(-\frac{N^\delta N^{\beta\log c}}{\beta\log N}\right).
\end{multline}
Here, $C,C'>0$ are constants independent of $N$. The conclusion follows by choosing $\beta>0$ such that $\delta+\beta\log c>0$.

It remains to prove the claim \eqref{e:pigoodbox}. Note that under $\pi_\alpha$, the variables $(1-\xi(x))_{x\in\T_K}$ are distributed as i.i.d. ($\mathbb N_0$-valued) geometric random variables of parameter ${K/N \simeq \frac{1}{2}-N^{-\alpha}}$ (and expectation $\frac{N-K}{K}\simeq 1+4N^{-\alpha})$. Thus,
\begin{align*}
    \pi_\alpha (S_A\le0)&=\pi_\alpha \Big(\sum_{i=1}^{k_N}(1-\xi(i))\ge k_N\Big)\\
    &=\pi_\alpha \left(\frac{1}{\sqrt{k_N}}\sum_{i=1}^{k_N}\left[1-\xi(i)-\frac{N-K}{K}\right]\ge \sqrt{k_N}\frac{2K-N}{K}\right).
\end{align*}
Note that our choice of $k_N$ implies $\sqrt{k_N}\frac{2K-N}{K}\xrightarrow{N\to\infty} 0$. It is also straightforward to check that the geometric variables satisfy the hypotheses of Lindeberg's central limit theorem \cite[Theorem 27.2]{billingsley}, {meaning the right-hand side above converges to $1/2$, which proves \eqref{e:pigoodbox}}.
\end{proof}

\subsection{\label{sec5.2}Dynamical decorrelation}
Next, we prove an estimate on the propagation of information in the system, in order to show a dynamical decorrelation property for the dynamics in distant regions.
{We denote by $\Q_\xi$ the distribution of the SWT  started from configuration $\xi\in\Z_{\le 1}^{\T_K}$.}

\begin{lem}
\label{lem:decorrelation}
Fix a segment $A \subset \T_K$, and for $\ell>0$, define its enlarged set
\[
A_\ell:=\{i\in \T_K : d(i,A)\leq \ell\}.
\]
Then, there exist a universal constant $c>0$ and a coupling {$\widehat{\Q}_{\xi,A_\ell}$} between the SWT $\{\xi(t)\}_{t\ge 0}$ distributed as $\Q_\xi$  and another SWT $\{\xi'(t)\}_{t\ge0}$ on the segment $A_\ell$ with \emph{closed boundary conditions} started from $\xi_{|A_\ell}$ such that
\[
{\min_\xi \widehat{\Q}_{\xi,A_\ell}} \left(\xi_{|A}(t)=\xi'_{|A}(t), \enspace \forall t\in [0, T] \right) \ge 1 - 4Te^{-c \ell^2/T} - e^{-cT} .
\]
\end{lem}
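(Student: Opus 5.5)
We construct the coupling through a graphical (Harris) construction and then bound the speed at which information travels from outside $A_\ell$ into $A$. As recalled at the end of Section \ref{sec4.1}, the SWT dynamics can be realized with independent rate-one Poisson clocks attached to the edges $\{i,i+1\}$ of $\T_K$. When the clock of an edge rings, the contents of its two endpoints are updated: a particle next to an empty site or trap is moved, and is annihilated (decreasing the trap depth by one) if it enters a trap. The update rule is symmetric and depends only on the two endpoint values, so the two-site update is a deterministic function of the pair of states (the particle direction being dictated by which endpoint holds the particle). We run $\xi(t)$ on $\T_K$ with all clocks, and $\xi'(t)$ on $A_\ell$ with closed boundaries using the same clocks restricted to edges inside $A_\ell$. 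The boundary edges of $A_\ell$ are simply ignored by $\xi'$. This gives the coupling $\widehat{\Q}_{\xi,A_\ell}$, and it does not depend on $\xi$ except through the initial data.

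The key deterministic step is a light-cone argument. A discrepancy between $\xi$ and $\xi'$ can only be created at the two endpoints of $A_\ell$, through rings of the boundary edges. After that, it can only move by one site when a ring occurs on an edge adjacent to a site already carrying a discrepancy. By induction over the ring times, the set of sites where $\xi(t)\neq\xi'(t)$ is contained in the set of sites reachable from the boundary of $A_\ell$ by an increasing chain of edge rings: a path $x_0\in\partial A_\ell, x_1, \dots, x_m$ with $|x_{j+1}-x_j|=1$ and ring times $s_1<\dots<s_m\le t$ on the edges $\{x_{j-1},x_j\}$. Consequently, if $\xi_{|A}(t)\neq\xi'_{|A}(t)$ for some $t\le T$, there is such a causal path of length at least $\ell$ within time $T$, starting from one of the two boundary points.

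It remains to bound the probability of a causal path, which we expect to be the main technical obstacle, mainly in obtaining the Gaussian form $e^{-c\ell^2/T}$ instead of only $e^{-c\ell}$. The natural union bound over nearest-neighbour paths of length $m$ controls the probability that a fixed path is realized in time $T$ by $P(\mathrm{Poisson}(T)\ge m)$, so the total is at most $2\cdot 2^m T^m/m!$. Summing over $m\ge\ell$ gives $e^{-c\ell}$ when $\ell\gg T$, but this is too weak in the diffusive regime $\ell\le T$. To recover the diffusive scale, we will instead use the monotone first-passage structure in one dimension: the rightmost site reachable from the left boundary is dominated by a process that advances by one at each ring of the edge immediately to its right. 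This process is a Poisson process of rate $1$ (and similarly on the other side), so its displacement is $\mathrm{Poisson}(T)$, whose tail already gives only $e^{-c\ell}$ for $\ell\gtrsim T$. For $\ell\ll T$, however, the Poisson front travels a distance of order $T$ and the bound fails. We therefore expect that the intended proof uses the exclusion/random-walk structure itself. A discrepancy is carried by a single tagged particle or trap-depth unit, whose motion is a rate-one simple random walk, exactly as in the use of Lemma \ref{lem:SRW}. The plan is then to discretize $[0,T]$ into at most $T$ unit intervals (accounting for the factor $4T$), and on each interval bound the probability that a discrepancy entering from one of the two boundaries reaches $A$ by a random-walk maximal inequality $2e^{-c\ell^2/T}$. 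Finally, we use a large-deviation bound $e^{-cT}$ on the total number of rings near the boundary within each unit interval, to ensure that only boundedly many discrepancies are injected per unit time. Combining these bounds gives $1-4Te^{-c\ell^2/T}-e^{-cT}$.
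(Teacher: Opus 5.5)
\textbf{There is a genuine gap: the step meant to recover the Gaussian bound $e^{-c\ell^2/T}$.}

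What is sound: your coupling (same clocks, with the boundary edges of $A_\ell$ ignored by $\xi'$) is exactly the paper's. Your light-cone reduction is also correct, and you rightly note that it only gives $e^{-c\ell}$. That is useless when $\ell\ll T$; in the application, $\ell=k_N\approx\sqrt{t_N}\,N^{\varepsilon/4}$.

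Where it breaks: when you compare $\xi$ and $\xi'$ directly, a new discrepancy is injected every time a particle crosses a marked edge. Inside $A_\ell$ these discrepancies coexist and interact:
\begin{itemize}
\item same-sign particle-type discrepancies exclude each other;
\item opposite-sign ones can annihilate;
\item two same-sign ones can pile up on one trap site and later split again. For example, on $\{x,x+1\}$ the pair $\xi=(-1,1)$, $\xi'=(-2,0)$ becomes $(0,0)$ versus $(-2,0)$ after a ring of that edge.
\end{itemize}
So there is no well-defined trajectory per discrepancy, let alone a proof that each is a symmetric walk. Your description of the motion is also off even for a single discrepancy. A discrepancy is not carried by a tagged particle or trap unit: with $\xi(x)=1$, $\xi'(x)=0$, $\xi(x+1)=\xi'(x+1)=1$, a ring moves $\xi'$'s particle and the discrepancy shifts to $x+1$. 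And a discrepancy of trap type does not move at all, so it is not a rate-one simple random walk.

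\textbf{The missing idea is the paper's interpolation.} Let $t_k$ be the $k$-th ring of the two marked edges, and let $\xi^{(k)}$ use the same clocks but close the marked edges after $t_k$. Then:
\begin{itemize}
\item $\xi^{(0)}_{|A_\ell}=\xi'$, and $\xi=\xi^{(k_T)}$ on $[0,T]$;
\item $\xi^{(k)}$ and $\xi^{(k+1)}$ agree before $t_{k+1}$, and afterwards differ inside $A_\ell$ by at most one discrepancy, created at distance $\ell$ from $A$;
\item this single discrepancy is an active/inactive symmetric walk with jump rate at most $1$, so Doob's inequality plus a large deviation bound gives $e^{-c\ell^2/T}$ for it reaching $A$;
\item telescoping over $k$ then yields $\xi_{|A}=\xi^{(0)}_{|A}$ on $[0,T]$.
\end{itemize}

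\textbf{Your error accounting also does not work as written.} Requiring boundedly many boundary rings in each of $\sim T$ unit intervals fails with probability tending to one as $T\to\infty$, since each interval sees a Poisson$(2)$ number of rings. So it cannot cost only $e^{-cT}$. In the paper, $e^{-cT}$ bounds the single event that the total number $k_T$ of marked-edge rings in $[0,T]$ (a Poisson$(2T)$ variable) exceeds $4T$. The factor $4T$ is the union bound over the single-discrepancy comparisons of $\xi^{(k)}$ with $\xi^{(k+1)}$, $0\le k<4T$, not a time discretization.
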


\begin{proof}
Mark the two edges $e_1$ and $e_2$ that link $A_\ell$ to $\T_K\setminus A_\ell$. We assume that the SWT dynamics on $\T_K$ is ruled by a collection $(\mathscr{C}_e)_{e\in \T_K}$ of rate $1$ independent Poisson clocks, and we define, for $t\ge 0$,
\[
k_t := \big| \{(\mathscr{C}_{e_1}\cup \mathscr{C}_{e_2})\cap [0,t]\} \big|
\]
as the number of rings that occurred until time $t$ on the marked edges. Let $t_k$ be the time of the $ k$-th ring of the marked edges, meaning the time of the $k$-th jump of the process $(k_t)_{t\geq 0}$. %For simplicity, let $k_0 := 0$.

For any integer $k\geq 0$, we then define $\xi^{(k)}$ to be the SWT-like process driven by the Poisson clocks $(\widetilde{\mathscr{C}}_e)_{e\in \T_K}$, where 
\[
\widetilde{\mathscr{C}}_e:=
\begin{cases}
\mathscr{C}_e & \text{for} \quad e\neq e_1, e_2 ,\\
\mathscr{C}_e \cap [0,t_k] & \text{for} \quad e\in\{ e_1, e_2\}.
\end{cases}
\]
 In other words, in $\xi^{(k)}$, we let the marked edges ring $k$ total times, after which we close them both so that no particle can jump across. In particular, $\xi^{(0)}_{|A_\ell}$ and $\xi^{(0)}_{|A_\ell^c}$ are two SWT on the closed segments $A_\ell$ and $A_\ell^c$ evolving independently. Note that $\xi^{(k)}$ is not a Markov process, since the rate at which the marked edges are crossed depends on the whole past of the process, however  $\{(\xi^{(k)}_t, k_t) \}_{t \ge 0}$ is.

We now want to obtain some control, for $k\geq 0$,  over the discrepancies between $\xi^{(k)}$ and $\xi^{(k+1)}$.  Up to $t_k$, both processes are identical, however on the event $R$ that the ring at time $t_{k+1}$ makes a particle jump across one of the marked edges, two discrepancies (with opposite signs) respectively appear in $A_\ell$ and $ A_\ell^c$ between
$\xi^{(k)}$ and $\xi^{(k+1)}$.  After time $t_{k+1}$, the two edges separating $A_\ell$ from $A_\ell^c$ close down, and we can define on $R$, in a unique way, the trajectory $X_t$ of the discrepancy in $A_\ell$, as
\[
X_t={\rm argmax} \left\{ i \in A_\ell : \left|\xi^{(k)}_i(t)-\xi^{(k+1)}_i(t) \right| \right\} , \qquad t \ge t_{k+1}.
\]
With the above construction, the discrepancy's position evolves as an active/inactive random walk. More precisely, a single discrepancy between two SWT can be of two types: either one of the processes contains a particle where the other has an empty site, which we call the \emph{particle type}, or both processes take non-positive values at the site of the discrepancy, meaning one has a $1$-deeper trap than the other, which we call the \emph{trap type}. Let $\sigma(t)\in \{0,1\}$  indicate whether at time $t$ the discrepancy at $X_t$ is of trap type $(\sigma(t)=0)$ or particle type ($\sigma(t)=1$). With these notations, the process $X_t$ jumps at rate $1$ to each of its nearest neighbors while $\sigma(t)=1$, whereas while $\sigma(t)=0$ it remains stuck until enough particles reach $X_t$ (in both processes) to make the discrepancy change its type. As a consequence, $X_t$ jumps symmetrically at rate at most $1$, and we can write for some universal constant $c_1>0$, 
\begin{equation}
\label{eq:LDRWX}
\Prob \left( \sup_{ t \in [t_{k+1} , T]} \left|X_t-X_{t_{k+1}}\right| \geq \ell \right) \leq e^{-c_1 \ell^2/(T-t_{k+1})}\leq e^{-c_1 \ell^2/T},
\end{equation}
by Doob's inequality and a standard large deviations bound on symmetric random walks. 

Furthermore, by construction, at the time the discrepancy appears we have $d(A,X_{t_{k+1}})=\ell$, so that
\begin{equation}
\label{eq:distell}
\sup_{ t \in [t_{k+1} , T]} \left| X_t-X_{t_{k+1}} \right| < \ell \qquad  \text{implies} \qquad  \xi^{(k+1)}_i(t)=\xi^{(k)}_i(t), \quad \forall i\in A, \quad \forall t\leq T .
\end{equation}
Making $k$ vary, we denote by $X^{(k)}$ the trajectory of the discrepancy between $\xi^{(k+1)} $ and $ \xi^{(k)}$ in $A_\ell$. We set by default $X^{(k)}_t=x^{\star}$ if no discrepancy was created at the time of the ring, where $\{x^\star\}=e_1\cap A_\ell$ is an arbitrary discrepancy creation point. Define two events as  {follows}
\begin{itemize}
\item $E_1$: There are less than $4T$ rings before time $T$, i.e. $k_T\leq 4T$, so that in particular $\xi\equiv \xi^{(4T)}$ on $\T_K\times [0,T]$.
\item $E_2$: None of the first $4T$ discrepancies' trajectories has managed to reach $A$ before time $T$, meaning
\[
\sup_{ k \in \llbracket 0,4T-1 \rrbracket} \sup_{ t \in [t_{k+1},T]} \left| X^{(k)}_t-X^{(k)}_{t_{k+1}} \right| < \ell.
\]
\end{itemize}
Then, according to equation \eqref{eq:distell}, on $E_1\cap E_2$, the identity
$\xi^{(k+1)}_i(t)=\xi^{(k)}_i(t)$ holds for any $ i\in A,\; t\leq T,\; k\leq 4T-1$, so that in particular, 
\[\xi_i(t)=\xi^{(0)}_i(t) \qquad \text{on} \quad  A \times [0,T].\]
By a crude large deviations estimate, since both marked edges' clocks ring at rate $1$, the probability of $E_1^c$ is bounded by $e^{-c_2T}$ for some universal constant $c_2 > 0$, whereas the probability of $E_2^c$ is, by union bound and \eqref{eq:LDRWX}, bounded above by $4Te^{-c_1\ell^2/T}$. Choosing $c=c_1\wedge c_2$ proves the lemma.
\end{proof}

For $A\subset \T_K$ and $T>0$, define 
\[
\mathscr{F}_{A,T} := \sigma ( \{\xi_i(t): i\in A, \enspace t\in [0,T]\} )
\]
as the $\sigma$-algebra generated by the configuration observed in $A\times [0,T]$.
As a straightforward consequence of Lemma \ref{lem:decorrelation}, we have the following decorrelation estimate between  $q \ge 1$ boxes.
\begin{cor}
\label{cor:decorrelation}
For $q \ge 1$, fix $q$ segments $A_1,\dots , A_q \subset \T_K$,  that satisfy $d(A_n,A_m)> 2\ell$ for any $n\neq m$. Then, there exists a universal constant $c>0$ such that for any family of events $\{E_n\in\mathscr{F}_{A_n,T}\}_{1 \le n \leq q}$, and initial configuration $\xi$,
\[
\left| \Q_{\xi}\left(\bigcap_{n=1}^q E_n\right)-\prod_{n=1}^q\Q_{\xi}(E_n) \right| \leq 4qTe^{-c\ell^2/T}+qe^{-cT}.
\]
\end{cor}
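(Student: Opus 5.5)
The plan is to apply Lemma \ref{lem:decorrelation} to each segment $A_n$ separately. The resulting localized processes $\xi'^{(n)}$, which live on the enlarged segments $(A_n)_\ell$, will be independent of one another, and we compare both sides of the inequality with the corresponding quantity for these processes.

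\textbf{Step 1: a joint coupling.} Since $d(A_n,A_m)>2\ell$, the enlarged sets $(A_n)_\ell$ are pairwise disjoint. In the construction of the proof of Lemma \ref{lem:decorrelation}, all processes are driven by the same family of Poisson clocks $(\mathscr C_e)$. For each $n$, we close the two marked edges of $(A_n)_\ell$ to obtain $\xi'^{(n)}$, the SWT on $(A_n)_\ell$ with closed boundary conditions started from $\xi_{|(A_n)_\ell}$. Each $\xi'^{(n)}$ is a deterministic function of the clocks on the edges interior to $(A_n)_\ell$. These edge sets are disjoint, so the processes $\xi'^{(1)},\dots,\xi'^{(q)}$ are mutually independent. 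All of them are built on the same probability space as $\xi$. Let $G_n:=\{\xi_{|A_n}(t)=\xi'^{(n)}_{|A_n}(t)\ \forall t\le T\}$. Lemma \ref{lem:decorrelation}, applied to each $n$ under this common clock construction, gives $\Prob(G_n^c)\le 4Te^{-c\ell^2/T}+e^{-cT}$ for every $n$.

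\textbf{Step 2: comparison.} Each $E_n\in\mathscr F_{A_n,T}$ is of the form $\{\xi_{|A_n\times[0,T]}\in B_n\}$. Define $E_n':=\{\xi'^{(n)}_{|A_n\times[0,T]}\in B_n\}$. On $G_n$ the events $E_n$ and $E_n'$ coincide, so $|\Q_\xi(E_n)-\Prob(E_n')|\le\Prob(G_n^c)$. On $\bigcap_n G_n$ the events $\bigcap E_n$ and $\bigcap E_n'$ coincide, so
\[
\Bigl|\Q_\xi\Bigl(\bigcap_n E_n\Bigr)-\Prob\Bigl(\bigcap_n E_n'\Bigr)\Bigr|\le\sum_n\Prob(G_n^c).
\]
By independence, $\Prob(\bigcap E_n')=\prod_n\Prob(E_n')$. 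To compare products of numbers in $[0,1]$ we use the telescoping bound $|\prod a_n-\prod b_n|\le\sum|a_n-b_n|$, which gives $|\prod\Prob(E_n')-\prod\Q_\xi(E_n)|\le\sum_n\Prob(G_n^c)$. By the triangle inequality, the total error is then at most $2q(4Te^{-c\ell^2/T}+e^{-cT})$.

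\textbf{Step 3: constants.} The extra factor $2$ can be absorbed to reach the stated form, at least in the regime that matters, by decreasing $c$. More cleanly, we redo the final union bound in the proof of Lemma \ref{lem:decorrelation} with slack: $\Prob(k_T>4T)$ is exponentially smaller than $e^{-cT}$ once $c$ is reduced. Since the statement only asserts some universal $c$, this is routine.

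The main obstacle is Step 1. We must check that the coupling of Lemma \ref{lem:decorrelation} really is realized by closing marked edges under a common set of clocks, as its proof shows, rather than being an abstract coupling. This is what allows the $q$ localized processes to be simultaneously coupled to the same $\xi$ while remaining independent of each other.
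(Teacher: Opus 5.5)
Your proposal is correct and is essentially the paper's argument. The paper closes all $2q$ marked edges simultaneously and reruns the discrepancy count of Lemma~\ref{lem:decorrelation} with at most $4qT$ rings, whereas you apply that lemma to each $A_n$ under the common clocks and take a union bound. In both cases the comparison process on the enlarged boxes is the same family of independent closed SWTs, and your Step 2 makes explicit the factor $2$ (from comparing both the joint probability and the product) that the paper's sketch passes over.

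Your remark on $\Prob(k_T>4T)$ in Step 3 only handles the second term. A cleaner justification is to replace $c$ by $c/2$ (taking $c\le 1$) and use that the left-hand side is at most $1$. For $q\ge 2$ (the case $q=1$ is trivial), the new right-hand side exceeds $1$ as soon as $cT/2<\log 2$ or $c\ell^2/(2T)<\log 2$; otherwise each term absorbs its factor $2$.
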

We will not prove this result, since its proof is completely identical to that of Lemma \ref{lem:decorrelation}: we couple the process with one where marked edges separating the enlarged sets
\[
A_{n,\ell} := \{i\in \T_K, \enspace d(i,A_n)\leq \ell\}, \qquad 1 \le n \le q,
\]
have been closed down, and if there are less than $4qT$ discrepancies created before time $T$, and if none of them reaches one of the $A_n$ for all $1 \le n \le q$, both processes are identical on each $A_n$ up to time $T$.
Finally, the events $E_n$ for $1 \le n \le q$ are independent if all marked edges are closed down initially.

\section{\label{sec6}Proof of Theorem \ref{thm:main2}}

According to \eqref{eq:trans-corr-unif}, by abbreviating $\mathbb{Q}_\alpha := \mathbb{Q}_{\mu_{N,K_N^\alpha}}^N$, to prove Theorem \ref{thm:main2} it is enough to prove it for the SWT, namely,
\begin{equation}\label{eq:main2-wts}
\lim_{N\to\infty}\Q_\alpha \left( \ell_N^2N^{-\varepsilon} < \mathcal{T}_{\rm tr}^N < \ell_N^2N^{\varepsilon} \right) = 1 \qquad \text{for any} \quad \varepsilon > 0 .
\end{equation}
{We are now in a position to prove this estimate, namely} the lower bound in Section \ref{sec6.1} and the upper bound in Section \ref{sec6.2}.

\subsection{\label{sec6.1}Lower bound in Theorem \ref{thm:main2}}
First, we prove the lower bound 
\begin{equation}
\label{eq:lowerbound}
\lim_{N\to\infty}\Q_\alpha \left( \mathcal{T}_{\rm tr}^N > \ell_N^2N^{-\varepsilon} \right) = 1
\end{equation}
for the freezing time. Let $G$ be the $\xi(0)$-measurable event that there exists a good $q$-box (in the sense of \eqref{eq:goodbox}, with $q=\beta\log N$ given by Lemma \ref{lem:goodbox}) somewhere in the system.  If there are more than one good $q$-boxes, we choose one arbitrarily, e.g. the closest to the origin starting from the right, that we denote by $C=[A_1,\dots, A_q]$. According to Lemma \ref{lem:goodbox}, {such a box exists with high probability,} and to prove \eqref{eq:lowerbound} we can consider only what happens on $G$, meaning we will prove that for any $\varepsilon >0$, 
\begin{equation}
\label{eq:LBgoodbox}
\lim_{N\to\infty}\Q_\alpha \left( \mathcal{T}_{\rm tr}^N > \ell_N^2N^{-\varepsilon} \enspace \text{and} \enspace G \right) = 1 .
\end{equation}
On $G$, the (random) supercritical boxes $A_1,\dots, A_q$ are well defined, and we let $A_0:=\T_K\setminus C$. We keep track of the trajectories of each particle in the system, and  define for $r, r'\in\llbracket 0,q\rrbracket$ the total current $J_{r, r'}(t)$, up to time $t$, going from $A_r$ to $A_{r'}$, meaning that $J_{r, r'}(t)$ is the number of particles which were initially in $A_r$, and at time $t$ are located in $A_{r'}$. With the convention $A_{q+1}=A_0$, we then define for $0\leq r\leq q$ 
\[J_r(t)=J_{r, r+1}(t)-J_{r+1, r}(t) \qquad \text{and} \qquad S_r(t)=S_{A_r}(t)=\sum_{i\in A_r}\xi_i(t).\]
Then, for any time $t$, one of two things must  happen:
\begin{enumerate}
\item [i)] either a particle in the system has managed to travel a distance larger than $k_N$ before time $t$, (we call $F_t$ this event) or
\item [ii)] for any $0\leq r\leq q$, 
\begin{equation}\label{eq:Sr-Jr}
S_r(t)=S_r(0)+J_{r-1}(t)-J_r(t).
\end{equation}
\end{enumerate}
This claim is a simple consequence of the fact that each of the $A_r$'s is of size $k_N$ or larger (for $A_0$), so that if the first condition fails, conservation of mass implies that any mass lost or gained must have come from one of the neighboring boxes. Choose now $t=t_N:=\ell_N^2N^{-\varepsilon}$, by union bound on the at most $K$ particles,  and choosing $\delta=\varepsilon/4$ to define $k_N$, we obtain via a simple large deviations estimate that
\[
\Q_\alpha ({F_{t_N}})=\mathcal{O} \left( K e^{-c k_N^2/t_N} \right) = \mathcal{O} \left( N e^{-c N^{\varepsilon/2}} \right),
\]
where $c>0$ is a universal constant.
After the freezing time $\mathcal{T}_{\rm tr}^N$, we must have $S_r(t)\leq 0$ for any $0\leq r\leq q$. As a consequence, to prove \eqref{eq:LBgoodbox}, with the previous estimate, it is enough to show that
\begin{equation}
\label{eq:GFcsupercrit}
\lim_{N\to\infty}\Q_\alpha \left(G \cap {F_{t_N}^c} \cap \left\{ \max_{1\leq r\leq q} S_r(t_N)>0 \right\} \right)=1,
\end{equation}
meaning that with probability tending to one, at time $t_N$ one of the boxes $A_r$ is still strictly supercritical.
But since they are all initially supercritical, on $F^c$, to prove the latter, it is enough to find $1\leq r<r'\leq q$ such that 
\begin{equation}
\label{eq:oppositecurrents}
J_{r}(t_N)\geq 0 \qquad \text{and} \qquad J_{r'}(t_N)\leq 0.
\end{equation}
Indeed, the boxes in between these two currents were initially strictly supercritical, and if the previous condition holds, at least one of them remained strictly supercritical up to time $t_N$ due to \eqref{eq:Sr-Jr}, and in particular cannot have frozen.

We now only need to prove that \eqref{eq:oppositecurrents} holds with probability tending to one. To do so, we split $C$ into groupings of four $A_j$'s, namely for $1\leq r \le q/4$ \[\widetilde{A}_r=[A_{4r-3},A_{4r-2},A_{4r-1},A_{4r}],\] and  we denote by $\widetilde{J}_r(t)=J_{4r-2}(t)$ the current crossing the middle of $\widetilde{A}_r$ up to time $t$.  Define the events
\[
G_r^+= G\cap {F_{t_N}^c}\cap \left\{ \widetilde{J}_r(t_N)\geq 0 \right\} \qquad \text{and}\qquad
G_r^-= G\cap {F_{t_N}^c}\cap \left\{ \widetilde{J}_r(t_N)\leq 0 \right\}.
\]
Note that $G_r^+\cup G_r^-=G\cap F^c$, and by symmetry $\Q_\alpha (G_r^+)=\Q_\alpha (G_r^-)$. This implies 
\begin{equation}
    \Q_\alpha (G_r^\pm)\ge \frac{1}{2}\Q_\alpha (F^c)\ge\frac{1}{4}
\end{equation}
for $N$ large enough, by Lemma \ref{lem:SRW}.
%Note that on the event $G \cap F^c$ the initial distribution of particles in the $\widetilde{A}_r$'s is invariant under symmetry w.r.t. the center of the box. Because of that, by the decorrelation estimate stated in Lemma \ref{lem:decorrelation}, applied to the sets $D_r:=[A_{4r-2},A_{4r-1}]$, to the time  $T=t_N$ and $\ell=k_N$, w.h.p. what happens inside $D_r$ does not depend on the distribution outside of $\widetilde{A}_r$, and is therefore symmetric w.r.t. the center of the box, thus yielding that for any $1\leq r\leq q/4$
%\[
%\Q_{N,K} \left( G_r^\pm \right) \geq \frac12+\mathcal{O} \left( t_Ne^{-c N^{\varepsilon/2}} \right) \geq \frac14.
%\]
Furthermore, since the current $\widetilde{J}_r(t_N)$ is measurable w.r.t. the trajectory of the configuration in $D_r := A_{4r-2} \cup A_{4r-1}$ (actually, it is measurable w.r.t. the trajectory in the two sites at the center of $\widetilde A_r$), we can apply Corollary \ref{cor:decorrelation} to $\ell=k_N$, $T=t_N$, the boxes $(D_r)_{r\leq q/4}$, and to the events $E_r=G_r^\pm$. We  obtain that the probability to find a non-negative current in the first half of the boxes and a non-positive one in the second half is lower bounded by 
\[
1- 2 \left( \frac34 \right)^{q/4} + \mathcal{O} \left( qt_Ne^{-ck_N^2/t_N}+q e^{-ct_N} \right) \xrightarrow{N \to \infty} 1,
\]
which proves that \eqref{eq:oppositecurrents} occurs with probability tending to one, and therefore proves \eqref{eq:GFcsupercrit}.

\subsection{\label{sec6.2}Upper bound in Theorem \ref{thm:main2}}
Finally, we consider the upper bound.
We start by treating the case of large $\alpha\geq 1/2$. In this case, we want to prove that the freezing time is less than $N^{2+\varepsilon}$ with probability tending to $1$. The uniform estimate given in \cite[Theorem 1.1]{EM24} implies that for any diverging sequence $t_K,$ and starting from any initial distribution $\sigma$ of the SWT on $\T_K$, we have
\[
\limsup_{N\to\infty} \Q_\sigma \left( \mathcal{T}_{\rm tr}^N \geq t_K K^2\log K \right) = 0,
\]
which proves in particular the upper bound for $\alpha\geq 1/2$ if one chooses 
\[t_K=N^{2+\varepsilon}/ (K^2\log K) \geq C N^{\varepsilon}/\log K\to\infty \qquad \text{as} \quad N \to \infty.\]

We now consider the case of small $\alpha<1/2$, where we are far enough away from criticality to affect the freezing time. Our objective is to show that for any $\varepsilon>0$,
\[
\lim_{N\to\infty}\Q_{N,K} \left( \mathcal{T}_{\rm tr}^N<\ell_N^2N^{\varepsilon} \right) = 1 .
\]
In a transient configuration $\xi \in \Gamma_{N,K}$, for any site $i$ occupied by a particle, we denote by $a_i<i$, $b_i>i$ its two closest traps to its left and right, meaning that the configuration $\xi$ on $\{a_i+1, \dots, b_i-1\}$ contains no traps, and $\xi_{a_i},\xi_{b_i}<0$. Then, we can write  for any $t$, by translation invariance of the measure $\Q_\alpha$ and a union bound over the positions of the particles and the associated closest traps,
\begin{align*}
\Q_\alpha (\xi(t) \enspace & \text{is transient} ) \leq K^3 \sup_{a,b\in \T_K} \Q_\alpha (\xi_0(t)=1, \enspace  a_0=a, \enspace b_0=b)\\
& \leq K^3 \sup_{a,b\in \T_K}|b-a| \, \Q_\alpha ( [ a+1,b-1] \enspace \text{was initially supercritical}) \, p_{a,b}(t),
\end{align*}
where $p_{a,b}(t)=e^{-ct/(b-a)^2}$ is an upper bound of the probability that a random walk remains in the segment $[ a+1,b-1 ] \subset \mathbb T_K$ up to time $t$, {which it must have because it was constrained by the two traps.}

\begin{itemize}
\item For $|b-a| \leq \ell_N N^{\varepsilon/4}$, we have $p_{a,b}(t_N)=\mathcal{O}(e^{-cN^{\varepsilon/2}})$, so that the corresponding contribution above vanishes.
\item For $|b-a|>\gamma_N := \ell_N N^{\varepsilon/4}$, we define $A= [ a+1,b-1 ]$
and argue as done in the proof of Lemma \ref{lem:goodbox} that there exists a global constant $C>0$ such that
\[
\mu_\alpha (S_A\ge 0) \le C \sqrt{N} \, \pi_\alpha (S_A \ge 0) = C \sqrt{N} \, \pi_\alpha \left( \frac1{\gamma_N} \sum_{i=1}^{\gamma_N} \left[ 1 - \xi(i) - \frac{N-K}K \right] \ge \frac{2K-N}K \right) .
\]
Recall that $1-\xi(i)$, $i \in \mathbb T_K$, are i.i.d. geometric random variables of parameter ${K/N}$. By the large deviations estimate for geometric random variables given in \eqref{eq:LDP-geo}, since ${K/N} \in (\frac13,\frac23)$ and $\frac{2K-N}K \in (0,1)$ for large enough $N$, we have
\[
\pi_\alpha \left( \frac1{\gamma_N} \sum_{i=1}^{\gamma_N} \left[ 1 - \xi(i) - \frac{N-K}K \right] \ge \frac{2K-N}K \right) \le e^{- C \gamma_N \left( \frac{2K-N}K \right)^2},
\]
where $C>0$ is a global constant. As $\gamma_N (\frac{2K-N}K)^2 \simeq N^{\epsilon/4}$, we have proved that 
\[
\limsup_{N\to\infty} \, K^3 \sup_{\substack{a, b \in \mathbb{T}_K \\ |b-a|>\ell_N N^{\varepsilon/4} } }|b-a| \, \Q_\alpha ([ a+1,b-1 ] \enspace \text{was initially supercritical})=0,
\]
as wanted.
\end{itemize}
%and use once again the upper bound of Lemma \ref{lem:boundneghyper} to obtain that
%\begin{equation*}
%\mu_{N,K}(S_A\geq 0)=\mu_{N,K}(P_A\leq k)\leq C N^2 \sum_{p=0}^k \exp\Big\{-(p+k-1)H(\rho_A\mid \bar\rho)\Big\} ,
%\end{equation*}
%\end{itemize}
%where for $0\leq p\leq k$, 
%\[
%\rho_A=\rho_A(p):=\frac{k-1}{p+k-1}\geq\frac{1}{2}-\frac{1}{2k} \qquad \text{and} \qquad  \bar\rho=\frac{K-1}{N-1} \sim \frac 12 -N^{-\alpha}.
%\]
%Since $\ell_N=N^{2\alpha}$, we have in particular $k\gg N^{\alpha}$, therefore for any  $0\leq p\leq k$, $|\rho_A-\bar\rho|\geq N^{-\alpha}/3$, which yields by convexity of $H$ that for some constant $c>0$,
%\[H(\rho_A\mid \bar\rho)\geq cN^{-2\alpha}.\]
%As a consequence, $kH(\rho_A\mid \bar\rho)\geq cN^{\varepsilon/4}$, and we have proved that 
%\[
%\limsup_{N\to\infty} \, (K_N^\alpha)^3\sup_{\substack{a, b \in \mathbb{T}_K \\ |b-a|>\ell_N N^{\varepsilon/4} } }|b-a| \, \Q_{N,K}(\rrbracket a,b\llbracket \enspace \text{was initially supercritical})=0,
%\]
%as wanted.
Combining these two cases concludes the proof of the upper bound part of Theorem \ref{thm:main2}.

\begin{proof}[Proof of Theorem \ref{thm:main2}]
The bijection between FEP and SWT, in particular the identity \eqref{eq:trans-corr-unif}, implies that Theorem \ref{thm:main2} holds true if we prove that \eqref{eq:main2-wts} holds. This is already verified in Sections \ref{sec6.1} (lower bound) and \ref{sec6.2} (upper bound).
\end{proof}

\begin{acknowledgement*}
SK and SL would like to {thank} Universit\'e Claude Bernard Lyon 1 (UCBL) and Centre International de Rencontres Math\'ematiques (CIRM) for their warm hospitalities during their stay.
OB and CE were supported by  ANR grant
MICMOV (ANR-19-CE40-0012) of the French National
Research Agency (ANR).
SK was supported by the National Research Foundation of Korea (NRF) grant funded by the Ministry of Science and ICT (No. RS-2025-00518980), the Yonsei University Research Fund of 2025 (2025-22-0133),
and the POSCO Science Fellowship of POSCO TJ Park Foundation.
SL was supported by the National Research Foundation of Korea (NRF) grant funded by the Ministry of Science and ICT (No. RS-2023-NR076621 and RS-2025-23525546).
\end{acknowledgement*}

\appendix

\section{\label{secA}Correspondence between l-FEP and l-SWT}
In this section, we describe the correspondence between the FEP dynamics and the so-called SSEP with traps (SWT), first introduced in \cite[Section 1.1]{EM24}. The key mechanism is to label each particle in a FEP configuration in increasing order and associate each particle with a site in the induced SWT configuration, whose state is determined by the number of empty sites between that particle and the next one. To build the FEP and SWT dynamics jointly, we keep in memory the labels of the FEP particles in $\mathbb{T}_N$.

The following notation will be useful: for $N\in\mathbb N$ and $i\in\{1,\ldots, N\}$, let $\delta_i\in\{0,1\}^{\T_N}$ be the configuration such that
\begin{equation}
    \delta_i(j)=\mathbf{1}_{i=j}\quad \forall j\in\{1,\ldots,N\}.
\end{equation}
Note that $N$ is omitted from the notation, but will be clear from context.

\subsubsection*{Labeled FEP}

\begin{figure}
\begin{tikzpicture}
\begin{scope}[scale=0.6]
\draw (0,0)--(12,0);
\foreach \i in {0,...,12} { \draw[thick] (\i,-0.1)--(\i,0.1); }
\foreach \i in {0,1,4,6,7,8,10} { \draw (\i+0.5,0.5) circle (0.4); }
\draw (0.5,0.5) node{$7$};
\draw (1.5,0.5) node{$1$};
\draw (4.5,0.5) node{$2$};
\draw (6.5,0.5) node{$3$};
\draw (7.5,0.5) node{$4$};
\draw (8.5,0.5) node{$5$};
\draw (10.5,0.5) node{$6$};
\draw[ultra thick] (0,-0.2)--(0,0.2); \draw[ultra thick] (12,-0.2)--(12,0.2);
\draw[red,thick,->] (6.5,1.1) sin (6,1.3) cos (5.5,1.1);
\end{scope}

\draw[thick,-latex] (7.8,0)--(9.4,0); \draw (8.6,0.1) node[above]{$\Upsilon$};

\begin{scope}[shift={(10,0)},scale=0.6]
\draw (0,0)--(7,0);
\foreach \i in {0,...,7} { \draw[thick] (\i,-0.1)--(\i,0.1); }
\foreach \i in {2,3,6} { \draw (\i+0.5,0.5) circle (0.4); }
\foreach \j in {0} { \draw[] (0,-\j)--(0,-\j-1)--(1,-\j-1)--(1,-\j); }
\draw[ultra thick] (0,-0.2)--(0,0.2); \draw[ultra thick] (7,-0.2)--(7,0.2);
\draw[blue,thick,->] (2.5,1.1) sin (2,1.3) cos (1.5,1.1);
\draw (7.1,0) node[right]{, $2$};
\end{scope}

\begin{scope}[shift={(0,-2)},scale=0.6]
\draw (0,0)--(12,0);
\foreach \i in {0,...,12} { \draw[thick] (\i,-0.1)--(\i,0.1); }
\foreach \i in {0,1,4,5,7,8,10} { \draw (\i+0.5,0.5) circle (0.4); }
\draw (0.5,0.5) node{$7$};
\draw (1.5,0.5) node{$1$};
\draw (4.5,0.5) node{$2$};
\draw (5.5,0.5) node{$3$};
\draw (7.5,0.5) node{$4$};
\draw (8.5,0.5) node{$5$};
\draw (10.5,0.5) node{$6$};
\draw[ultra thick] (0,-0.2)--(0,0.2); \draw[ultra thick] (12,-0.2)--(12,0.2);
\draw[red,thick,->] (8.5,1.1) sin (9,1.3) cos (9.5,1.1);
\end{scope}

\draw[thick,-latex] (7.8,-2)--(9.4,-2); \draw (8.6,0.1-2) node[above]{$\Upsilon$};

\begin{scope}[shift={(10,-2)},scale=0.6]
\draw (0,0)--(7,0);
\foreach \i in {0,...,7} { \draw[thick] (\i,-0.1)--(\i,0.1); }
\foreach \i in {1,3,6} { \draw (\i+0.5,0.5) circle (0.4); }
\foreach \j in {0} { \draw[] (0,-\j)--(0,-\j-1)--(1,-\j-1)--(1,-\j); }
\draw[ultra thick] (0,-0.2)--(0,0.2); \draw[ultra thick] (7,-0.2)--(7,0.2);
\draw[blue,thick,->] (3.5,1.1) sin (4,1.3) cos (4.5,1.1);
\draw (7.1,0) node[right]{, $2$};
\end{scope}

\begin{scope}[shift={(0,-4)},scale=0.6]
\draw (0,0)--(12,0);
\foreach \i in {0,...,12} { \draw[thick] (\i,-0.1)--(\i,0.1); }
\foreach \i in {0,1,4,5,7,9,10} { \draw (\i+0.5,0.5) circle (0.4); }
\draw (0.5,0.5) node{$7$};
\draw (1.5,0.5) node{$1$};
\draw (4.5,0.5) node{$2$};
\draw (5.5,0.5) node{$3$};
\draw (7.5,0.5) node{$4$};
\draw (9.5,0.5) node{$5$};
\draw (10.5,0.5) node{$6$};
\draw[ultra thick] (0,-0.2)--(0,0.2); \draw[ultra thick] (12,-0.2)--(12,0.2);
\draw[red,thick,->] (1.5,1.1) sin (2,1.3) cos (2.5,1.1);
\end{scope}

\draw[thick,-latex] (7.8,-4)--(9.4,-4); \draw (8.6,0.1-4) node[above]{$\Upsilon$};

\begin{scope}[shift={(10,-4)},scale=0.6]
\draw (0,0)--(7,0);
\foreach \i in {0,...,7} { \draw[thick] (\i,-0.1)--(\i,0.1); }
\foreach \i in {1,4,6} { \draw (\i+0.5,0.5) circle (0.4); }
\foreach \j in {0} { \draw[] (0,-\j)--(0,-\j-1)--(1,-\j-1)--(1,-\j); }
\draw[ultra thick] (0,-0.2)--(0,0.2); \draw[ultra thick] (7,-0.2)--(7,0.2);
\draw (7.1,0) node[right]{, $2$};
\draw[blue,thick,->] (6.5,1.1) sin (7,1.3) cos (7.5,1.1);
\end{scope}

\begin{scope}[shift={(0,-6)},scale=0.6]
\draw (0,0)--(12,0);
\foreach \i in {0,...,12} { \draw[thick] (\i,-0.1)--(\i,0.1); }
\foreach \i in {0,2,4,5,7,9,10} { \draw (\i+0.5,0.5) circle (0.4); }
\draw (0.5,0.5) node{$7$};
\draw (2.5,0.5) node{$1$};
\draw (4.5,0.5) node{$2$};
\draw (5.5,0.5) node{$3$};
\draw (7.5,0.5) node{$4$};
\draw (9.5,0.5) node{$5$};
\draw (10.5,0.5) node{$6$};
\draw[ultra thick] (0,-0.2)--(0,0.2); \draw[ultra thick] (12,-0.2)--(12,0.2);
\end{scope}

\draw[thick,-latex] (7.8,-6)--(9.4,-6); \draw (8.6,0.1-6) node[above]{$\Upsilon$};

\begin{scope}[shift={(10,-6)},scale=0.6]
\draw (0,0)--(7,0);
\foreach \i in {0,...,7} { \draw[thick] (\i,-0.1)--(\i,0.1); }
\foreach \i in {1,4} { \draw (\i+0.5,0.5) circle (0.4); }
\foreach \i in {0} { \draw (\i+0.5,-0.5) circle (0.4); }
\foreach \j in {0} { \draw[] (0,-\j)--(0,-\j-1)--(1,-\j-1)--(1,-\j); }
\draw[ultra thick] (0,-0.2)--(0,0.2); \draw[ultra thick] (7,-0.2)--(7,0.2);
\draw (7.1,0) node[right]{, $3$};
\end{scope}
\end{tikzpicture}\caption{\label{figA.1}The correspondence between l-FEP configurations in $\widetilde\Sigma_{N,K}$
and l-SWT configurations in $\widetilde\Gamma_{N,K}$, with $N=12$ and $K=7$, obtained along three jumps indicated by red (in l-FEP) and blue (in l-SWT) arrows.}
\end{figure}

First, let us consider the following labeled version of FEP on $\mathbb T_N$. Fix an integer $K \in [1,N-1]$ which represents the number of particles (we exclude the degenerate cases $K\in\{0,N\}$). Recall \eqref{eq:SigmaNK-def} and define
\begin{equation}\label{eq:tSigmaNK-def}
\widetilde\Sigma_{N,K} := \left\{ X = (X_1 ,X_2,\dots,X_K) \in \mathbb{T}_N^K : X_1, X_2, \dots, X_K \enspace \text{are in increasing order} \right\}.
\end{equation}
Then, define a dynamics $\{\widetilde\eta_t\}_{t \ge 0}$ in $\widetilde\Sigma_{N,K}$, the \emph{labeled} FEP or l-FEP, which is generated by
\begin{align*}
\widetilde{\mathcal{L}}_N F( \widetilde\eta ) =  \sum_{i=1}^K \, \big( & {\bf 1}_{\{{\widetilde\eta}_{i-1}={\widetilde\eta}_i-1,\, {\widetilde\eta}_{i+1} \ne {\widetilde\eta}_i+1\}} ( F({\widetilde\eta}+\delta_i) - F({\widetilde\eta})) \\
& + {\bf 1}_{\{{\widetilde\eta}_{i+1} = {\widetilde\eta}_i+1,\, {\widetilde\eta}_{i-1} \ne {\widetilde\eta}_i-1\}} ( F({\widetilde\eta}-\delta_i) - F({\widetilde\eta}))  \big) \, .
\end{align*}
Here, adding (resp. subtracting) $\delta_i$ to $\widetilde\eta$ indicates moving the $i$-th particle in $\widetilde\eta$ to the right (resp. left). Note that $\widetilde\eta_{K+1} := \widetilde\eta_1$ and $\widetilde\eta_0 := \widetilde\eta_K$.

Defining $\Phi:\widetilde\Sigma_{N,K} \to \Sigma_{N,K}$ as
\begin{equation}
\Phi ( \widetilde\eta ) = \sum_{i=1}^K \delta_{{\widetilde\eta}_i} \in \Sigma_{N,K} \qquad \text{for} \quad \widetilde\eta \in \widetilde\Sigma_{N,K},
\end{equation}
it is easy to verify that the projected process $\{\Phi({\widetilde\eta}_t)\}_{t \ge 0}$ is exactly the FEP in $\Sigma_{N,K}$ (cf. \eqref{eq:SigmaNK-def}). Note that $\Phi$ is surjective but not injective. In fact, $\Phi(\widetilde\eta) = \eta$ for exactly $K$ configurations $\widetilde\eta \in \widetilde\Sigma_{N,K}$.

\begin{notation}\label{not:inv-image}
In the following, for $\eta\in\Sigma_{N,K}$, we use the (abusive) notation $\Phi^{-1}(\eta)$ to indicate the pre-image of $\eta$ such that $\Phi^{-1}(\eta)_1=\min_{i=1,\ldots,N}\Phi^{-1}(\eta)_i$.
\end{notation}

We say that $\widetilde\eta\in\widetilde\Sigma_{N,K}$ is transient (resp.\@ frozen, resp.\@ ergodic) if $\Phi(\widetilde\eta)$ is, and define $\mathcal T^N_{\rm}$ the transience time for the l-FEP accordingly.

\subsubsection*{SSEP with traps (SWT)}
Define 
\begin{equation}\label{eq:SWT-space}
\Gamma_{N,K} := \left\{ \xi \in \mathbb{Z}_{\le 1}^{\mathbb{T}_K} : \sum_{i \in \mathbb{T}_K} \xi(i) = 2K-N \right\},
\end{equation}
where $\mathbb{Z}_{\le 1} = \{n \in \mathbb{Z} : n \le 1 \}$. Then, the SWT dynamics $\{\xi_t\}_{t\ge0}$ in $\Gamma_{N,K}$ is generated by
\[
\mathscr{L}_K g (\xi) = \sum_{\substack{i,j \in \mathbb{T}_K \\ |i-j| = 1}} \, {\bf 1}_{\{\xi(i) = 1, \, \xi(j) \le 0\}} ( g(\xi-\delta_i + \delta_j) - g(\xi)).
\]

We define the transient set for the SWT as
\[
\Gamma_{N,K}^{\rm tr} :=\{\xi\in\Gamma_{N,K} : \xi(i)=1, \enspace \xi(j)<0 \quad  \exists i,j\in\T_K \},% \qquad 
%\Gamma_{N,K}^{\rm rec} := \Gamma_{N,K} \setminus \Gamma_{N,K}^{\rm tr},
\]
and the transience time $\mathcal T^N_{\rm tr}$ accordingly.

\subsubsection*{Labeled SWT}
Let $\widetilde\Gamma_{N,K} := \Gamma_{N,K} \times \mathbb{T}_N$. Define a dynamics $\{{\widetilde\xi}_t\}_{t\ge0}$ in $\widetilde\Gamma_{N,K}$, with $\widetilde\xi_t = (\xi_t , x_t) $, the \emph{labeled} SWT or l-SWT, which is generated by
\begin{align*}
\widetilde{\mathscr{L}}_{K} G (\xi,x) = & \sum_{\substack{i,j \in \mathbb{T}_K \\ |i-j| = 1, \, \{i,j\} \ne \{N,1\}}} \, {\bf 1}_{\{\xi(i) = 1, \, \xi(j) \le 0\}} ( G(\xi-\delta_i + \delta_j , x) - G(\xi,x)) \\
& + {\bf 1}_{\{\xi(K) = 1, \, \xi(1) \le 0\}} ( G(\xi-\delta_K + \delta_1 , x+1) - G(\xi,x)) \\
& + {\bf 1}_{\{\xi(1) = 1, \, \xi(K) \le 0\}} ( G(\xi-\delta_1 + \delta_K , x-1) - G(\xi,x)) .
\end{align*}
Clearly, via the canonical projection $\Psi : \widetilde\Gamma_{N,K} \to \Gamma_{N,K}$, the l-SWT becomes the original SWT. As above, we say that $\widetilde\xi$ is transient iff $\Psi(\widetilde\xi)$ is and define the transience time $\mathcal T^N_{\rm tr}$ accordingly.

\subsubsection*{Bijection between $\widetilde\Sigma_{N,K}$ and $\widetilde\Gamma_{N,K}$}

Define a map $\Upsilon : \widetilde\Sigma_{N,K} \to \widetilde\Gamma_{N,K}$ as follows. For each $\widetilde\eta  \in \widetilde\Sigma_{N,K}$, the corresponding element $\Upsilon({\widetilde\eta}) \in \widetilde\Sigma_{N,K}$ is given as $\Upsilon({\widetilde\eta} )= (\xi_{\widetilde\eta}, \widetilde\eta_1)$ where
\[
(\xi_{\widetilde\eta})_i := \widetilde\eta_i - \widetilde\eta_{i+1} + 2 \in \mathbb{Z}_{\le 1} \qquad \text{for} \quad i \in \mathbb{T}_K.
\]
See Figure \ref{figA.1} for this map. It is clear that $\Upsilon$ is a one-to-one correspondence between $\widetilde\Sigma_{N,K}$ and $\widetilde\Gamma_{N,K}$, via $\Upsilon^{-1} : \widetilde\Gamma_{N,K} \to \widetilde\Sigma_{N,K}$ defined as
\[
\left( \Upsilon^{-1} (\xi,x) \right)_i := x + \sum_{n=1}^{i-1} (2-\xi_n) \qquad \text{for} \quad 1 \le i \le K.
\]
Note that $(\Upsilon^{-1}(\xi,x))_1 = x$.

\subsubsection*{Bijection between l-FEP and l-SWT}

\begin{claim*}
If $\{\widetilde\eta_t\}_{t\ge0}$ is an l-FEP trajectory in $\widetilde\Sigma_{N,K}$, the induced trajectory $\{ \Upsilon(\widetilde\eta_t) \}_{t\ge0}$ in $\widetilde\Gamma_{N,K}$ follows the law of an l-SWT trajectory.
\end{claim*}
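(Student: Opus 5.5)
We argue at the level of generators. Both processes are finite-state continuous-time Markov chains and $\Upsilon$ is a bijection, so it suffices to show that for every $\widetilde\eta\in\widetilde\Sigma_{N,K}$ the transition rates of the l-FEP out of $\widetilde\eta$ are mapped by $\Upsilon$ exactly onto the transition rates of the l-SWT out of $\Upsilon(\widetilde\eta)$. Equivalently, we check the identity $\widetilde{\mathcal L}_N (G\circ\Upsilon)=(\widetilde{\mathscr L}_K G)\circ\Upsilon$ for every $G:\widetilde\Gamma_{N,K}\to\mathbb R$. The image process $\Upsilon(\widetilde\eta_t)$ is then Markov with generator $\widetilde{\mathscr L}_K$, so it is an l-SWT.

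The core is a local computation. Write $(\xi,x)=\Upsilon(\widetilde\eta)$, so $\xi_i=2-(\widetilde\eta_{i+1}-\widetilde\eta_i)$. Particle $i$ of the l-FEP jumps right at rate $1$ exactly when $\widetilde\eta_{i-1}=\widetilde\eta_i-1$ and $\widetilde\eta_{i+1}\ne\widetilde\eta_i+1$. In SWT variables this reads $\xi_{i-1}=1$ and $\xi_i\le 0$. Under this move, $\widetilde\eta_i-\widetilde\eta_{i-1}$ increases by $1$ and $\widetilde\eta_{i+1}-\widetilde\eta_i$ decreases by $1$, so the new $\xi$ is $\xi-\delta_{i-1}+\delta_i$. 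This is precisely an SWT particle jumping from $i-1$ to $i$, either onto an empty site or into a trap, where it reduces the depth by one. Symmetrically, a left jump of particle $i$ needs $\xi_i=1$ and $\xi_{i-1}\le0$, and it yields $\xi-\delta_i+\delta_{i-1}$. The map $i\mapsto$ (edge $\{i-1,i\}$) is a bijection between FEP particle labels and SWT edges. Each admissible SWT jump across an edge therefore corresponds to exactly one admissible FEP move, both at rate $1$.

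The remaining check is the label coordinate $x=\widetilde\eta_1$. It changes only when particle $1$ moves. Particle $1$ corresponds to the edge $\{K,1\}$, using $\widetilde\eta_0=\widetilde\eta_K$, i.e.\ $\xi_K=2-(\widetilde\eta_1-\widetilde\eta_K)$ on the torus. A right move of particle $1$ is an SWT jump $K\to1$ with $x\mapsto x+1$, and a left move is a jump $1\to K$ with $x\mapsto x-1$. These are exactly the two boundary terms of $\widetilde{\mathscr L}_K$. The interior sum in $\widetilde{\mathscr L}_K$ is meant to exclude the edge $\{K,1\}$; the condition is typed as $\{N,1\}$, which we read as $\{K,1\}$.

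The only delicate step is this torus bookkeeping. We must check that $\sum_i\xi_i=2K-N$ is preserved and that the wrap-around differences are taken modulo $N$. Increasing order on $\mathbb T_N$ means $\widetilde\eta_{K+1}-\widetilde\eta_K$ is computed as $\widetilde\eta_1+N-\widetilde\eta_K$. We must also handle the degenerate small cases $K=1,2$, where the neighbours $i-1$ and $i+1$ coincide, separately.
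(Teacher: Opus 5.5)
Your proposal is correct and is essentially the paper's argument. The paper also matches each rate-$1$ move of particle $i$ with the SWT jump across the edge $\{i-1,i\}$ (under the conditions $\xi_{i-1}=1,\ \xi_i\le 0$ or the reverse), and the moves of particle $1$ produce the wrap-around jumps between $K$ and $1$ together with the label shifts $x\mapsto x\pm1$. Your generator/intertwining phrasing, your reading of the excluded edge $\{N,1\}$ as $\{K,1\}$, and your remarks on cyclic differences modulo $N$ and on the degenerate cases $K=1,2$ only make explicit bookkeeping that the paper leaves implicit.
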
 

To see this, notice that the jump $\widetilde\eta \to \widetilde\eta + \delta_i$ for $2 \le i \le k$, when $\widetilde\eta_{i-1} = \widetilde\eta_i-1$ and $\widetilde\eta_{i+1} \ne \widetilde\eta_i + 1$ (here $\widetilde\eta_{K+1} = \widetilde\eta_1$), corresponds to the jump
\[
\Upsilon_{\widetilde\eta} \to \Upsilon_{\widetilde\eta} + (-\delta_{i-1} + \delta_i , 0) \qquad \text{when} \quad (\xi_{\widetilde\eta})_{i-1} = 1, \quad (\xi_{\widetilde\eta})_i \le 0 .
\]
The jump ${\widetilde\eta} \to {\widetilde\eta}-\delta_i$ for $2 \le i \le k$, when ${\widetilde\eta}_{i+1} = {\widetilde\eta}_i + 1$ and ${\widetilde\eta}_{i-1} \ne {\widetilde\eta}_i -1 $, corresponds to
\[
\Upsilon_{\widetilde\eta} \to \Upsilon_{\widetilde\eta} + (-\delta_i + \delta_{i-1} , 0) \qquad \text{when} \quad (\xi_{\widetilde\eta})_i = 1, \quad (\xi_{\widetilde\eta})_{i-1} \le 0 .
\]
The jump ${\widetilde\eta} \to {\widetilde\eta} + \delta_1$, when ${\widetilde\eta}_K = {\widetilde\eta}_1 - 1$ and ${\widetilde\eta}_2 \ne {\widetilde\eta}_1 +1 $, corresponds to
\[
\Upsilon_{\widetilde\eta} \to \Upsilon_{\widetilde\eta} + (-\delta_K + \delta_1 , 1) \qquad \text{when} \quad (\xi_{\widetilde\eta})_K = 1, \quad (\xi_{\widetilde\eta})_1 \le 0 .
\]
The jump ${\widetilde\eta} \to {\widetilde\eta} - \delta_1$, when ${\widetilde\eta}_2 = {\widetilde\eta}_1 + 1$ and ${\widetilde\eta}_K \ne {\widetilde\eta}_1 -1 $, corresponds to
\[
\Upsilon_{\widetilde\eta} \to \Upsilon_{\widetilde\eta} + (-\delta_1 + \delta_K , -1) \qquad \text{when} \quad (\xi_{\widetilde\eta})_1 = 1, \quad (\xi_{\widetilde\eta})_K \le 0 .
\]
All types of jumps have rate $1$. This proves the claim.

\subsubsection*{Transience time between the dynamics}

In particular, the transience time in all four processes, FEP, l-FEP, l-SWT, and SWT, are all equivalent via the maps
\begin{equation}\label{eq:corres}
 \Sigma_{N,K} \xleftarrow{\Phi} \widetilde\Sigma_{N,K} \xrightarrow{\Upsilon} \widetilde\Gamma_{N,K} \xrightarrow{\Psi} \Gamma_{N,K}.
 \end{equation}

We may use this correspondence to study the transience time of the FEP via the transience time of the SWT in the following way. It is straightforward to check that, for any $\widetilde\eta\in\widetilde\Sigma_{N,K}$, $\widetilde\eta$ is transient iff $\Phi(\eta)$ is transient iff $\Upsilon(\widetilde\eta)$ is transient iff $\Psi\circ\Upsilon(\widetilde\eta)$ is transient. Consequently (recall Notation \ref{not:inv-image} for defining $\Phi^{-1}(\eta)$ for $\eta \in \Sigma_{N,K} \subset \Sigma_N$),
% Given an initial configuration $\eta \in \Sigma_{N,K} \subset \Sigma_N$, we may choose any ordering of the $K$ particles and label them to obtain $\Phi^{-1}\eta \in \widetilde\Sigma_{N,K}$ (with a slight abuse of notation). Then, the FEP trajectory starting from $\eta \in \Sigma_{N,K}$ can be identified with the SWT trajectory starting from $\Psi \Upsilon \Phi^{-1}\eta \in \Gamma_{N,K}$; more precisely, 
for any Borel set $B \subset \mathbb R$,
\begin{equation}\label{eq:trans-corr}
\mathbb{P}_\eta^N \left[ \mathcal{T}_{\rm tr}^N \in B \right] =  \widetilde{\mathbb{P}}_{\Phi^{-1}\eta}^N \left[ \mathcal{T}_{\rm tr}^N \in B \right]
= \widetilde{\mathbb{Q}}_{\Upsilon \Phi^{-1}\eta}^N \left[ \mathcal{T}_{\rm tr}^N \in B \right] = \mathbb{Q}_{\Psi \Upsilon \Phi^{-1}\eta}^N \left[ \mathcal{T}_{\rm tr}^N \in B \right],
\end{equation}
where the four laws, $\mathbb{P}_\cdot^N$, $\widetilde{\mathbb{P}}_\cdot^N$, $\widetilde{\mathbb{Q}}_\cdot^N$, and $\mathbb{Q}_\cdot^N$, denote the laws of the trajectories following the rules of the FEP, l-FEP, l-SWT, and SWT, respectively.

\subsubsection*{Initial uniform distributions}
Recall from \eqref{eq:nuNK-def} that $\nu_{N,K}$ denotes the uniform distribution on $\Sigma_{N,K}$. Denote by $\widetilde\nu_{N,K}$, $\widetilde\mu_{N,K}$, and $\mu_{N,K}$ the uniform distributions on $\widetilde\Sigma_{N,K}$, $\widetilde\Gamma_{N,K}$, and $\Gamma_{N,K}$, respectively. Then, it is clear  that
\[
\widetilde\nu_{N,K} \circ \Phi^{-1} = \nu_{N,K}, \qquad \widetilde\nu_{N,K} \circ \Upsilon^{-1} = \widetilde\mu_{N,K}, \qquad \text{and} \qquad \widetilde\mu_{N,K} \circ \Psi^{-1} = \mu_{N,K}
\]
(here $\Phi^{-1}$ and $\Psi^{-1}$ applied to measurable sets designate their preimage under $\Phi,\Psi$). Thus, the law of the transience time is preserved along the correspondence for uniform distributions, i.e.,
\begin{equation}\label{eq:trans-corr-unif}
\mathbb{P}_{\nu_{N,K}}^N \left[ \mathcal{T}_{\rm tr}^N \in B \right] =  \widetilde{\mathbb{P}}_{\widetilde\nu_{N,K}}^N \left[ \mathcal{T}_{\rm tr}^N \in B \right]
= \widetilde{\mathbb{Q}}_{\widetilde\mu_{N,K}}^N \left[ \mathcal{T}_{\rm tr}^N \in B \right] = \mathbb{Q}_{\mu_{N,K}}^N \left[ \mathcal{T}_{\rm tr}^N \in B \right] .
\end{equation}

\section{Estimates for simple random walks}\label{a:RW}

\subsection{Proof of Lemma \ref{l7}}\label{a:proof-l7}

%\coo{O: Here is an alternative proof.} \cosw{SW: Great, taken!}
Let us define $k_N:=\lceil\Lambda'\log N\rceil+1$ and
\begin{equation}
    A_i:=\{X_i=0, \enspace X_{i+1} = \dots = X_{i+k_N}=1\}.
\end{equation}
We estimate
\begin{align}
    P \left( \bigcup_{ i=1}^{M-k_N}A_i \right) \ge P\left(\bigcup_{j=0}^{\lfloor\frac{M}{k_N+1}\rfloor-1}A_{1+j(k_N+1)}\right)
    = 1-\left(1-(1-\rho)\rho^{k_N}\right)^{\lfloor\frac{M}{k_N+1}\rfloor-1}.
\end{align}
Since $\rho^{k_N} \ge \rho^{\Lambda' \log N +2} = \rho^2 N^{\Lambda'\log \rho}$, we have the desired estimate as soon as
\begin{equation}\label{eq:c6-def}
\Lambda'|\log\rho| < 1/2.
\end{equation}

\subsection{Proof of Lemma \ref{l2}}\label{a:proof-exp-moment}
Recall the asymmetric random walk $Y$ defined in \eqref{eq:Yn-def}, with $Y_0=0$.
Denote by $H_M$ the first hitting time of $\{-1,M\}$, such that $H_M \uparrow \tau_{-1}$ as $M \to \infty$. Let $g(i) := E_i [e^{\lambda H_M}]$ for $-1 \le i \le M$, such that $\lim_{M \to \infty} g(0) = E_0 [e^{\lambda \tau_{-1}}]$ by the monotone convergence theorem. Then, $g(-1) = g(M) = 1$ and
\[
g(i) = (1-\rho) ( g(i-1) e^\lambda ) + \rho ( g(i+1) e^\lambda ) \qquad \text{for} \quad 0 \le i \le M-1,
\]
via the strong Markov property. Defining
\[
a := \frac{e^{-\lambda} + \sqrt{e^{-2\lambda}-4\rho(1-\rho)}}{2\rho} \qquad \text{and} \qquad b := \frac{e^{-\lambda} + \sqrt{e^{-2\lambda}-4\rho(1-\rho)}}{2(1-\rho)},
\]
we obtain $g(i)-ag(i-1) = b ( g(i+1) - ag(i))$. Solving these equations for $-1 \le i \le M$, if $e^{-2\lambda} > 4\rho(1-\rho)$ then
\[
g(0) = a - \left( a - \frac1b \right) \frac{1 - a^{-M-1}}{1-(ab)^{-M-1}} \xrightarrow{M \to \infty} a - \left( a - \frac1b \right) = \frac1b,
\]
where in the limit we used $a>1$ and $ab>1$. On the other hand, if $e^{-2\lambda} = 4\rho(1-\rho)$ then
\[
g(0) = a - \frac{a \left( 1-a^{-M-1} \right)}{M+1} \xrightarrow{M \to \infty} a = \frac1b,
\]
since in this case $a>1$ and $ab=1$. Simplifying $1/b$, we obtain the result.

\subsection{\label{secB}Tail probability of hitting times}

Here, we state and prove (for completeness) a classical tail probability
estimate of hitting times for the simple random walk on $\mathbb{Z}$.
Roughly, it states that the probability of not escaping an interval
of size $M$ until time $\gamma$ is asymptotically close to
$e^{-\frac{c\gamma}{M^{2}}}$ for some constant $c>0$.
\begin{lem}
\label{lem:SRW}Consider a continuous-time simple random walk $\{x_{t}\}_{t\ge0}$
on $\mathbb{Z}$ with jump rate $1$ on both directions. Denote by
$P_{x}$ the law of $\{x_{t}\}_{t\ge0}$ starting from $x$. For each
$M\in\mathbb{N}$, denote by $\tau_{M}$ the first hitting time of
$\{-M,M\}$. For all $\gamma>0$, the following estimates hold.
\begin{enumerate}
\item 
\[
\sup_{x\in(-M,M)}P_{x}(\tau_{M}>\gamma)\le2\exp\left(-\frac{2\pi\gamma}{9M^{2}}\right).
\]
\item 
\[
P_{0}(\tau_{M}>\gamma)\ge\frac{1}{2}\exp\left(-\frac{\pi^{2}\gamma}{4M^{2}}\right).
\]
\end{enumerate}
\end{lem}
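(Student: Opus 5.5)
The natural route is spectral: kill the walk on $\partial(-M,M)=\{-M,M\}$ and use the eigen-decomposition of the killed generator. On the interior $\{-M+1,\dots,M-1\}$ (which has $2M-1$ sites), the generator of the rate-$1$-per-direction walk is $\Delta f(x)=f(x+1)+f(x-1)-2f(x)$ with Dirichlet conditions $f(\pm M)=0$. Its eigenfunctions are
\[
\phi_k(x)=\sin\Big(\frac{k\pi(x+M)}{2M}\Big),\qquad 1\le k\le 2M-1,
\]
with eigenvalues $-\lambda_k$, where $\lambda_k=2-2\cos\frac{k\pi}{2M}=4\sin^2\frac{k\pi}{4M}$. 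The survival probability is then
\[
P_x(\tau_M>\gamma)=\sum_y p^{D}_\gamma(x,y)=\big(e^{\gamma\Delta_D}\mathbf 1\big)(x).
\]

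For part (1), expanding in the orthonormal basis only gives a crude $\sqrt M$ factor, so I would avoid it. Instead I would use a positive supersolution. Take $h(x)=\cos\frac{\pi x}{2M'}$ with $M'$ slightly larger than $M$; a convenient choice is $M'=\frac32 M$, which is where the constant $9$ comes from. This $h$ is positive on $[-M,M]$ and satisfies $\Delta h=-4\sin^2\frac{\pi}{4M'}\,h$. The process
\[
e^{\mu t}\,h(x_{t\wedge\tau})
\]
is then a supermartingale. This yields
\[
P_x(\tau_M>\gamma)\le e^{-\mu\gamma}\,\frac{h(x)}{\min_{[-M,M]}h},
\]
where $\min h\ge \cos\frac{\pi}{3}=\frac12$. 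That is where the factor $2$ comes from. To make the exponent explicit, use $\sin u\ge \frac{2}{\pi}u$ for small $u$, which gives
\[
\mu=4\sin^2\frac{\pi}{6M}\ge \frac{4}{9M^2}\ge\frac{2\pi}{9M^2}\cdot\frac{2}{\pi}.
\]
The target constant $2\pi/9$ is weaker than the natural $\pi^2/9$, so any such elementary bound suffices. Small $M$ (e.g.\ $M=1$) needs a direct check: for $M=1$ the rate of leaving $0$ is $2$, so $P_0(\tau_1>\gamma)=e^{-2\gamma}$, which is fine.

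For part (2), I would use the principal Dirichlet eigenfunction $\phi_1(x)=\cos\frac{\pi x}{2M}$ itself. It is exactly harmonic with rate $\lambda_1=4\sin^2\frac{\pi}{4M}\le\frac{\pi^2}{4M^2}$, and it vanishes at $\pm M$. Hence $e^{\lambda_1 t}\phi_1(x_{t\wedge\tau})$ is a true martingale, and
\[
1=\phi_1(0)=E_0\big[e^{\lambda_1\gamma}\phi_1(x_\gamma)\mathbf 1_{\tau>\gamma}\big]\le e^{\lambda_1\gamma}P_0(\tau>\gamma),
\]
using $\phi_1\le1$. This gives $P_0(\tau_M>\gamma)\ge e^{-\pi^2\gamma/(4M^2)}$, even without the factor $\frac12$.

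The main obstacle is bookkeeping in (1). One must verify the supermartingale property on the boundary-adjacent sites, where the jump lands on $\pm M$. This is handled because $h(\pm M)>0$, so stopping only helps. One must also check that $\sin^2$ is bounded below uniformly in $M\ge1$ with the claimed constant.
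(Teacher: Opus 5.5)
In part (1) the final numerical step does not give the stated constant. Jordan's inequality $\sin u\ge \frac{2}{\pi}u$ yields $\mu=4\sin^2\frac{\pi}{6M}\ge\frac{4}{9M^2}$. Your last expression $\frac{2\pi}{9M^2}\cdot\frac{2}{\pi}$ is just $\frac{4}{9M^2}$ again, and since $\frac{2}{\pi}<1$ it is strictly \emph{smaller} than the required $\frac{2\pi}{9M^2}$. So the remark that ``any such elementary bound suffices'' is false: as written, you only prove (1) with $\frac{4}{9}$ in place of $\frac{2\pi}{9}$.

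The fix is a slightly sharper elementary inequality. Since $u=\frac{\pi}{6M}\in(0,\frac{\pi}{6}]$, concavity of $\sin$ gives $\sin u\ge\frac{3}{\pi}u$, hence $\mu\ge\frac{1}{M^2}\ge\frac{2\pi}{9M^2}$. Alternatively, as in the paper, $1-\cos\beta\ge\beta^2/\pi$ for $\beta=\frac{\pi}{3M}\le\frac{\pi}{3}$ gives $\mu=2(1-\cos\beta)\ge\frac{2\pi}{9M^2}$. Either bound holds uniformly in $M\ge 1$, so the separate check at $M=1$ is unnecessary.

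There is also a smaller slip: $e^{\mu t}h(x_{t\wedge\tau})$ is not a supermartingale, because after $\tau$ it increases deterministically ($h(\pm M)=\frac12>0$). You want one of two processes instead:
\begin{itemize}
\item the martingale $e^{\mu(t\wedge\tau)}h(x_{t\wedge\tau})$, which works because $\Delta h=-\mu h$ on all of $\mathbb{Z}$; you then discard the nonnegative contribution of $\{\tau\le\gamma\}$;
\item the killed process $e^{\mu t}h(x_t)\mathbf{1}_{\{\tau>t\}}$, which is a supermartingale precisely because $h(\pm M)>0$.
\end{itemize}
Either one yields the inequality $P_x(\tau_M>\gamma)\le e^{-\mu\gamma}h(x)/\min_{[-M,M]}h$ that you wrote.

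With these corrections, your (1) is the paper's argument carried out directly in continuous time. The paper uses the same test function $\cos(\beta y)$ with $\beta=\frac{\pi}{3M}$. It shows that $\cos(\beta y_n)/\cos^n\beta$ is a martingale for the embedded jump chain, and then Poissonizes: $\sum_n e^{-2\gamma}\frac{(2\gamma)^n}{n!}\cos^n\beta=e^{-2\gamma(1-\cos\beta)}=e^{-\mu\gamma}$.

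Your (2), by contrast, is genuinely different and better. The paper invokes Spitzer's explicit formula for the jump chain's survival probability $\widehat P_0(\tau_M>n)$, bounds the resulting alternating series by $\frac12(1-\frac{\pi^2}{8M^2})^n$, and Poissonizes. You instead use the principal Dirichlet eigenfunction $\cos\frac{\pi x}{2M}$, which vanishes at $\pm M$ and so gives an exact martingale. In a few lines this yields the stronger bound $P_0(\tau_M>\gamma)\ge e^{-\lambda_1\gamma}\ge e^{-\pi^2\gamma/(4M^2)}$, without the factor $\frac12$. That part is correct as written; the unstopped exponential is harmless there because $\phi_1(x_\tau)=0$.
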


\begin{proof}
(1) Consider the embedded Markov chain $y=\{y_{n}\}_{n\in\mathbb{N}_{0}}$
on $\mathbb{Z}$ such that $x_{t}=y_{N(t)}$ where $\{N(t)\}_{t\ge0}$
is a Poisson process of intensity $2$ and independent of $y$. Denote
by $\widehat{P}_{\cdot}$ and $\widehat{E}_{\cdot}$ the law and the
corresponding expectation, respectively, of the embedded chain $y$.
Note that for any $x\in(-M,M)$, $\beta\in(0,\frac{\pi}{2M})$, and
$n\ge1$, by the Markov property,
\[
\widehat{E}_{x}[\cos(\beta y_{n})\,|\,\mathcal{F}_{n-1}]=\cos(\beta y_{n-1})\widehat{E}_{0}\cos(\beta y_{1})-\sin(\beta y_{n-1})\widehat{E}_{0}\sin(\beta y_{1})=\cos(\beta y_{n-1})\cos\beta.
\]
This implies that $\{\frac{\cos(\beta y_{n})}{\cos^{n}\beta}\}_{n\ge0}$
is a $\widehat{P}_{x}$-martingale. Thus, by the stopping time theorem
applied to $\tau_{M}\wedge K$ for any $K\in\mathbb{N}$,
\[
1\ge\cos(\beta x)=\widehat{E}_{x}\left[\frac{\cos(\beta y_{\tau_{M}\wedge K})}{\cos^{\tau_{M}\wedge K}\beta}\right]\ge\cos(\beta M)\widehat{E}_{x}\left[\frac{1}{\cos^{\tau_{M}\wedge K}\beta}\right],
\]
where the second inequality holds since $y_{\tau_{M}\wedge K}\in[-M,M]$
and $\beta\in(0,\frac{\pi}{2M})$. Sending $K\to\infty$, we get
\[
\widehat{E}_{x}\left[\frac{1}{\cos^{\tau_{M}}\beta}\right]\le\frac{1}{\cos(\beta M)}.
\]
Thus, for $n\in\mathbb{N}_{0}$,
\[
\widehat{P}_{x}(\tau_{M}>n)\le\cos^{n}\beta\widehat{E}_{x}\left[\frac{1}{\cos^{\tau_{M}}\beta}\right]\le\frac{\cos^{n}\beta}{\cos(\beta M)}.
\]
Using this inequality, we estimate as
\begin{equation}
P_{x}(\tau_{M}>\gamma)=\sum_{n=0}^{\infty}P(N(\gamma)=n)\widehat{P}_{x}(\tau_{M}>n)\le\sum_{n=0}^{\infty}e^{-2\gamma}\frac{(2\gamma)^{n}}{n!}\frac{\cos^{n}\beta}{\cos(\beta M)}=\frac{e^{-2\gamma(1-\cos\beta)}}{\cos(\beta M)}.\label{eq:SRW-1}
\end{equation}
Substituting $\cos\beta\le1-\frac{\beta^{2}}{\pi}$ and $\beta=\frac{\pi}{3M}$,
we conclude that
\[
\sup_{x\in(-M,M)}P_{x}(\tau_{M}>\gamma)\le\frac{e^{-2\gamma\pi^{-1}\beta^{2}}}{\cos(\beta M)}=2e^{-\frac{2\pi}{9}\frac{\gamma}{M^{2}}}.
\]
(2) According to the formula in \cite[p243, line -5]{Spi64},
\begin{align*}
\widehat{P}_{0}( & \tau_{M}>n)=\frac{1}{M}\sum_{j=0}^{M-1}(-1)^{j}\cos^{n}\frac{(2j+1)\pi}{2M}\cot\frac{(2j+1)\pi}{4M}\\
 & =\frac{1}{M}\sum_{0\le j\le\frac{M-1}{2}}(-1)^{j}\cos^{n}\frac{(2j+1)\pi}{2M}\left(\cot\frac{(2j+1)\pi}{4M}+(-1)^{n+M-1}\tan\frac{(2j+1)\pi}{4M}\right).
\end{align*}
Note that the second summation in $j$ is an alternating series with
decreasing absolute terms. Using $\cos x\ge1-\frac{x^{2}}{2}$ and
$\cot x\ge\frac{\pi}{4x}$ on $(0,\frac{\pi}{4})$, the first term
therein becomes
\[
\cos^{n}\frac{\pi}{2M}\left(\cot\frac{\pi}{4M}+(-1)^{n+M-1}\tan\frac{\pi}{4M}\right)\ge\left(1-\frac{\pi^{2}}{8M^{2}}\right)^{n}\left(M+\frac{(-1)^{n+M-1}}{M}\right).
\]
Using $\cos x\le1-\frac{x^{2}}{\pi}$ and $\cot x\le\frac{1}{x}$
on $(0,\frac{\pi}{2})$, the second term therein becomes
\[
\cos^{n}\frac{3\pi}{2M}\left(\cot\frac{3\pi}{4M}+(-1)^{n+M-1}\tan\frac{3\pi}{4M}\right)\le\left(1-\frac{9\pi}{4M^{2}}\right)^{n}\left(\frac{4M}{3\pi}+(-1)^{n+M-1}\frac{3\pi}{4M}\right).
\]
Combining these two estimates,
\[
\widehat{P}_{0}(\tau_{M}>n)\ge\frac{1}{2}\left(1-\frac{\pi^{2}}{8M^{2}}\right)^{n}.
\]
Substituting this bound to \eqref{eq:SRW-1}, we conclude that
\[
P_{0}(\tau_{M}>\gamma)\ge\sum_{n=0}^{\infty}e^{-2\gamma}\frac{(2\gamma)^{n}}{n!}\frac{1}{2}\left(1-\frac{\pi^{2}}{8M^{2}}\right)^{n}=\frac{1}{2}e^{-\frac{\pi^{2}\gamma}{4M^{2}}}.
\]
\end{proof}

\section{Large deviations bound for geometric random variables}
In this section, we record a basic large deviations bound for geometric random variables.
Suppose that $X_1, X_2, \dots$ are i.i.d. with geometric distribution of parameter $\rho \in (0,1)$, i.e.,
\[
P(X_i = m) = (1-\rho) \rho^m \qquad \text{for all} \quad m \ge 0.
\]
Note that $E[X_i] = \frac \rho{1-\rho}$. Let $S_n := X_1 + \cdots + X_n$. Then, via Markov's inequality, we estimate
\[
P \left( \frac{S_n}n \ge \frac\rho{1-\rho} + a \right) \le \lambda^{- n \left( \frac\rho{1-\rho} + a \right)} \, E \left[ \lambda^{ S_n } \right]
= \lambda^{- n \left( \frac\rho{1-\rho} + a \right)} \, E \left[ \lambda^{ X_1} \right]^n .
\]
Above, $\lambda >0$. We have for $\lambda\in(0,1/\rho)$,
\[
E \left[ \lambda^{ X_1} \right] = \sum_{m=0}^\infty \lambda^{ m} (1-\rho) \rho^m = \frac {1-\rho}{1- \lambda \rho} ,
\]
thus
\[
P \left( \frac{S_n}n \ge \frac\rho{1-\rho} + a \right) \le \left( \frac{1-\rho}{ \lambda^{\frac\rho{1-\rho} + a} (1- \lambda \rho)} \right)^n .
\]
By choosing $\lambda = \frac{1 + a(1-\rho)/\rho}{1 + a (1-\rho)} \in(0,1/\rho)$, we have $1-\lambda \rho = \frac{1-\rho}{1+a(1-\rho)}$ and, by Bernoulli's inequality, for $a>0$,
\begin{align*}
\lambda^{\frac\rho{1-\rho} + a} = \left( 1 + \frac{a(1-\rho)^2/\rho}{1+a(1-\rho)} \right)^{\frac\rho{1-\rho} + a}
& \ge 1 + \frac{a(1-\rho)^2/\rho}{1+a(1-\rho)} \left( \frac\rho{1-\rho} + a \right)
\\ & = 1 + a(1-\rho) + \frac{a^2 (1-\rho)^3/\rho}{1+a(1-\rho)}.
\end{align*}
Combining the last two inequalities and using $1-x \le e^{-x}$, we conclude that
\begin{align*}
P \left( \frac{S_n}n \ge \frac\rho{1-\rho} + a \right) & \le \left( \frac{1+a(1-\rho)}{ 1+a(1-\rho) + \frac{a^2 (1-\rho)^3/\rho}{1+a(1-\rho)} } \right)^n \\
& \le \exp \left( -n \, \frac{\frac{a^2 (1-\rho)^3/\rho}{1+a(1-\rho)}}{1+a(1-\rho) + \frac{a^2 (1-\rho)^3/\rho}{1+a(1-\rho)}} \right) .
\end{align*}
In particular, if $\rho \in (1/3,2/3)$ and $a \in (0,1)$, then
\begin{equation}\label{eq:LDP-geo}
P \left( \frac{S_n}n \ge \frac\rho{1-\rho} + a \right) \le \exp ( - C n a^2 )
\end{equation}
for a global constant $C>0$.


\begin{thebibliography}{1}

\bibitem{AC25}J. Ayre and P. Chleboun.
Mixing Times for the Facilitated Exclusion Process.
arXiv:2402.18999, 2024.

\bibitem{AGLS23} A. Ayyer, S. Goldstein, J. L. Lebowitz and E. R. Speer.
Stationary states of the one-dimensional facilitated asymmetric exclusion process.
Ann. Inst. Henri Poincar\'e Probab. Stat., 59(2):726--742, 2023.

\bibitem{BBCS18} J. Baik, G. Barraquand, I. Corwin and T. Suidan.
\textit{Facilitated exclusion process}. 
Computation and Combinatorics in Dynamics, Stochastics and Control, 1--35, 2018.

\bibitem{WAFEP-BBS} G. Barraquand, O. Blondel and M. Simon.
Weakly asymmetric facilitated exclusion process.
Electron. J. Probab., 30, No. 128, 2025.

%\bibitem{basu-mohanty} U. Basu and P. K. Mohanty, Active–absorbing-state phase transition beyond directed percolation: A class of exactly solvable models, Phys. Rev. E 79 (2009), 041143.

\bibitem{billingsley} P. Billingsley.
\textit{Probability and measure (3rd ed.)}.
Wiley Series in Probability and Mathematical Statistics. John Wiley \& Sons, New York, 1995.

\bibitem{BESS20} O. Blondel, C. Erignoux, M. Sasada and M. Simon.
Hydrodynamic limit for a facilitated exclusion process.
Ann. Inst. Henri Poincar\'e Probab. Stat., 56(1):667--714, 2020.

\bibitem{BES21} O. Blondel, C. Erignoux and M. Simon.
Stefan problem for a nonergodic facilitated exclusion process.
Probab. Math. Phys., 2(1):127--178, 2021.

\bibitem{DCES} H. Da Cunha, C. Erignoux and M. Simon.
Hydrodynamic limit for an open facilitated exclusion process with slow and fast boundaries. 
Comm. Math. Phys., 407(3), Paper No. 50, 2026. 

\bibitem{EM24} C. Erignoux and B. Massouli\'e.
Cutoff for the transience and mixing time of a SSEP with traps and consequences on the FEP.
Ann. Inst. Henri Poincar\'e Probab. Stat. (to appear), arXiv:2403.20010, 2024.

\bibitem{mappingESZ} C. Erignoux, M. Simon and L. Zhao.
Mapping hydrodynamics for the facilitated exclusion and zero-range processes. 
Ann. Appl. Probab., 34(1B):1524--1570, 2024. 

\bibitem{fluctuationsFEP} C. Erignoux and L. Zhao.
Stationary fluctuations for the facilitated exclusion process. 
Electron. J. Probab., 29, No. 152, 2024. 

\bibitem{GLS} 
S. Goldstein, J. L. Lebowitz and E. R. Speer.
Approach to hyperuniformity of steady states of facilitated exclusion processes.
Journal of Physics: Condensed Matter, 36(34): 345402, 2024.

\bibitem{HT25}
I. Hartarsky and C. Toninelli.
\textit{Kinetically constrained models}.
SpringCham, 2025.

\bibitem{RAS14} F. Rassoul-Agha and T. Sepp\"al\"ainen.
\textit{A Course on Large Deviations with an Introduction to Gibbs Measures}.
American Mathematical Society, 2014.

\bibitem{Robbins55} H. Robbins.
A Remark on Stirling's Formula.
The American Mathematical Monthly, 62(1):26–29, 1955.

\bibitem{RPSV} M. Rossi, R. Pastor-Satorras, and A. Vespignani.
Universality class of absorbing phase transitions with a conserved field.
Phys. Rev. Lett., 85(9):1803--1806, 2000.

\bibitem{Spi64} F. Spitzer.
\emph{Principles of Random Walk}.
Graduate Text in Mathematics, Springer New York, 1964.

\bibitem{golf} Z. Varin. {The golf model on $\mathbb{Z}/n\mathbb{Z}$ and on $\mathbb{Z}$}. Electron. J. Probab. 30: 1-58 (2025).
\end{thebibliography}
\end{document}